\newtheorem{theorem}{Theorem}
\newtheorem*{mainthm}{Main Theorem}
\newtheorem{lemma}[theorem]{Lemma}
\newtheorem{proposition}[theorem]{Proposition}
\newtheorem{prop}[theorem]{Proposition}
\newtheorem{definition}[theorem]{Definition}
\theoremstyle{definition}
\newtheorem{example}[theorem]{Example}
\theoremstyle{remark}
\newtheorem{remark}[theorem]{Remark}
\newcommand{\diag}{\mathrm{diag}}
\newcommand{\cyc}{\phi}
\newcommand{\Gr}{\mathrm{Gr}}
\newcommand{\Fl}{\mathrm{Fl}}
\newcommand{\AfGr}{\mathcal{G}r}
\newcommand{\QH}{\mathrm{QH}}
\newcommand{\HH}{\mathrm{H}}
\newcommand{\SL}{\mathrm{SL}}
\def \d {{\mathbf d}}
\def \t {{\mathbf t}}
\def \ZZ {{\mathbb Z}}
\def \C {\mathbb{C}}
\def \NN {\mathbb{N}}
\def \mGrass {S_n^m}
\def \jGrass {S_n^j}
\DeclareMathOperator{\D}{\textbf{D}}
\DeclareMathOperator{\Inv}{\mathrm Inv}
\edef\savecatcodeat{\the\catcode`@}
\def\tb@ifSpecChars#1#2{#1}
\def\tb@ifNoSpecChars#1#2{#2}
\def\tableau{%
  \bgroup
  \@ifstar{\let\Tif\tb@ifNoSpecChars\tb@tableauB}
          {\let\Tif\tb@ifSpecChars\tb@tableauB}}
\def\tb@tableauB{
  \@ifnextchar[{\tb@tableauC}{\tb@tableauC[]}}
\def\tb@tableauC[#1]{\hbox\bgroup%
    \let\\=\cr
    \def\bl{\global\let\tbcellF\tb@cellNF}%
    \def\tf{\global\let\tbcellF\tb@cellH}
%
    \dimen2=\ht\strutbox \advance\dimen2 by\dp\strutbox%
    \ifx\baselinestretch\undefined\relax%
    \else%
       \dimen0=100sp \dimen0=\baselinestretch\dimen0%
       \dimen2=100\dimen2 \divide\dimen2 by\dimen0%
    \fi%
    \let\tpos\tb@vcenter
    \tb@initYoung
    \tb@options#1\eoo
    \let\arrow\tb@arrow%
    \dimen0=\Tscale\dimen2%
    \dimen1=\dimen0 \advance\dimen1 by \tb@fframe%
    \lineskip=0pt\baselineskip=0pt
%
    \def\tb@nothing{}%
    \def\endcellno{$\rss\egroup\bss\egroup}
    \def\endcell{\endcellno\kern-\dimen0}
    \def\begincell{\vbox to\dimen0\bgroup\vss\hbox to\dimen0\bgroup\hss$}%
    \let\overlay\tb@overlay%
    \let\fl\tb@fl%
    \let\lss\hss\let\rss\hss\let\tss\vss\let\bss\vss
    \def\mkcell##1{
        \let\tbcellF\tb@cellD
        \def\tb@cellarg{##1}
        \ifx\tb@cellarg\tb@nothing\let\tb@cellarg\tb@cellE\fi%
        \begincell\tb@cellarg\endcellno
        \tbcellF}
    \let\savecellF\tbcellF
     \Tif{\catcode`,=4\catcode`|=\active}{}\tb@tableauD}%
\let\tb@savetableauD\tableauD
\gdef\tableauD#1{%
  \Tif{
    \mathcode`|="8000 \mathcode`*="8000%
    \mathcode`~="8000 \mathcode`@="8000%
    \def@{\bullet}%
    \let|\cr
    \let*\tf
    \let~\sk
  }{}%
  \tpos{\tabskip=0pt\halign{&\mkcell{##}\cr#1\crcr}}%
  \global\let\tbcellF\savecellF
  \egroup
  \egroup}
\let\tb@tableauD\tableauD
\let\tableauD\tb@savetableauD
\let\tb@savetableauD\undefined
\def\tb@options#1{\ifx#1\eoo\relax\else\tb@option#1\expandafter\tb@options\fi}
\def\tb@option#1{%
  \if#1t\let\tpos\tb@vtop\fi
  \if#1c\let\tpos\tb@vcenter\fi
  \if#1b\let\tpos\vbox\fi
  \if#1F\tb@initFerrers\fi
  \if#1Y\tb@initYoung\fi
  \if#1s\tb@initSmall\fi
  \if#1m\tb@initMedium\fi
  \if#1l\tb@initLarge\fi
  \if#1p\tb@initPartition\fi
  \if#1a\tb@initArrow\fi
}
\def\tb@vcenter#1{\ifmmode\vcenter{#1}\else$\vcenter{#1}$\fi}
\def\tb@vtop#1{\hbox{\raise\ht\strutbox\hbox{\lower\dimen0\vtop{#1}}}}
\def\tb@initPartition{\def\Tscale{.3}}
\def\tb@initSmall{\def\Tscale{1}}
\def\tb@initMedium{\def\Tscale{2}}
\def\tb@initLarge{\def\Tscale{3}}
\def\tb@initArrow{\dimen2=1.25em}
\def\tb@initYoung{%
  \def\tb@cellE{}
  \let\tb@cellD\tb@cellN
  \def\sk{\global\let\tbcellF\tb@cellNF}}
\def\tb@initFerrers{%
  \def\tb@cellE{\bullet}
  \let\tb@cellD\tb@cellNF
  \def\sk{\bullet}}
\def\tb@sframe#1{%
  \vbox to0pt{
    \vss
    \hbox to0pt{%
      \hss
      \vbox to\dimen1{
        \hrule depth #1 height0pt
        \vss
        \hbox to\dimen1{
          \vrule width #1 height\dimen1
          \hss
          \vrule width #1
          }%
        \vss
        \hrule height #1 depth 0in
        }%
      \kern-\tb@hframe
      }%
    \kern-\tb@hframe}}
\def\tb@hframe{.2pt}\def\tb@fframe{.4pt}\def\tb@bframe{2pt}
\def\tb@cellH{\tb@sframe{\tb@bframe}}       
\def\tb@cellNF{}                            
\def\tb@cellN{\tb@sframe{\tb@fframe}}       
\let\tbcellF\tb@cellN                       
\def\tb@rpad{1pt}
\def\tb@lpad{1pt}
\def\tb@tpad{1.8pt}
\def\tb@bpad{1.8pt}
\def\tb@overlay{\endcell\@ifnextchar[{\tb@overlaya}{\begincell}}
\def\tb@overlaya[#1]{\vbox to\dimen0\bgroup%
  \tb@overlayoptions#1\eoo%
  \tss\hbox to\dimen0\bgroup\lss$}
\def\tb@overlayoptions#1{\ifx#1\eoo\relax\else\tb@overlayoption#1\expandafter\tb@overlayoptions\fi}
\def\tb@overlayoption#1{
  \if#1t\def\tss{\vskip\tb@tpad}\let\bss\vss\fi
  \if#1c\let\tss\vss\let\bss\vss\fi
  \if#1b\def\bss{\vskip\tb@bpad}\let\tss\vss\fi
  \if#1l\def\lss{\hskip\tb@lpad}\let\rss\hss\fi
  \if#1m\let\lss\hss\let\rss\hss\fi
  \if#1r\def\rss{\hskip\tb@rpad}\let\lss\hss\fi
}
\def\tb@fl{\endcell\begincell\vrule depth 0pt width \dimen0 height \dimen0 \endcell\begincell}
\def\tb@arrowpad{.5}
\newoptcommand{\tb@arrow}{\@ne}[2]{%
  \endcell
   \begingroup%
   \let\dg@getnodesize\tb@getnodesize
   \dg@USERSIZE=#1\relax%
   \ifnum\dg@USERSIZE<\@ne \dg@USERSIZE=\@ne \fi%
   \dg@parse{#2}%
   \dg@label{\tb@draw{#1}{#2}}}
\def\tb@getnodesize#1#2#3#4#5{\dimen3=\tb@arrowpad\dimen2 #4=\dimen3 #5=\dimen3\relax}
\def\tb@getnodesize#1#2#3#4#5{\ifnum#2=0\ifnum#3=0\tb@getnodesizetail{#4}{#5}\else\tb@getnodesizehead{#4}{#5}\fi\else\tb@getnodesizehead{#4}{#5}\fi}
\def\tb@getnodesizetail#1#2{\dimen3=.5\dimen2 #1=\dimen3 #2=\dimen3}
\def\tb@getnodesizehead#1#2{\dimen3=.5\dimen2 #1=\dimen3 #2=\dimen3}
\def\tb@draw#1#2#3#4{%
        \dg@X=0\dg@Y=0\dg@XGRID=1\dg@YGRID=1\unitlength=.001\dimen0%
        \dg@LBLOFF=\dgLABELOFFSET \divide\dg@LBLOFF\unitlength%
        \dg@drawcalc
        \begincell
        \let\lams@arrow\tb@lams@arrow
        \begin{picture}(0,0)\begingroup\dg@draw{#1}{#2}{#3}{#4}\end{picture}%
        \endcell
        \endgroup
        \begincell}
\def\tb@lams@arrow#1#2{%
 \lams@firstx\z@\lams@firsty\z@
 \lams@lastx#1\relax\lams@lasty#2\relax
 \lams@center\z@
 %
 \N@false\E@false\H@false\V@false
 \ifdim\lams@lastx>\z@\E@true\fi
 \ifdim\lams@lastx=\z@\V@true\fi
 \ifdim\lams@lasty>\z@\N@true\fi
 \ifdim\lams@lasty=\z@\H@true\fi
 \NESW@false
 \ifN@\ifE@\NESW@true\fi\else\ifE@\else\NESW@true\fi\fi
 %
 \ifH@\else\ifV@\else
  \lams@slope
  \ifnum\lams@tani>\lams@tanii
   \lams@ht\ten@\p@\lams@wd\ten@\p@
   \multiply\lams@wd\lams@tanii\divide\lams@wd\lams@tani
  \else
   \lams@wd\ten@\p@\lams@ht\ten@\p@
   \divide\lams@ht\lams@tanii\multiply\lams@ht\lams@tani
  \fi
 \fi\fi
 %
 \ifH@  \lams@harrow
 \else\ifV@ \lams@varrow
 \else \lams@darrow
 \fi\fi
}
\let\savecatcodeat\undefined
\definecolor{amethyst}{rgb}{0.6, 0.4, 0.8}
\definecolor{kellygreen}{rgb}{0.3, 0.73, 0.09}
\definecolor{americanrose}{rgb}{1.0, 0.01, 0.24}
\newcommand{\red}[1]{\textcolor{red}{#1}}
\newcommand{\blue}[1]{\textcolor{blue}{#1}}
\newcommand{\FlT}{\Gamma_{\operatorname{T}}}
\newcommand{\PC}{\Psi_{\operatorname{PC}}}
\newcommand{\SD}{\Gamma_{\operatorname{SD}}}
\newcommand{\LJGr}{\Phi_{\operatorname{Gr}}}
\newcommand{\LJFl}{\Phi_{\operatorname{Fl}}}
\begin{document}

\title[An affine approach to Peterson comparison]{An affine approach to Peterson comparison}

\author{Linda Chen}
\address{Department of Mathematics and Statistics, Swarthmore College, Swarthmore, PA 19081}
\email{lchen@swarthmore.edu}
\author{Elizabeth Mili\'cevi\'c}
\address{Department of Mathematics and Statistics, Haverford Colllege, Haverford, PA 19041}
\email{emilicevic@haverford.edu}
\author{Jennifer Morse}
\address{Department of Mathematics, University of Virginia, Charlottesville, VA 22904}
\email{morsej@virginia.edu}


\thanks{LC was partially supported by Simons Collaboration Grant 524354. EM was partially supported by NSF Grant DMS-1600982 and Simons Collaboration Grant 318716. JM was partially supported by NSF Grants DMS-1855804 and DMS-1833333.}

\begin{abstract}
The Peterson comparison formula proved by Woodward relates the three-pointed Gromov-Witten invariants for the quantum cohomology of partial flag varieties to those for the complete flag.  Another such comparison can be obtained by composing a combinatorial version of the Peterson isomorphism with a result of Lapointe and Morse relating quantum Littlewood-Richardson coefficients for the Grassmannian to $k$-Schur analogs in the homology of the affine Grassmannian obtained by adding rim hooks.  We show that these comparisons on quantum cohomology are equivalent, up to Postnikov's strange duality isomorphism.
\end{abstract}

\maketitle


\section{Introduction}\label{sec:intro}
The study of quantum cohomology emerged from physics, and connections to enumerative geometry drew attention to the mathematical ideas being employed by the superstring theorists \cite{Witten}.  The axiomatic and functorial development of Gromov-Witten theory \cite{KontManin,GivKim} then paved the way for algebraic geometers to reformulate these problems from the moduli space perspective \cite{FP}. The resulting combinatorial pursuit of quantum Schubert calculus, which is the topic of the present paper, aims to explicitly describe the product structure in the (small) quantum cohomology ring $\QH^*(G/P)$, where $G$ is a connected complex reductive group, and $P$ is a parabolic subgroup containing a fixed Borel $B$.

\subsection{Comparing quantum cohomology rings}

Unlike classical cohomology, quantum cohomology is not functorial.  More precisely, the natural projection $G/B \rightarrow G/P$ does not give rise to a map on quantum cohomology $\QH^*(G/P) \rightarrow \QH^*(G/B)$, as it does in the classical case.  Nevertheless, scattered throughout the literature are several methods for relating the structure constants  for these two rings, in an effort to partially restore functoriality.  The structure constants are given by three-pointed Gromov-Witten invariants, or quantum Littlewood-Richardson coefficients. One such result is the Peterson comparison formula \cite{Pet}, which was proved by Woodward \cite{w} using the geometry of principal bundles.

Another celebrated result of Peterson \cite{Pet}, proved by Lam and Shimozono \cite{ls}, equates the studies of the quantum cohomology of the homogeneous space $G/P$ and the homology of the affine Grassmannian $\AfGr_G$. In \cite{LamJAMS}, Lam showed that the Schubert basis for the ring $\HH_*(\AfGr_{\SL_n})$ is represented by the $k$-Schur functions of  \cite{LMktab}. Combining this quantum-to-affine correspondence of Peterson with results of Lapointe and Morse from \cite{lm} then yields a method for using $k$-Littlewood-Richardson coefficients to compare the quantum cohomology rings for complete and partial flag varieties in type $A$.  

The goal of this paper is to explain the precise relationship between these different means for comparing products in $\QH^*(G/P)$ and $\QH^*(G/B)$, in the special case where $G=\SL_n$ and $G/P$ is the Grassmannian.  A critical component of this connection arises from an unexpected symmetry on $\QH^*(G/P)$ discovered by Postnikov for the Grassmannian \cite{p1}, and generalized to other (co)miniscule homogeneous spaces by Chaput, Manivel, and Perrin \cite{cmp07}. The Main Theorem provides an affine approach to the Peterson comparison formula via this strange duality isomorphism.

\subsection{Statement of the Main Theorem}

The Schubert basis for the quantum cohomology $\QH^*(\Gr_{m,n})$ is indexed by partition shapes contained in a fixed rectangle, whereas the Schubert basis for $\QH^*(\Fl_n)$ is indexed by permutations.  The primary goal of quantum Schubert calculus is to provide combinatorial formulas for the quantum Littlewood-Richardson coefficients describing the product of two Schubert classes; see Sections \ref{sec:GrGWs} and \ref{sec:FlGWs} for more details.

The Peterson comparison formula \cite{w} relates the quantum Littlewood-Richardson coefficients for $G/P$ to those for $G/B$, where $G$ is any connected, simply connected, semisimple complex reductive group.  We specialize this comparison in Section \ref{sec:PC} as $\PC$, which expresses every quantum Littlewood-Richardson coefficient for $\QH^*(\Gr_{m,n})$ in terms of certain ones for $\QH^*(\Fl_n)$.  This statement is formalized in Theorem \ref{thm:comparison}, and several related ingredients appeared in the work of Leung and Li on graded filtrations of subspaces in $\QH^*(\Fl_n)$; see \cite{LLiJDG, LLiIMRN}.
To keep our paper self-contained, we provide an independent proof of Theorem \ref{thm:comparison}  in Section \ref{sec:comparison}.

For $G=\SL_n$, the affine Grassmannian is $\AfGr_n = G(\C((t))/G(\C[[t]])$.  
The Schubert basis for the homology $\HH_*(\AfGr_n)$ is represented by $k$-Schur functions, which are indexed by $k$-bounded partitions. A primary goal of affine Schubert calculus is to provide combinatorial formulas for the $k$-Littlewood-Richardson coefficients, which determine the product on $k$-Schur functions. The Peterson isomorphism implies that all quantum Littlewood-Richardson coefficients are encoded as $k$-Littlewood-Richardson coefficients, and so this affine Schubert problem strictly contains the quantum one; see Section \ref{sec:affLRs} for more details.

In \cite{lm}, Lapointe and Morse show that the quantum Littlewood-Richardson coefficients for $\QH^*(\Gr_{m,n})$ are equal to those $k$-Littlewood Richardson coefficients obtained by adding rim hooks to the corresponding $k$-bounded partition. We review this result as $\LJGr$ in Theorem \ref{thm:LucJenGr} from Section \ref{sec:LucJenGr}. Directly from the Peterson isomorphism, every product in $\QH^*(\Fl_n)$ can be expressed in many ways using the $k$-Littlewood-Richardson coefficients, one of which is reviewed in Theorem \ref{thm:lstodarephrase}, rephrased from \cite{lstoda} following the treatment in \cite{BMPSadvances}. Importantly for our purposes, this correspondence from Theorem \ref{thm:lstodarephrase} is invertible on the image of $\LJGr$. We make this claim precise in Section \ref{sec:AffComp}, the primary goal of which is to prove Theorem \ref{thm:LJFlcomb} formalizing $\LJFl$.

The Main Theorem describes the precise connection between the Peterson comparison formula $\PC$ and the composition of the quantum-to-affine correspondences $\LJFl \circ \LJGr$, both of which provide a means for directly relating the rings $\QH^*(\Gr_{m,n})$ and $\QH^*(\Fl_n)$ in the absence of functoriality.  Though not identical, these comparisons differ by exactly two duality isomorphisms, one of which is the standard flag transpose $\FlT$ reviewed in Section \ref{sec:FlT}.  The more subtle discrepancy arises from the strange duality isomorphism $\SD$ on $\QH^*(\Gr_{m,n})$ from \cite{p1}, which we review in Section \ref{sec:SD} in the special case recorded as Theorem \ref{thm:SDPost}.  These various comparisons $\QH^*(\Gr_{m,n}) \rightarrow \QH^*(\Fl_n)$ are related as follows.

\begin{mainthm}
For any $m,n,r\in \NN$ such that $m+r=n$, the following diagram on Littlewood-Richardson coefficients commutes: 
\begin{center}
\begin{tikzcd}[row sep=tiny]
& \QH^*(\Gr_{m,n}) \arrow[r,"\PC"] & \QH^*(\Fl_n) \arrow[dd, leftrightarrow, "\FlT"]  \\
\QH^*(\Gr_{m,n}) \arrow[ur, leftrightarrow, "\SD"] \arrow[dr,"\LJGr"'] & &  \\
& \HH_*(\AfGr_n) \arrow[r, rightarrow, "\LJFl"] & \QH^*(\Fl_n)
\end{tikzcd}
\end{center}
e.g.\ for any partitions $\lambda, \mu, \nu \subseteq (r)^m$ and any $d \in \ZZ_{\geq0}$ such that $|\lambda| + |\mu| = |\nu| + nd$, given the quantum Littlewood-Richardson coefficient $c_{\lambda, \mu}^{\nu,d}$ in $\QH^*(\Gr_{m,n})$, we have
\[ \FlT \circ \PC \circ \SD \left( c_{\lambda, \mu}^{\nu, d}\right) = \LJFl \circ \LJGr  \left( c_{\lambda, \mu}^{\nu, d}\right). \]
\end{mainthm}

The Main Theorem is made precise in Section \ref{sec:background}, which provides self-contained statements for each of the five correspondences that appear in the diagram above, illustrated by a common running example. The two relationships original to this paper are $\PC$ which is the subject of Section \ref{sec:comparison}, and $\LJFl$ which is formalized in Section \ref{sec:AffComp}. The proof of the Main Theorem is largely combinatorial in nature and follows in Section \ref{sec:proof}.

\subsection{Discussion of related and future work}

We conclude by highlighting several similar results which equate certain quantum and/or affine Littlewood-Richardson coefficients, discussing their relationship to our Main Theorem, and mentioning some related open problems.

The quantum cohomology ring $\QH^*(\Gr_{m,n})$ has been well-studied, 
with Pieri and Giambelli formulas established using geometric techniques of Bertram  \cite{Bertram}, 
an elementary linear-algebraic approach of Buch \cite{Buch}, and various other formulations which followed.
However, there is still much work to be done towards understanding the combinatorics of more general
 Littlewood-Richardson coefficients; even the classical coefficients for $\HH^*(\Fl_n)$ remain elusive.
It would be interesting to attack this problem with the connection to $k$-Littlewood-Richardson coefficients via the correspondence $\LJFl$.
A natural starting point would be to capitalize on the well-developed case of $\QH^*(\Gr_{m,n})$. Formulas for these quantum Littlewood-Richardson coefficients could be
traced through $\LJGr$, and the outcome compared to combinatorics supporting 
$k$-Schur functions.  
The correspondence $\LJGr$ identifies these quantum Littlewood-Richardson coefficients with the $k$-Littlewood-Richardson 
coefficients arising in the $k$-Schur expansion of a product of ordinary Schur functions.  
From there, similar comparisons with known special cases for $\QH^*(\Fl_{n})$ to those obtained from $\LJFl$ 
and its inverse could be made.

In \cite{BCFF},  Bertram, Ciocan-Fontanine, and Fulton give a rim hook algorithm for computing  quantum Littlewood-Richardson coefficients for $\QH^*(\Gr_{m,n})$ in terms of (signed) classical Littlewood-Richardson coefficients. The rim hooks occurring in the correspondence $\LJGr$ are a special case of those in \cite{BCFF}, though the comparison $\LJGr$ is a quantum-to-affine one, rather than quantum-to-classical.  That is, the correspondence $\LJGr$ expresses the quantum Littlewood-Richardson coefficients for $\QH^*(\Gr_{m,n})$  in terms of certain (positive) $k$-Littlewood-Richardson coefficients.  As shown in Theorem 1.1 of \cite{CM}, the rim hooks in $\LJGr$ correspond precisely to translation elements which are localized in the parabolic Peterson isomorphism comparing $\HH_*(\AfGr_n) $ to $\QH^*(\Gr_{m,n})$, suggesting an approach to generalizing $\LJGr$ to other partial flag varieties.

In \cite{BKT2step},  Buch, Kresch, and Tamvakis identify  the quantum Littlewood-Richardson coefficient  $c_{\lambda, \mu}^{\nu,d}$ for $\QH^*(\Gr_{m,n})$ with a classical Littlewood-Richardson coefficient for the two-step flag variety whose Schubert classes are indexed by permutations with descents in positions $m\pm d$.  In \cite[Section 2]{LLiIMRN}, Leung and Li show that this ``quantum-to-classical" formula of  \cite{BKT2step} can be recovered from the Peterson comparison formula. Although the third permutation indexing the quantum Littlewood-Richardson coefficient in $\QH^*(\Fl_n)$ under the image of $\PC$  as formulated in Theorem \ref{thm:comparison} also has descents in positions $m \pm d$, this does not give a direct relation between $\PC$ and the comparison to two-step flag varieties in \cite{BKT2step} and \cite{LLiIMRN}.  In particular, quantum Littlewood-Richardson coefficients for $\QH^*(\Gr_{m,n})$ of nonzero degree always compare via $\PC$ to quantum Littlewood-Richardson coefficients in $\QH^*(\Fl_n)$ with nonzero degree,  and conversely,  the Peterson comparison formula applied to the classical two-step Littlewood-Richardson coefficients arising in \cite{BKT2step} gives degree zero Littlewood-Richardson coefficients in $\QH^*(\Fl_n)$.

Lam and Shimozono observe in Proposition 11.10 of \cite{ls} that at $q=1$, the correspondence $\LJGr$ of Lapointe and Morse is the composition of the parabolic Peterson isomorphism and Postnikov's strange duality $\SD$.  Another perspective on the Main Theorem is that it completes this story for arbitrary $q$, using the fact that the relation $\LJFl$ is invertible. It would be natural to explore analogs of these relationships in other types, where one would expect the strange duality of Chaput, Manivel, and Perrin \cite{cmp07} to be the critical link.  In type $A$, the Main Theorem also suggests a roadmap for possible generalizations of $\SD$ to partial flag varieties beyond the Grassmannian, for which there is no known analog of strange duality.  An alternative path to exploring generalizations of the Main Theorem would be to work on the level of Schubert classes following Cookmeyer and Mili\'cevi\'c, using the parabolic Peterson isomorphism directly as in Theorem 1.2 of \cite{CM}.

\subsection*{Acknowledgements}

 EM gratefully acknowledges the support of the Max-Planck-Institut f\"ur Mathematik, which hosted two long-term sabbatical visits in 2016 and 2020, during which significant portions of this this work were completed. The authors wish to thank the anonymous referee for pointing out several additional references.


\section{Background on Quantum and Affine Schubert calculus}
\label{sec:background}

The purpose of this section is to formally state each of the five different equalities of quantum and/or affine Littlewood Richardson coefficients appearing in the Main Theorem, and to develop the combinatorial background for the corresponding quantum and affine Schubert calculus; see references such as \cite{Fulton,FGP,Buch,kSchur} for more details. Each correspondence is illustrated by a common running example, and the relationships among these comparisons as stated in the Main Theorem is then demonstrated at the end of the section. Throughout the paper, we fix integers $m,n,r \in \NN$ such that $m+r=n$ and define $k=n-1$.

\subsection{Quantum Littlewood-Richardson coefficients for the Grassmannian}\label{sec:GrGWs}

The Grassmannian of $m$-dimensional subspaces of $\C^n$ shall be denoted by $\Gr_{m,n}$. The cohomology ring $\HH^*(\Gr_{m,n})$ has a basis of Schubert classes, indexed by \emph{partitions}  $\lambda=(r\geq \lambda_1\geq\dots\geq\lambda_m\geq 0)$.  A partition $\lambda$ is typically represented in French notation as a {\it Ferrers shape} with $\lambda_i$ boxes or cells in the $i^{\text{th}}$ row, 
where the indexing is inherited from the embedding of $\lambda$ in the $\NN \times \NN$ plane;  
row 1 is the lowest row and row $m$ is the highest.
The  number of rows of the partition is denoted by $l(\lambda)=m$. We use this correspondence between partitions and Ferrers shapes without comment.  Any two partitions can be added coordinate-wise, inserting trailing zeros as necessary in order that the number of parts is equal.

The Schubert basis for $\HH^*(\Gr_{m,n})$ is then indexed by partitions which lie inside the 
rectangle $R_r:=(r^{m})$ having $r$ columns and $m$ rows;
we use $r^m$ to represent $m$ copies of a row of size $r$ throughout.
For $\lambda\subseteq R_r$, the corresponding Schubert class $\sigma_\lambda\in \HH^{2| \lambda|}(\Gr_{m,n})$, where $| \lambda|$ is the total number of boxes in $\lambda$. When $\lambda\subseteq R_r$, its {\it complement} is the partition
\begin{equation}\label{eq:complement}
\lambda^{\vee}=(r-\lambda_{m},\ldots,r-\lambda_{1})\,.
\end{equation}
If there is any ambiguity about the underlying rectangle containing $\lambda \subseteq R_r$, we use 
the notation $\lambda^{\vee_{r}}$.
The Schubert classes $\sigma_{\lambda^\vee}$ form a Poincar\'e dual basis, meaning
$\sigma_\mu \cdot \sigma_{\lambda^\vee}  = \delta_{\mu,\lambda}$
in $\HH^*(\Gr_{m,n})$.

The \emph{quantum cohomology ring}  $\QH^*(\Gr_{m,n})$ is a commutative and associative graded algebra over $\ZZ[q]$, where $q$ is a parameter of degree $n$.  As a $\ZZ[q]$-module, the quantum cohomology $\QH^*(\Gr_{m,n}) := \ZZ[q]\otimes \HH^*(\Gr_{m,n})$, and thus also has a basis of Schubert classes indexed by partitions $\lambda \subseteq R_r$, which we again denote by $\sigma_\lambda$ since the context should always be clear; i.e.\ 
\[
  \QH^*(\Gr_{m,n}) = \bigoplus_{\lambda \subseteq R_r} \ZZ[q]\sigma_\lambda.
\]
The  quantum product $*$ is a deformation of the classical product $\cdot$ in $\HH^*(\Gr_{m,n})$.  Given partitions $\lambda,\mu\subseteq R_r$,  the \emph{quantum Littlewood-Richardson coefficients} $c_{\lambda,\mu}^{\nu,d}$ for the Grassmannian are defined by
\begin{equation*}
  \sigma_\lambda * \sigma_\mu = \sum_{\nu,d}c_{\lambda,\mu}^{\nu,d}\, q^d\, \sigma_\nu,
\end{equation*}
where the sum ranges over $\nu \subseteq R_r$ and $d\in \ZZ_{\geq 0}$. 
By degree considerations, $c_{\lambda,\mu}^{\nu,d}=0$ unless $|\lambda|+|\mu|=|\nu|+nd$.
Note that the quantum Littlewood-Richardson coefficient $c_{\lambda,\mu}^{\nu,d}$ equals the three-point Gromov-Witten invariant $\langle \sigma_\lambda,\sigma_\mu,\sigma_{\nu^\vee}\rangle_d$.

\subsubsection{Strange duality for the Grassmannian}\label{sec:SD}

There is a \emph{strange duality} isomorphism on the quantum cohomology ring, first proved by Postnikov for Grassmannians \cite{p1}, and then generalized by Chaput, Manivel, and Perrin to all (co)miniscule homogeous spaces \cite{cmp07}. This duality inverts the quantum parameter $q$, and thus curiously places rational curves on $\Gr_{m,n}$ of high degree in bijection with those of low degree.  We review a version of this strange duality isomorphism $\SD$ from \cite{p1} in Theorem \ref{thm:SDPost} below.

To state this result, one required statistic on partitions is the length of the the main diagonal $\diag_0(\lambda)$, which denotes the number of boxes of $\lambda$ with equal row and column index. 
In addition, each partition $\lambda \subseteq R_{r}$ can be uniquely identified with a {\it bit string} $b_\lambda \in \{0,1\}^n$ having $m$ zeros and $r$ ones.  To construct $b_\lambda$, trace the boundary of the shape $\lambda$, starting from the upper left corner of $R_{r}$, recording each vertical step as 0 and each horizontal step as 1; each 0 and 1 in the resulting string is referred to as a {\it bit}.  We also use this bijective correspondence between shapes and bit strings freely without comment.

For any integer $1 \leq a \leq n$, define the {\it cycling map} $\cyc^a$ to act on the bit string $b_\lambda$ by cycling the first $a$ bits of $b_\lambda$ to the end of the string. Note that $\cyc^n$ is the identity map, and that the inverse of the map $\cyc^a$ is given by $\cyc^{-a} = \cyc^{n-a}$.

\begin{theorem}\cite[Corollary 6.8]{p1}\label{thm:SDPost}
\label{thm:strangeduality}
For any partitions $\lambda,\mu,\nu \subseteq R_r$ and any integer $d \in \ZZ_{\geq0}$, we have 
\begin{equation*}
c_{\lambda,\mu}^{\nu,d} \xlongequal{\SD} c_{\lambda^\vee, \mu^\vee}^{\cyc^{r}(\nu)^\vee,t},
\end{equation*}
where $t = \diag_0(\nu^\vee) - d$.
\end{theorem}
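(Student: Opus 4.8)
The plan is to convert the asserted identity into a single symmetry of three–point Gromov--Witten invariants of $\Gr_{m,n}$ and to realize that symmetry inside Postnikov's cylindric (toric Schur polynomial) model. First I would use $c_{\lambda,\mu}^{\nu,d}=\langle\sigma_\lambda,\sigma_\mu,\sigma_{\nu^\vee}\rangle_d$ from Section~\ref{sec:GrGWs}, together with the fact that $\vee$ is an involution, to rewrite the claim as
\[
\langle\sigma_\lambda,\sigma_\mu,\sigma_{\nu^\vee}\rangle_d=\langle\sigma_{\lambda^\vee},\sigma_{\mu^\vee},\sigma_{\cyc^r(\nu)}\rangle_t,\qquad t=\diag_0(\nu^\vee)-d .
\]
So it suffices to produce one symmetry of the symmetric three–point invariants carrying the datum $(\lambda,\mu,\nu^\vee;d)$ to $(\lambda^\vee,\mu^\vee,\cyc^r(\nu);t)$.

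Next I would pass to bit strings, using the two elementary facts that run the bookkeeping: complementation is reversal, $b_{\zeta^\vee}=\mathrm{rev}(b_\zeta)$, and reversal conjugates cycling to inverse cycling, $\mathrm{rev}\circ\cyc^a=\cyc^{-a}\circ\mathrm{rev}$. With these, the operation above is (first two arguments) reversal and (third argument) reversal followed by $\cyc^r$, i.e.\ an element of the dihedral symmetry group of the $n$-bead necklace, equivalently of the cylinder $\ZZ^2/\ZZ(-m,r)$ on which Postnikov models $\langle\sigma_\lambda,\sigma_\mu,\sigma_\eta\rangle_d$ as a count of cylindric semistandard tableaux. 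The heart of the argument, which I would take from \cite{p1}, is that complementation of a cylindric shape is again a cylindric shape after a flip and a rotation, so this dihedral move yields a weight– and content–preserving bijection between the families of cylindric tableaux enumerated by the two sides; applying it gives the equality of coefficients, with the degree on each side read off as the number of times the relevant cylindric shape wraps around the cylinder.

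What then remains is to pin down the exponent, i.e.\ to confirm that the two winding numbers differ by exactly $\diag_0(\nu^\vee)$. By the dimension constraint for nonzero Gromov--Witten invariants this reduces to the purely numerical identity $|\nu^\vee|+|\cyc^r(\nu)|=n\,\diag_0(\nu^\vee)$, which I would derive from the area formula $|\cyc^r(\nu)|=|\nu|+r^2-pn$, where $p$ is the number of $1$'s among the first $r$ bits of $b_\nu$, together with the combinatorial lemma that $\diag_0(\mu)$ equals the number of $0$'s among the last $r$ bits of $b_\mu$ for any $\mu\subseteq R_r$; the lemma is a short monotone-lattice-path (abacus) computation, and one also checks $t\ge 0$ via the degree bound $d\le\diag_0(\nu^\vee)$ for nonzero $c_{\lambda,\mu}^{\nu,d}$. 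I expect the main obstacle to be the middle step: having a symmetry of the Gromov--Witten invariants robust enough to include the reflection of the cylinder and not merely its rotations, and then correctly identifying the induced change of winding with the statistic $\diag_0(\nu^\vee)$ rather than a naive area difference. A viable alternative to the cylindric model would be to route through the rim-hook rule of \cite{BCFF}, rewriting both sides as signed classical Littlewood--Richardson coefficients after removing $n$-rim hooks and matching the two rim-hook configurations by a reflection, with the leg-length signs of the rim hooks then playing the role of the degree shift.
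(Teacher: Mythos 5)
Your auxiliary computations are all correct --- the area change $|\cyc^r(\nu)|=|\nu|+r^2-pn$ under cycling, the Durfee-square reading of $\diag_0$ from the bit string, and hence the identity $|\nu^\vee|+|\cyc^r(\nu)|=n\,\diag_0(\nu^\vee)$ --- but your route is much heavier than the paper's and, at its core, no more independent of \cite{p1}. The paper's proof is a two-line translation: Corollary 6.8 of \cite{p1} already asserts $c_{\lambda,\mu}^{\nu,d}=c_{\lambda^\vee,\mu^\vee}^{\cyc^{m}(\nu^\vee),\,t}$ with the same $t=\diag_0(\nu^\vee)-d$, so the only thing left to check is $\cyc^{r}(\nu)^\vee=\cyc^{m}(\nu^\vee)$, which follows from exactly the two bit-string facts you also use (complementation is reversal, and reversal conjugates $\cyc^{a}$ to $\cyc^{-a}=\cyc^{n-a}$). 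Your degree bookkeeping therefore re-derives information the cited corollary already carries: the dimension constraint only pins down which degree can be nonvanishing and does not by itself give equality of coefficients. For the equality you defer to \cite{p1}, and there your phrasing needs care on two points. First, in Postnikov's model the invariant $\langle\sigma_\lambda,\sigma_\mu,\sigma_{\nu^\vee}\rangle_d$ appears as a coefficient in the Schur expansion of a toric Schur polynomial, not as a direct count of cylindric tableaux, so a ``weight- and content-preserving bijection between the tableaux enumerated by the two sides'' is pitched at the wrong level: such a bijection would compare the toric Schur polynomials, after which one must still extract coefficients, minding that complementation reverses content in finitely many variables. Second, only the rotation half of your dihedral move is a formal symmetry of the cylindric model; the reflection half is precisely the strange duality being proved, so ``taking it from \cite{p1}'' leaves you with the same dependence as the paper's citation, just with more scaffolding and more places to slip. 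In short, your plan is viable as a reconstruction, and its self-contained numerics check out, but if citing \cite{p1} is permitted the bit-string translation alone suffices (as in the paper), and if it is not, you still owe a proof of the reflection symmetry itself; the same caveat applies to the suggested \cite{BCFF} rim-hook alternative, where the leg-length signs make the proposed matching delicate.
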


\begin{proof} For any partition $\eta \subseteq R_{r}$, the bit string $b_\eta=b_1b_2\cdots b_n$ relates to its complement by reversing the bits $b_{\eta^\vee}=b_n\cdots b_2b_1$.  Therefore, the bit strings of $\cyc^r(\nu)^\vee$ and $\cyc^{-r}(\nu^\vee)=\cyc^{m}(\nu^\vee)$ coincide, and the result now follows immediately from the equality  $c_{\lambda,\mu}^{\nu,d} = c_{\lambda^\vee, \mu^\vee}^{\cyc^{m}(\nu^\vee),t}$ in Corollary 6.8 of \cite{p1}.
\end{proof}

We now consider an example which we will use to illustrate each of the theorems in this section, as well as their relationship as stated in the Main Theorem.

\begin{example}\label{ex:SDfav}
Let $n=5$ and $r=2$, and consider the partitions $\lambda=(2,2,1)$ and $\mu = (1,1,0)$. Using the quantum Pieri formula \cite{Bertram}, we can calculate the following product in $\QH^*(\Gr_{3,5})$:
\[ \sigma_{\tableau[pcY]{|,|,}} * \sigma_{\tableau[pcY]{||}} = q\sigma_{\tableau[pcY]{||}} + q\sigma_{\tableau[pcY]{,}}. \]
For the partition $\nu = (2,0,0) \subset R_2$ indexing the second summand and $d=1$, we see that $c_{\lambda, \mu}^{\nu,d} = 1$.
To apply $\SD$, we compute that $b_\nu = 00110$ so that $\cyc^2(\nu) = 11000 \leftrightarrow (2,2,2)$.  Therefore $\cyc^2(\nu)^\vee = (0,0,0)$ is the empty shape.  In addition, $\nu^\vee = (2,2,0)$ so that $\diag_0(\nu^\vee) = 2$ and $d' = 2-1=1$.  Finally, $\lambda^\vee = (1,0,0)$ and $\mu^\vee = (2,1,1)$.  Theorem \ref{thm:SDPost} then says that 
\[ c_{\,\tableau[pcY]{|,|,}\, , \, \tableau[pcY]{||}}^{\, \tableau[pcY]{,}\, , \,1} \xlongequal{\SD} c_{\tableau[pcY]{|}\ , \ \tableau[pcY]{||,}}^{\emptyset\, , \, 1}. \]
\end{example}

\subsection{Quantum Littlewood-Richardson coefficients for the flag variety}\label{sec:FlGWs}

The complete flag variety for $\C^n$ shall be denoted by $\Fl_n$.  The cohomology ring $\HH^*(\Fl_n)$ has a basis of Schubert classes indexed by elements of the symmetric group $S_n$, which are permutations on the set $[n]:=\{1,\dots,n\}$.  The {\it one-line} or \emph{window} notation for a permutation records the action of $w \in S_n$ on the elements of $[n]$ as $w = [w_1 \cdots w_n],$ where $w_i=w(i)$. Given $w \in S_n$, the corresponding Schubert class $\sigma_w\in \HH^{2\ell(w)}(\Fl_n)$, where the \emph{length} of $w\in S_n$ is the number of inversions 
\begin{equation*}
  \ell(w) = \#\{ i<j \mid w(i)>w(j) \}.
\end{equation*}
There is a unique element of greatest length in $S_n$;  this longest element is denoted by $w_0$ and  is defined as a permutation by $w_0(i) = n+1-i$ for all $i \in [n]$.
The Schubert classes $\sigma_{w_0 w}$ form a Poincar\'{e} dual basis, meaning $\sigma_v\cdot \sigma_{w_0 w} =\delta_{v,w}$ in $\HH^*(\Fl_n)$. 

The \emph{quantum cohomology ring} $\QH^*(\Fl_n)$ is a commutative and associative graded algebra over $\ZZ[{\mathbf q}]:=\ZZ[q_1,\ldots,q_{k}]$, where each $q_i$ is a parameter of degree $2$.  As a $\ZZ[{\mathbf q}]$-module, the quantum cohomology $\QH^* (\Fl_n):= \ZZ[{\mathbf q}]\otimes \HH^*(\Fl_n)$, and thus also has a basis of Schubert classes, which we again denote by $\sigma_w$.  Given permutations $u,v\in S_n$,  the (generalized) \emph{quantum Littlewood-Richardson coefficients} $c_{u,v}^{w,d}$ for the flag variety are defined by
\begin{equation*}
  \sigma_u * \sigma_v = \sum_{w,\d} c_{u,v}^{w,\d}\, {\mathbf q}^\d\, \sigma_w,
\end{equation*}
where the sum ranges over $w \in S_n$ and $\d = (d_1, \dots, d_{k}) \in \ZZ^{k}_{\geq 0}$, and we denote by ${\mathbf q}^{\d} = q_1^{d_1} \cdots q_{k}^{d_{k}}$. 
By degree considerations, $c_{u,v}^{w,\d}=0$ unless $\ell(u)+\ell(v)=\ell(w)+2|\d|$, where $|\d| = \sum_{i=1}^{k} d_i$.
Note that the quantum Littlewood-Richardson coefficient $c_{u,v}^{w,\d}$ equals the three-point Gromov-Witten invariant $\langle  \sigma_u,\sigma_v,\sigma_{w_0w}\rangle_\d$.

\subsubsection{The Peterson comparison formula}\label{sec:PC}

The Peterson comparison formula stated by Peterson \cite{Pet} and proved by Woodward \cite{w} relates the quantum Littlewood-Richardson coefficients for the homogeneous space $G/P$ to those of $G/B$, where $G$ is any connected, simply connected, semisimple complex reductive group. In Theorem \ref{thm:PetComp}, we specialize the Peterson comparison formula for $G=\SL_n$ to equate the quantum Littlewood-Richardson coefficients for the Grassmannian to certain ones for the complete flag variety via $\PC$. 

Every partition $\lambda\subseteq R_r$ can be uniquely identified with a permutation $w_\lambda\in S_n$ defined by  
\begin{equation}\label{eq:lampermdef}
w_\lambda(i)=\lambda_{m-i+1}+i
\end{equation}
 for $1\leq i\leq m$, and then ordering the remaining values $w_\lambda(m+1)<\cdots<w_\lambda(n)$.  
A permutation $w\in S_n$ has a {\it descent at $i$}  if $w(i)>w(i+1)$, 
and $w$ is called a \emph{Grassmann permutation} if it has at most one descent. 
The map $\lambda\mapsto w_\lambda$ gives a bijection between partitions in $R_r$ 
and the set of Grassmann permutations with a descent at $m$, which we denote by $\mGrass$
(and the empty partition maps to the identity permutation).
The inverse map is defined by $w \mapsto \lambda_w$, where
\begin{equation}\label{eq:permlamdef}
\lambda_w= (w(m)-m,\ldots,w(1)-1)\subseteq R_r.
\end{equation}
The subgroup $S_m \times S_r$ is the Weyl group $W_P$ for the maximal parabolic subgroup $P$ such that $\Gr_{m,n} \cong \SL_n(\C)/P$, and the longest element of this subgroup is 
\begin{equation}\label{eq:w0perm}
w_0^{P} = [ m \cdots  1 \mid n \cdots m+1].
\end{equation}
Note that Poincar\'e duality in $\QH^*(\Gr_{m,n})$ can also be realized via the relation 
\begin{equation}\label{eq:dual-perm}
w_{\lambda^\vee}=w_0w_\lambda w_0^P.
\end{equation}

We specialize the Peterson comparison formula from \cite{w} to compare the quantum Littlewood-Richardson coefficients for $\QH^*(\Gr_{m,n})$ and $\QH^*(\Fl_n)$ in Theorem \ref{thm:PetComp}; the proof of Theorem \ref{thm:PetComp} follows in Section \ref{sec:comparison}.  We remark that the key combinatorial identities in the proof of Theorem \ref{thm:comparison} appeared in the work of Leung and Li, albeit using somewhat different language; see \cite[Lemma 3.6]{LLiJDG} and \cite[Section 2.3]{LLiIMRN}.

\begin{theorem} 
\label{thm:comparison}\label{thm:PetComp}
Let $u,v,w \in \mGrass$, and fix any integer $0 \leq d \leq \min \{r, m\}$.  Then
\begin{equation*}
c_{\lambda_u,\lambda_v}^{\lambda_w,\,d} \xlongequal{\PC} c_{u,v}^{ww_0^Pw_0^{P'_d},{\d}} 
\end{equation*}
where 
\begin{align}
\d & = (0^{m-d}, 1, 2, \dots, d-1, d, d-1, \dots, 2, 1, 0^{r-d}) \in \ZZ_{\geq 0}^{k}, \quad \text{and} \label{eq:dPC} \\
w_0^{P'_d} & = [ m-d \cdots 1 \mid m  \cdots m-d+1 \mid m+d \cdots m+1 \mid n \cdots m+d+1]. \label{eq:w0P'}
 \end{align}
\end{theorem}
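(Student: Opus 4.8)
The plan is to deduce Theorem \ref{thm:PetComp} directly from Woodward's Peterson comparison formula by specializing the general statement to $G=\SL_n$ and the maximal parabolic $P$ with $\SL_n/P \cong \Gr_{m,n}$, and then carefully tracking the combinatorial bookkeeping that converts the abstract degree datum and Weyl group elements appearing in Woodward's theorem into the explicit window-notation permutation $ww_0^Pw_0^{P'_d}$ and the explicit composition vector $\d$ in \eqref{eq:dPC}. The first step is to recall the precise form of the Peterson comparison formula: it identifies a Gromov–Witten invariant $\langle \sigma_{\lambda_u},\sigma_{\lambda_v},\sigma_{\lambda_w^{\vee}}\rangle_d^{G/P}$ with a Gromov–Witten invariant on $G/B$ for a suitably shifted curve class, where the shift is governed by the element $w_0^{P}w_0^{P'}$ for a certain parabolic $P'$ determined by which simple coroots enter the degree $d$. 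The content is to show that, in our type $A$ maximal-parabolic situation, the only data is the single nonnegative integer $d$, and that the parabolic $P'$ is the one whose longest element is given by \eqref{eq:w0P'}.

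Concretely, I would proceed as follows. First, translate Poincaré duality on both sides: using \eqref{eq:dual-perm} to rewrite the Grassmannian invariant $c_{\lambda_u,\lambda_v}^{\lambda_w,d}$ as a three-point invariant $\langle \sigma_u,\sigma_v,\sigma_{w_0w w_0^P}\rangle_d$, and similarly expanding the flag side. Second, identify the curve class: a degree $d\in\ZZ_{\ge 0}$ on $\Gr_{m,n}$ corresponds to $d$ times the coroot $\alpha_m^{\vee}$ of the unique simple root not in $W_P$, and under the Peterson comparison this pushes forward to the class $\d$ on $\Fl_n$; the claim is precisely the palindromic vector in \eqref{eq:dPC}, which one recognizes as the coefficient vector of $d\varpi_m^{\vee}$ (the $m$-th fundamental coweight, scaled) expressed in the basis of simple coroots of $\SL_n$, truncated by the constraint $0\le d\le\min\{r,m\}$ that keeps all entries achievable. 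Third, identify the parabolic $P'$: Woodward's formula has the third index shifted by right multiplication by $w_0^{P}w_0^{P'}$, where $P'$ is the parabolic generated by those simple reflections $s_i$ with $i$ "close enough" to $m$ to be reached within $d$ steps; the window notation \eqref{eq:w0P'} is exactly the longest element of $S_{m-d}\times S_d\times S_d\times S_r$ (in the obvious positions), and I would verify this by checking that $w_0^{P'_d}$ inverts precisely the blocks $\{1,\dots,m-d\}$, $\{m-d+1,\dots,m\}$, $\{m+1,\dots,m+d\}$, $\{m+d+1,\dots,n\}$. Fourth, assemble: substitute these identifications into the comparison formula and read off the stated equality, checking the degree constraint $\ell(u)+\ell(v)=\ell(w)+2|\d|$ is the image of $|\lambda_u|+|\lambda_v|=|\lambda_w|+nd$ under $|\d| = \sum_i d_i = d^2 + \binom{d}{2}\cdot 0\cdots$, i.e. that the length shift $2|\d| = nd$ matches, which pins down that $|\d|$ must equal $nd/2$ — a useful sanity check on the vector $\d$.

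The main obstacle I expect is the precise matching of conventions between Woodward's formulation (which is stated for general $G/P \to G/B$ in terms of coroot lattice data and a somewhat implicit parabolic $P'$) and the completely explicit window-notation statement here: one has to be scrupulous about French-vs-English conventions, the choice of which descent position indexes the Grassmannian (here $m$, via \eqref{eq:lampermdef}), the sign/direction of the cycling and the Poincaré dualities \eqref{eq:complement} and \eqref{eq:dual-perm}, and above all the bookkeeping that identifies the "amount of room" on each side of position $m$ (namely $m$ rows and $r=n-m$ columns) with the truncation $d\le\min\{r,m\}$ and with the block structure of $w_0^{P'_d}$. A clean way to sidestep some of this, which I would pursue if the direct substitution becomes unwieldy, is to instead give the self-contained proof promised in the introduction: verify the identity on the quantum Pieri generators using Bertram's quantum Pieri rule on the Grassmannian side and the Fomin–Gelfand–Postnikov quantum Pieri rule on $\Fl_n$, and then invoke associativity together with the fact that the $\sigma_{\lambda}$ with $\lambda\subseteq R_r$ generate $\QH^*(\Gr_{m,n})$ over $\ZZ[q]$; this reduces everything to a single, explicitly checkable palindromic-vector computation for $\d$ and a direct length count for $w_0^{P'_d}$, at the cost of a longer but entirely elementary argument. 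I would present the direct specialization of Woodward's theorem as the primary proof and remark that the self-contained Pieri-based verification is carried out in Section \ref{sec:comparison}.
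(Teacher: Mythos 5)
Your overall strategy --- specialize Woodward's comparison formula to $G=\SL_n$ with the maximal parabolic $\Delta_P=\Delta\setminus\{\alpha_m\}$ --- is exactly the route the paper takes, but the proposal is missing the one step that actually constitutes the proof, and the shortcuts you substitute for it are incorrect. Woodward's theorem (as in Theorem \ref{T:Woodward}) does not hand you $\d_B$ and $P'$ by inspection: it characterizes $\gamma_{\d_B}$ as the \emph{unique} lift of the degree $d\,\alpha_m^\vee \bmod Q_P^\vee$ satisfying $\langle \gamma_{\d_B},\alpha\rangle\in\{0,-1\}$ for all $\alpha\in R_P^+$, and then defines $\Delta_{P'}=\{\alpha\in\Delta_P \mid \langle\gamma_{\d_B},\alpha\rangle=0\}$. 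The heart of the argument is therefore the explicit Cartan-matrix computation showing that the palindromic vector in \eqref{eq:dPC} pairs to $2$ with $\alpha_m$, to $-1$ with $\alpha_{m\pm d}$, and to $0$ with every other simple root, hence to $\{0,-1\}$ with every $\alpha\in R_P^+$ (this is Lemma \ref{T:InnerProdCalc} and Proposition \ref{T:WoodwardGr} in the paper). Your proposal never states this criterion, let alone verifies it; instead you assert that $\d_B$ is ``the coefficient vector of $d\varpi_m^\vee$ in the simple coroots,'' which is false --- those coefficients are $d\,i(n-m)/n$ and $d\,m(n-i)/n$, generically not integers and not the vector \eqref{eq:dPC}; the correct $\gamma_{\d_B}$ is $\sum_{i=1}^{d}\bigl(\alpha^\vee_{m-d+i}+\cdots+\alpha^\vee_{m+d-i}\bigr)$, which is pinned down only by the $\{0,-1\}$ condition.

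Your description of $P'$ is likewise wrong: it is not ``generated by the $s_i$ with $i$ reached within $d$ steps of $m$'' (that would suggest $\Delta_{P'}$ is governed by the support of $\gamma_{\d_B}$), but rather $\Delta_{P'_d}=\Delta\setminus\{\alpha_{m-d},\alpha_m,\alpha_{m+d}\}$, i.e.\ it contains simple roots both inside and far outside that support, and its Weyl group is $S_{m-d}\times S_d\times S_d\times S_{r-d}$ (not $S_r$ in the last factor, though your later block listing is the right one). Finally, the ``sanity check'' is garbled: $|\d|=d^2$, not $nd/2$, and the degree constraint on the flag side involves $\ell(ww_0^Pw_0^{P'_d})$, not $\ell(w)$; the bookkeeping works out because $\ell(w_0^P)-\ell(w_0^{P'_d})=nd-2d^2$, but as written your check would fail. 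The fallback you sketch (quantum Pieri on both sides plus associativity) is not carried out and is not the paper's argument; as it stands, the proposal identifies the correct theorem to specialize but omits and misstates the computation that the specialization requires.
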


\begin{example}\label{ex:PCfav}
To illustrate the statement of the Main Theorem, we continue by applying Theorem \ref{thm:PetComp} to the result of Example \ref{ex:SDfav} illustrating $\SD$.  We thus have $\lambda_u = (1,0,0)$ and $\lambda_v = (2,1,1) \subset R_2$.  Applying \eqref{eq:lampermdef}, we recover the permutations $u = [ 12435]$ and $v = [23514]$, each of which has a single descent in position $n-r = 3$. The empty shape $\lambda_w = (0,0,0)$
produces the identity permutation $w = [12345]$. 

Specializing formula \eqref{eq:w0perm} for the longest element of $S_3 \times S_2$, we have $w_0^P = [ 32154]$, and applying formula \eqref{eq:w0P'} for $w_0^{P'_d}$ with $d=1$ gives us $w_0^{P'_d}=[21345]$. Composing, we obtain $ww_0^Pw_0^{P'_d} = [23154]$. Finally, we have $\d = (0^{3-1}, 1, 0^{2-1}) = (0,0,1,0)$ by Equation \eqref{eq:dPC}.  Therefore, Theorem \ref{thm:PetComp} says that 
\[ c_{\tableau[pcY]{|}\ , \ \tableau[pcY]{||,}}^{\emptyset, 1}  \xlongequal{\PC}  c_{[12435]\, ,\, [23514]}^{[23154]\, ,\, (0,0,1,0)}.\]
\end{example}

\subsubsection{Permutations and the transpose map}\label{sec:FlT}

The natural map $\Fl_n\rightarrow\Fl_n$ which identifies  an $r$-dimensional subspace of $V=\C^n$ with  an $m$-dimensional subspace of $V^*$  induces an isomorphism on $\QH^*(\Fl_n)$, which acts on Schubert classes as conjugation by $w_0$. We denote this induced map by
\begin{align*}
\FlT : \QH^*(\Fl_n) & \rightarrow \QH^*(\Fl_n)
\\
\sigma_w & \mapsto \sigma_{w'}, \nonumber
\end{align*}
where for a permutation $w=[w_1 \cdots w_n] \in S_n$ in one-line notation, the conjugate permutation 
$w'=w_0ww_0$ is given by $w'(n+1-i)=n+1-w(i)$ for all $i \in [n]$.   We then have the following equality of quantum Littlewood-Richardson coefficients
\begin{equation}\label{eq:FlT}
c_{u,v}^{w,\d} \xlongequal{\FlT} c_{u',v'}^{w',\d'},
\end{equation}
where the vector $\d' \in \ZZ_{\geq 0}^{k}$ is defined by $d'_i=d_{n-i}$ for $1\leq i\leq k$.  We refer to an application of $\FlT$ as the \emph{flag transpose}.

The notation $\FlT$ is derived from the fact that the corresponding map on $\QH^*(\Gr_{m,n})$ acts on a Schubert class by $\sigma_\lambda \mapsto \sigma_{\lambda '}$, where $\lambda'$ is the \emph{transpose} obtained by exchanging the rows and columns of $\lambda$. Note that
\begin{equation}\label{eq:conjugate}
 w_{\lambda'}= w_0w_\lambda w_0=w'_\lambda.
\end{equation}

\begin{example}\label{ex:FlTfav}
Consider $u = [12435], v = [23514], w = [23154] \in S_5$ and $\d = (0,0,1,0)$ from Example \ref{ex:PCfav} illustrating $\PC$.  Applying the flag transpose on $\QH^*(\Fl_5)$ and recalling that $\pi'(n+1-i)=n+1-\pi(i)$ for any $\pi \in S_n$ gives us 
\[ c_{[12435]\, ,\, [23514]}^{[23154]\, ,\, (0,0,1,0)}  \xlongequal{\FlT}   c_{[13245]\, ,\, [25134]}^{[21534]\, ,\, (0,1,0,0)}.\]
\end{example}

We now collect some facts  relating the transpose to the bijection between partitions and Grassmann permutations that we will use in Section \ref{sec:proof}.

\begin{lemma} \label{le:comtra}
For $\lambda=(\lambda_1,\dots,\lambda_j)\subseteq R_{n-j}$, write  $\lambda'=(\lambda'_1,\dots,\lambda'_{n-j})\subseteq R_j$ for the transpose partition to $\lambda$, 
and consider the transpose dual partition $(\lambda')^{\vee_j}=(\lambda^{\vee_{n-j}})' \subseteq R_j$. Then 
\begin{align*}
w_\lambda&= [\lambda_j+1\cdots\lambda_{1}+j\mid j+1-\lambda'_1\cdots n-\lambda'_{n-j}],\\
w_{({\lambda^{\vee_{n-j}})'}}&=
[j+1-\lambda'_1\ddots n-\lambda'_{n-j}\mid\lambda_j+1\cdots\lambda_{1}+j].
\end{align*}
\end{lemma}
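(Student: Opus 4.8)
The plan is to verify the two window-notation formulas directly from the defining bijection \eqref{eq:lampermdef} between partitions and Grassmann permutations, using the fact (stated in the excerpt) that complementation reverses bit strings and transposition reflects a Ferrers shape across the diagonal. First I would record the window notation for $w_\lambda$ when $\lambda \subseteq R_{n-j}$ has $j$ rows. By \eqref{eq:lampermdef}, $w_\lambda(i) = \lambda_{j-i+1} + i$ for $1 \le i \le j$, which after reindexing gives exactly the first $j$ entries $\lambda_j+1, \lambda_{j-1}+2, \dots, \lambda_1 + j$ of the claimed formula. For the last $n-j$ entries, I would use that $w_\lambda$ is the minimal-length coset representative, so these are the elements of $[n]$ not appearing among the first $j$ entries, listed in increasing order; the content here is to check that this increasing sequence is precisely $j+1-\lambda'_1, j+2-\lambda'_2, \dots, n-\lambda'_{n-j}$. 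This is the standard ``complementary partition'' bookkeeping: the positions of the vertical steps in the lattice path encoding $\lambda$ are governed by $\lambda$, and the positions of the horizontal steps are governed by the transpose $\lambda'$. I would make this precise by comparing the bit string $b_\lambda \in \{0,1\}^n$ to the window notation, noting that the $1$-bit positions read off $\{w_\lambda(i)\}_{i\le j}$ and the $0$-bit positions read off $\{w_\lambda(i)\}_{i > j}$, and that transposing the shape swaps the roles of rows and columns.

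For the second formula, I would apply the first formula to the partition $(\lambda^{\vee_{n-j}})' \subseteq R_j$, which has $n-j$ rows, so the roles of $j$ and $n-j$ are interchanged. The key simplification is the identity $(\lambda')^{\vee_j} = (\lambda^{\vee_{n-j}})'$ asserted in the lemma statement, which itself follows from \eqref{eq:complement} combined with the observation that transpose and complement (within the ambient rectangle) commute up to swapping the rectangle's dimensions — concretely, reversing a bit string and then reading it backwards, versus reading complement then transpose, produce the same shape. Applying the first formula to this $(n-j)$-row partition, the ``row parts'' of $(\lambda')^{\vee_j}$ are the numbers $j - \lambda'_{n-j}, \dots, j - \lambda'_1$ (in the appropriate order, which is why the claimed formula uses the descending-dots notation $\ddots$ for an increasing run read off in that order), and its transpose parts are the $\lambda_i$; substituting into the first formula and carefully tracking the $+1$ and $+$(row index) offsets yields precisely the asserted window notation for $w_{(\lambda^{\vee_{n-j}})'}$. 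Relation \eqref{eq:conjugate}, together with \eqref{eq:dual-perm}, can be used as an alternative cross-check: $w_{(\lambda^{\vee_{n-j}})'} = (w_{\lambda^{\vee_{n-j}}})' = w_0 (w_0 w_\lambda w_0^P) w_0$, and one can confirm the two windows agree under this composition.

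The main obstacle I anticipate is purely bookkeeping: keeping the indexing conventions consistent, since the paper uses French notation with row $1$ at the bottom, partitions are listed in weakly decreasing order, and the ``remaining values'' in \eqref{eq:lampermdef} must be sorted increasingly — so the transpose parts $\lambda'_1, \dots, \lambda'_{n-j}$ appear in the window in an order that looks reversed relative to how one first writes them down. The cleanest way to avoid sign/offset errors is to pass entirely through the bit string $b_\lambda$: the statement ``$w_\lambda(i) = $ (position of the $i$-th $0$ from the right, or $1$ from some convention)'' makes both the complement (reverse the string) and the transpose (reverse and flip $0 \leftrightarrow 1$, equivalently reflect the path) completely transparent, and then the two displayed formulas become a direct translation. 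I expect no genuine difficulty beyond this careful translation, and I would present the argument by first proving the bit-string characterization of $w_\lambda$ as a sub-claim and then deducing both lines of the lemma from it.
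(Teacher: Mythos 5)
Your plan is correct, but it takes a genuinely different route from the paper's proof. You propose to establish the \emph{full} window of $w_\lambda$ combinatorially: the first $j$ entries straight from \eqref{eq:lampermdef}, and the last $n-j$ entries by identifying the increasing complement of $\{\lambda_{j-i+1}+i\}$ with $\{j+c-\lambda'_c\}$ via the bit string $b_\lambda$; you then get the second displayed formula by applying this completed first formula to the shape $(\lambda')^{\vee_j}=(\lambda^{\vee_{n-j}})'\subseteq R_j$ with the roles of $j$ and $n-j$ interchanged. The paper avoids ever computing the second block of either permutation directly: it records only the first $j$ entries of $w_\lambda$ and only the first $n-j$ entries of $w_{(\lambda')^{\vee_j}}$ from \eqref{eq:lampermdef}, and then uses the algebraic identity $w_{(\lambda^{\vee_{n-j}})'}=w_0 w_{\lambda^{\vee_{n-j}}}w_0=w_\lambda w_0^{P_j}w_0$ (your ``cross-check,'' built from \eqref{eq:conjugate} and \eqref{eq:dual-perm}), whose right multiplication by $w_0^{P_j}w_0=[j+1\cdots n\mid 1\cdots j]$ swaps the two blocks of the window; matching the known blocks then determines the unknown ones simultaneously, with no lattice-path argument needed. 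Your version is more elementary and self-contained (it does not rely on \eqref{eq:dual-perm}), at the cost of carrying out the complement-set bookkeeping that the paper's block-swap trick sidesteps; the paper's version is shorter but leans on the Poincar\'e-duality and conjugation identities. Two small points to fix in execution: your parenthetical convention is reversed --- the $0$-bits (vertical steps) of $b_\lambda$ sit at the positions $w_\lambda(i)$ for $i\le j$ and the $1$-bits give the remaining values, as you yourself flag later --- and the transpose of $(\lambda')^{\vee_j}$ is $\lambda^{\vee_{n-j}}$, whose parts are $n-j-\lambda_{j-c+1}$ rather than ``the $\lambda_i$''; the cancellation producing $\lambda_{j-c+1}+c$ only appears after substituting into the formula, which is precisely the offset-tracking you promise to do.
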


\begin{proof}
Recall from \eqref{eq:lampermdef} that for $\lambda\subseteq R_{n-j}$, the first $j$ entries of the one-line notation of its corresponding permutation $w_\lambda= [ w_1 \cdots w_{j} \mid w_{j+1}\cdots w_n]\in \jGrass$ are
\begin{equation}\label{eq:wlambda-partial}
w_\lambda =[w_1\cdots w_j\mid\cdots]= [\lambda_j+1\dots\lambda_{1}+j\mid \cdots].\end{equation}
Consider  $(\lambda^{\vee_{n-j}})'$, the dual to the transpose of the partition  $\lambda$. Combining  \eqref{eq:conjugate} with  \eqref{eq:dual-perm} applied to the parabolic subgroup $P_j$ such that $\Gr_{j,n} \cong \SL_n(\C)/P_j$, we have
$$w_{(\lambda^{\vee_{n-j}})'} = w_0 w_{\lambda^{\vee_{n-j}}}w_0 = w_0(w_0w_\lambda w_0^{P_j} )w_0 = w_\lambda  w_0^{P_j}w_0, $$
where $w_0^{P_j} =[j \cdots  1 \mid n \cdots j+1]$ so that $w_0^{P_j}w_0=[j+1\cdots n \mid 1\cdots j]$, and therefore
\begin{equation} \label{eq:wdualtranspose}
w_{(\lambda^{\vee_{n-j}})'} = [w_{j+1}\cdots w_n\mid w_1 \cdots w_{j}].
\end{equation}
The complement to $\lambda'$ in $R_j$ is given by
$({\lambda'})^{\vee_j} = (j-\lambda'_{n-j},\dots,j-\lambda'_1)\subseteq R_j$,
so by  \eqref{eq:lampermdef},  we obtain the first $n-j$ entries of the one-line notation
of its corresponding permutation
\begin{equation} \label{eq:wdualtranspose-partial} w_{({\lambda'})^{\vee_j}} =[ (j-\lambda'_1)+1\cdots (j-\lambda'_{n-j})+(n-j) \mid \cdots]
 =[w_{j+1}\cdots w_n\mid\cdots].\end{equation}
Since $w_{({\lambda'})^{\vee_j}} = w_{({\lambda^{\vee_{n-j}})'}}$, comparing  \eqref{eq:wdualtranspose}  with the values of $w_1,\ldots,w_n$ identified in \eqref{eq:wlambda-partial} and \eqref{eq:wdualtranspose-partial} gives the result.
\end{proof}

\subsection{Affine Littlewood-Richardson coefficients}\label{sec:affLRs}

For $G=\SL_n$, the \emph{affine Grassmannian} is defined as $\AfGr_n = G(\C((t))/G(\C[[t]])$. The homology ring  $\HH_*(\AfGr_n)$ has a basis of affine Schubert classes, indexed by the set $\mathcal{P}^k$ of {\it $k$-bounded partitions} consisting of those partitions having parts no larger than $k$.  Let $\Lambda$ denote the ring of symmetric functions over $\ZZ$ in the variables $(x) =(x_1, x_2, \dots)$, and denote by $h_i(x)$ the \emph{homogeneous symmetric function} of degree $i$. By \cite{Bott}, there is an isomorphism $\HH_*(\AfGr_n) \cong \Lambda_n$, where $\Lambda_n$ denotes the subring of $\Lambda$ generated by $h_i(x)$ for $0 \leq i \leq k$.  Theorem 7.1 of \cite{LamJAMS} then says that the Schubert classes are represented by the {\it $k$-Schur functions} of \cite{LMktab} under Bott's isomorphism.

Given a $k$-bounded partition $\lambda \in \mathcal P^k$, the corresponding $k$-Schur function is denoted by $s_{\lambda}^{(k)}$.  For $\lambda, \mu \in \mathcal P^k$, the $k$-\emph{Littlewood-Richardson coefficients}  $C_{\lambda, \mu}^{\eta,(k)}$, also called
\emph{affine Littlewood-Richardson coefficients}, are defined by 
\begin{equation*}
\label{klr}
s_\lambda^{(k)}\,s_\mu^{(k)} = 
\sum_{\eta}
C_{\lambda, \mu}^{\eta,(k)}
\,
s_{\eta}^{(k)}
\,,
\end{equation*}
where the sum ranges over $\eta \in\mathcal P^{k}$. Note that $C_{\lambda, \mu}^{\eta, (k)} = 0$ unless $|\mu|+|\lambda|=|\eta|$. 
Whenever $\ell(\lambda)+\lambda_1\leq n$, the $k$-Schur function $s_\lambda^{(k)}$ 
equals the usual Schur function $s_\lambda$ by~\cite[Prop.~39]{LMktab}, and so
 $C_{\lambda, \mu}^{\eta,(k)}$ equals the classical Littlewood-Richardson coefficient $c_{\lambda, \mu}^{\eta}$ 
in this case.

\subsubsection{Quantum-to-affine correspondence for the Grassmannian}\label{sec:LucJenGr}

The Peterson isomorphism implies that all quantum Littlewood-Richardson coefficients arise as affine ones \cite{ls}. As such, the $k$-Schur functions provide a powerful tool for quantum Schubert calculus. In Theorem \ref{thm:LucJenGr}, we review the correspondence $\LJGr$ of Lapointe and Morse which specifies precisely which quantum and affine Littlewood-Richardson coefficients coincide in the case of the Grassmannian.

Given partitions $\mu\subseteq \lambda$, the {\it skew shape} $\lambda/\mu$ is the set of cells which are in $\lambda$, but not in $\mu$.  A skew shape is {\it connected} provided that any cells which share a vertex also share a full edge. 
An {\it $n$-rim hook} is a connected skew shape
which contains $n$ cells, but does not contain any $2 \times 2$ subdiagrams.  
The {\it head} of an $n$-rim hook is its southeasternmost cell.
Given $\lambda \subseteq R_r$ and any $d \in \NN$, we define $\lambda \oplus d$ to be the partition 
in $\mathcal P^k$ obtained by adding $d$ different $n$-rim hooks to $\lambda$ so that all $d$ heads lie in column $r$.

\begin{theorem}\cite[Theorem 18]{lm}\label{thm:LucJenGr}
For any partitions $\lambda, \mu, \nu \subseteq R_r$ and any $d \in \ZZ_{\geq0}$ such that $|\lambda| + |\mu| = |\nu| + nd$, we have
\begin{equation*}
c_{\lambda,\mu}^{\nu,d}\xlongequal{\LJGr}C_{\lambda, \mu}^{\nu\oplus d, (k)}.
\end{equation*}
\end{theorem}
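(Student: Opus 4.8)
The plan is to transport both quantities into the ring $\Lambda$ of symmetric functions and match them there, using on the quantum side the rim-hook presentation of $\QH^*(\Gr_{m,n})$ due to Bertram--Ciocan-Fontanine--Fulton \cite{BCFF}, and on the affine side the defining $k$-Schur expansion. First I would dispatch the affine side: since $\lambda,\mu\subseteq R_r$ we have $\ell(\lambda)+\lambda_1\le m+r=n$ and similarly for $\mu$, so by \cite[Prop.~39]{LMktab} the functions $s_\lambda^{(k)}$ and $s_\mu^{(k)}$ are honest Schur functions $s_\lambda,s_\mu\in\Lambda$. Consequently $C_{\lambda,\mu}^{\eta,(k)}$ is exactly the coefficient of $s_\eta^{(k)}$ in the $k$-Schur expansion of the ordinary product $s_\lambda\cdot s_\mu=\sum_\kappa c_{\lambda,\mu}^{\kappa}\,s_\kappa$, where $c_{\lambda,\mu}^{\kappa}$ denotes a classical Littlewood-Richardson coefficient.

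On the quantum side, the rim-hook presentation of \cite{BCFF} can be phrased as a surjective ring homomorphism $\pi\colon \Lambda\otimes_\ZZ\ZZ[q]\twoheadrightarrow\QH^*(\Gr_{m,n})$ with $s_\nu\mapsto\sigma_\nu$ for every $\nu\subseteq R_r$, under which the image of an arbitrary $s_\kappa$ is computed by the rim-hook algorithm: one reduces $\kappa$ to its $n$-core by successively removing $n$-rim hooks, each removal contributing a factor $q$ and a sign $(-1)^{(\text{rows spanned})-1}$; the result is the corresponding signed power of $q$ times $\sigma_{n\text{-core}}$ if the $n$-core lies inside $R_r$, and $0$ otherwise. (No partition inside $R_r$ contains an $n$-rim hook, since an $n$-cell border strip inside $m$ rows must span at least $n-m+1=r+1$ columns, so every $\nu\subseteq R_r$ is its own $n$-core.) Applying $\pi$ to $s_\lambda\cdot s_\mu=\sigma_\lambda*\sigma_\mu$ therefore recovers the quantum Littlewood-Richardson coefficients $c_{\lambda,\mu}^{\nu,d}$ as signed sums of the $c_{\lambda,\mu}^{\kappa}$.

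The theorem now reduces to the key claim that $\pi\bigl(s_\eta^{(k)}\bigr)=q^{d}\sigma_\nu$ whenever $\eta=\nu\oplus d$ for a (necessarily unique) $\nu\subseteq R_r$ and $d\in\ZZ_{\ge0}$, and $\pi\bigl(s_\eta^{(k)}\bigr)=0$ otherwise; uniqueness of $(\nu,d)$ holds because $\nu$ is the $n$-core of $\nu\oplus d$ and $d=(|\nu\oplus d|-|\nu|)/n$. Granting the claim, one expands $s_\lambda\cdot s_\mu=\sum_\eta C_{\lambda,\mu}^{\eta,(k)}s_\eta^{(k)}$, applies $\pi$, and reads off the coefficient of $q^{d}\sigma_\nu$ on both sides of $\sigma_\lambda*\sigma_\mu=\sum_{\nu,d}c_{\lambda,\mu}^{\nu,d}q^{d}\sigma_\nu$ to obtain $c_{\lambda,\mu}^{\nu,d}=C_{\lambda,\mu}^{\nu\oplus d,(k)}$. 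To prove the claim I would induct on $d$. The base case $d=0$ is $\pi(s_\nu^{(k)})=\pi(s_\nu)=\sigma_\nu$ for $\nu\subseteq R_r$, together with the vanishing of $\pi(s_\eta^{(k)})$ for those $\eta$ whose $n$-core escapes $R_r$; here the unitriangularity of $\{s_\eta^{(k)}\}$ against the Schur basis is used to control the lower-order correction terms. For the inductive step one strips the single $n$-rim hook whose head lies in column $r$ off of $\nu\oplus d$ via a $k$-Schur branching/factorization identity, writing $s_{\nu\oplus d}^{(k)}$ in terms of $s_{\nu\oplus(d-1)}^{(k)}$ times a factor whose $\pi$-image is $\pm q$, and tracking signs.

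\textbf{The main obstacle} is the sign and indexing bookkeeping inside the key claim. One must show that stripping the column-$r$-headed $n$-rim hook off $\nu\oplus d$ always spans a number of rows of the parity that forces the rim-hook sign to equal $+1$, so that no spurious sign appears; and dually that the signed sum $\sum_\kappa(-1)^{(\text{rows})-1}c_{\lambda,\mu}^{\kappa}$ over \emph{all} partitions $\kappa$ with $n$-core $\nu$ and $|\kappa|=|\nu|+nd$ --- which is what $c_{\lambda,\mu}^{\nu,d}$ equals by \cite{BCFF} --- collapses to the single coefficient $C_{\lambda,\mu}^{\nu\oplus d,(k)}$; equivalently, that every $k$-Schur function other than $s_{\nu\oplus d}^{(k)}$ of that degree is sent to $0$ by $\pi$. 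This last assertion is the genuine content: surjectivity of $\pi$ does not by itself force the $k$-Schur basis to map onto the monomial basis $\{q^{d}\sigma_\nu\}$ together with zeros, and it is precisely here that the hypothesis that all $d$ rim-hook heads in $\nu\oplus d$ lie in column $r$ is indispensable --- relaxing it produces extra nonzero images and the bijection fails. (An alternative but less self-contained route would deduce the $q=1$ case from the parabolic Peterson isomorphism composed with strange duality, as in \cite[Prop.~11.10]{ls}, and then attempt to recover the $q$-graded statement; this, however, runs the logic close to the present paper's Main Theorem in reverse.)
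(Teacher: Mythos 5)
This statement is one the paper does not prove at all: Theorem \ref{thm:LucJenGr} is quoted verbatim from Lapointe--Morse \cite[Theorem 18]{lm} and used as a black box ($\LJGr$), so there is no internal proof to compare your argument against. Judged on its own, your proposal is a reasonable reduction but not a proof: everything is funneled into your ``key claim'' that the BCFF quotient map $\pi$ sends $s_{\nu\oplus d}^{(k)}$ to $q^{d}\sigma_{\nu}$ and kills every other $k$-Schur function of the relevant degree, and that claim is essentially the theorem of Lapointe--Morse itself, not a lemma you can dispatch with the tools you invoke. In particular, the inductive step rests on a ``$k$-Schur branching/factorization identity'' for stripping a single $n$-rim hook with head in column $r$ off of $\nu\oplus d$; no such identity is available. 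The established factorization for $k$-Schur functions is the $k$-rectangle property $s_{\lambda\cup R_i}^{(k)}=s_{R_i}^{(k)}s_{\lambda}^{(k)}$ (which the paper uses only through the reduction $\mu\mapsto\mu^{\downarrow}$), and a single $n$-rim hook is not a $k$-rectangle, so the inductive engine of your argument is missing. The base case $d=0$ also needs more than you supply: $\pi(s_{\nu}^{(k)})=\sigma_{\nu}$ is immediate from \cite[Prop.~39]{LMktab}, but the vanishing of $\pi(s_{\eta}^{(k)})$ for all other $k$-bounded $\eta$ of the same degree is already a nontrivial statement about the Schur expansion of $k$-Schur functions, and ``unitriangularity'' alone does not control which lower terms survive the rim-hook reduction.

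There is also a technical point you should not gloss over: $\nu\oplus d$ typically has more than $m$ rows (in the paper's running example $\nu\oplus 1=(2,2,1,1,1)$ with $m=3$), while the BCFF rim-hook algorithm is stated for partitions with at most $m$ rows. To make your $\pi$ well defined on all of $\Lambda\otimes\ZZ[q]$ you must first pass through $\Lambda\to\Lambda_m$ (so $s_{\kappa}\mapsto 0$ whenever $\ell(\kappa)>m$) and only then reduce by rim hooks; consequently $\pi(s_{\nu\oplus d}^{(k)})$ is computed from the full Schur expansion of the $k$-Schur function, most of whose terms are discarded or reduced with signs, and showing that this signed, truncated sum collapses to exactly $q^{d}\sigma_{\nu}$ is precisely the hard combinatorial content that Lapointe--Morse establish with substantial $k$-Schur machinery. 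As written, your argument identifies the right target and correctly flags the obstacle, but it does not close it, so it remains an outline rather than a proof; if you want a complete argument along these lines you should either reproduce the Lapointe--Morse analysis or, as you note, route through the Peterson isomorphism and strange duality --- which, in the context of this paper, would be circular with the Main Theorem.
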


\begin{example}\label{ex:LJGrfav}
To illustrate the Main Theorem, we now instead apply Theorem \ref{thm:LucJenGr} to the original quantum Littlewood-Richardson coefficient from Example \ref{ex:SDfav}.  Recall that $n=k+1=5$ and $r=2$, and consider $\lambda = (2,2,1), \mu = (1,1,0), \nu = (2,0,0) \subset R_2$. When $d=1$, we have
\[
\nu\oplus 1=
{\tiny\tableau[scY]{\times|\times| \times|\times ,\times|\tf,\tf}},
\]
where the cells of the original shape appear in bold, and we add the single 5-rim hook indicated by the cells containing an $\times$, with the head lying in column $r=2$.
Theorem \ref{thm:SDPost} then says that 
\[ c_{\,\tableau[pcY]{|,|,}\, , \, \tableau[pcY]{||}}^{\, \tableau[pcY]{,}\, , \,1} \xlongequal{\LJGr} 
C_{\,\tableau[pcY]{|,|,}\, , \, \tableau[pcY]{||}}^{\nu \oplus 1\, , \, (k)}. \]
\end{example}

\subsubsection{Quantum-to-affine correspondence for the flag variety}\label{sec:LJFl}

Every quantum Littlewood-Richardson coefficient for the flag variety also arises as an affine one, as a direct consequence of the Peterson isomorphism; see \cite[Corollary 9.3]{ls}. We conclude this section by reviewing a combinatorial interpretation of this fact which follows from \cite[Theorem 1.1]{lstoda}, rephrased in a special case as $\LJFl^{-1}$ in Theorem \ref{thm:lstodarephrase} below, following the treatment in \cite[Sec.~7]{BMPSadvances}.

Recall that $R_i$ is the partition defined as $R_i = (i^{n-i})$, and note that for $i \in [k]$, the rectangle $R_i$ is a $k$-bounded partition, which we refer to as a \emph{$k$-rectangle}.  Given $\lambda \in \mathcal P^k$, denote by $\lambda \cup R_i$ the weakly decreasing arrangement of the parts of $\lambda$ and the parts of $R_i$.  Conversely, if $\mu \in \mathcal P^k$ can be written as $\mu = \lambda \cup R_i$ for some $\lambda \in \mathcal P^k$, we say that the $k$-rectangle $R_i$ is \emph{removable} from $\mu$, and the result of \emph{removing the $k$-rectangle} $R_i$ from $\mu$ is the partition $\lambda$. Given any $\mu \in \mathcal P^k$, there is a unique \emph{irreducible} $k$-bounded partition, denoted $\mu^{\downarrow}$, obtained from $\mu$ by removing as many $k$-rectangles as possible.

Given a permutation $w=[w_1 \cdots w_n] \in S_n$, its \emph{inversion sequence} $\Inv(w)$ is defined by $\Inv_i(w) := \# \{ j>i : w_i > w_j \}$. Following \cite{BMPSadvances}, we define an injection
\begin{align}
 \tilde \lambda: &\ S_n \to \mathcal P^k \quad\quad \text{by}\nonumber \\
& \ w\  \mapsto \zeta(w)',\ \ \text{where}\nonumber \\
&\  \zeta_i(w) :=  \Inv_{i}(w_0 w) + \binom{n-i}{2}, \ \ \text{for} \ \  i \in [k].\label{eq:perm2part}
\end{align}
For any $w\in S_n$, set $\tilde \lambda_w := \tilde \lambda (w)$, similar to our notation
for associating a Grassmann permutation $u$ with the partition $\lambda_u$.
The \emph{descent set} of $w \in S_n$ is defined as $D(w) = \{ i \in [k] \mid w_i > w_{i+1}\}$, and the \emph{descent vector} of $w$ is defined to be $\D(w) = \sum_{i \in D(w)} \varepsilon_i$, where $\varepsilon_i \in \ZZ^k$ denotes the $i^{\text{th}}$ standard basis vector; we set $\varepsilon_0 = \varepsilon_n = 0$ by convention. Given any $\d =(d_1, \dots, d_k) \in \ZZ^k$, we define an associated vector by
\begin{equation}\label{eq:tilded}
\tilde \d := \sum_{i \in [k]} d_i(\varepsilon_{i-1} -2\varepsilon_i + \varepsilon_{i+1}) \in \ZZ^k.
\end{equation}

\begin{theorem}[\cite{lstoda, BMPSadvances}]
\label{thm:lstodarephrase}
For any permutations $u,v,w\in S_n$ and any $\d \in\mathbb Z_{\geq 0}^{k}$ such that 
$\tilde \d= \D(w)-\D(u)-\D(v),$
we have 
\begin{equation*}
c_{u,v}^{w,\mathbf d} \xlongequal{\LJFl^{-1}} C_{\tilde \lambda_u^{\downarrow},\, \tilde \lambda_v^{\downarrow}}^{\tilde \lambda_w^{\downarrow},(k)}.
\end{equation*}
\end{theorem}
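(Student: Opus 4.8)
The plan is to deduce the statement as a repackaging of the Lam--Shimozono description of the Peterson isomorphism at the level of structure constants \cite[Theorem 1.1]{lstoda}, organized as a dictionary argument that reconciles the normalizations of \cite{lstoda} and \cite[Sec.~7]{BMPSadvances} with those fixed above. First I would recall the form of the correspondence used in \cite{BMPSadvances}: under Peterson's isomorphism, after inverting the quantum parameters, the Schubert class $\sigma_w \in \QH^*(\Fl_n)$ is identified with a Laurent monomial in the $q_i$ times the $k$-Schur function $s^{(k)}_{\tilde\lambda_w}$, where $s^{(k)}_{\tilde\lambda_w} = \big(\prod_{i \in [k]} q_i^{a_i(w)}\big)\, s^{(k)}_{\tilde\lambda_w^{\downarrow}}$ and $a_i(w)$ is the number of copies of the $k$-rectangle $R_i$ removed in passing from $\tilde\lambda_w$ to its irreducible representative $\tilde\lambda_w^{\downarrow}$. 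Since $\tilde\lambda$ is an injection, the pair $\big(\tilde\lambda_w^{\downarrow},\, (a_i(w))_{i \in [k]}\big)$ recovers $w$. I would then check that the map $\tilde\lambda$ of \eqref{eq:perm2part} agrees with the one in \cite{BMPSadvances}: the twist by $w_0$, the transpose, and the shift by $\binom{n-i}{2}$ are exactly the normalization sending $w_0$ to the empty partition and intertwining the relevant symmetry of $S_n$ with the $k$-rectangle combinatorics, and this is where the conventions of \cite{lstoda} have to be matched on the nose.

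Next I would transport the quantum product $\sigma_u * \sigma_v = \sum_{w,\d} c_{u,v}^{w,\d}\, \mathbf q^{\d}\, \sigma_w$ through the isomorphism and compare with the expansion $s^{(k)}_{\tilde\lambda_u^{\downarrow}} s^{(k)}_{\tilde\lambda_v^{\downarrow}} = \sum_{\eta} C^{\eta,(k)}_{\tilde\lambda_u^{\downarrow},\tilde\lambda_v^{\downarrow}}\, s^{(k)}_{\eta}$ in $\Lambda_n$. Matching the coefficient of $s^{(k)}_{\tilde\lambda_w^{\downarrow}}$ on the two sides, and using that the monomial bookkeeping pins down the triple $\big(\tilde\lambda_w^{\downarrow}, \d, (a_i(w))_i\big)$ for fixed $u,v$, should yield $c_{u,v}^{w,\d} = C^{\tilde\lambda_w^{\downarrow},(k)}_{\tilde\lambda_u^{\downarrow},\tilde\lambda_v^{\downarrow}}$ precisely when the $q$-degree $\d$ records the difference between the $k$-rectangle content of $\tilde\lambda_w$ and that of $\tilde\lambda_u \cup \tilde\lambda_v$. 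The last step is to identify this condition with $\tilde\d = \D(w) - \D(u) - \D(v)$: the descent vector $\D(w)$ encodes which $k$-rectangles are forced to occur in $\tilde\lambda_w$, and the tridiagonal ``second difference'' operator $\d \mapsto \tilde\d$ of \eqref{eq:tilded} is the Cartan-matrix change of coordinates between the lattice recording $q$-degrees and the lattice counting the rectangles $R_i$; carrying out this linear-algebra identification, together with the compatibility of the $\binom{n-i}{2}$ shifts, should produce the stated hypothesis on $\d$.

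The part I expect to be the main obstacle is bookkeeping rather than anything conceptual: one must line up three normalizations --- the affine-Weyl-group formulation of \cite{lstoda}, the partition-theoretic formulation of \cite{BMPSadvances}, and the explicit shifts and tridiagonal matrix of \eqref{eq:perm2part} and \eqref{eq:tilded} --- so that the identity holds with no stray $q$-factor. A related delicate point is to confirm that the hypothesis $\d \in \ZZ^{k}_{\ge 0}$ together with $\tilde\d = \D(w) - \D(u) - \D(v)$ determines $\d$ uniquely; this needs the invertibility of \eqref{eq:tilded} over $\mathbb Q$ and the fact that the resulting $\d$ has nonnegative integer entries whenever $c_{u,v}^{w,\d} \ne 0$, which is precisely the positivity input that will be leveraged later when inverting $\LJFl$ on the image of $\LJGr$ in Section \ref{sec:AffComp}.
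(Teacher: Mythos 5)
The paper does not prove Theorem \ref{thm:lstodarephrase} at all: it is quoted from \cite{lstoda}, rephrased following \cite[Sec.~7]{BMPSadvances}, so there is no internal argument to compare yours against. Your outline is essentially a reconstruction of the derivation those references supply: transport $\sigma_u * \sigma_v$ through the localized Peterson/Toda correspondence, use the $k$-rectangle factorization $s^{(k)}_{\lambda \cup R_i} = s^{(k)}_{R_i}\, s^{(k)}_{\lambda}$ to write each class as an irreducible $k$-Schur function times a monomial in the $s^{(k)}_{R_i}$, and observe that the substitution for the $q_i$ produces exactly the second-difference operator \eqref{eq:tilded}, which is invertible over $\mathbb{Q}$ and so pins down $\d$. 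As a plan for recovering the cited statement, that is the right route, and your identification of the normalization-matching as the real work is accurate.

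One claim in your final step is not literally correct and would derail the matching if taken at face value: you assert that the descent vector $\D(w)$ ``encodes which $k$-rectangles are forced to occur in $\tilde\lambda_w$.'' It does not. In the paper's own Example \ref{ex:LJFlfav}, $w=[21534]$ has $\D(w)=\varepsilon_1+\varepsilon_3$, while the rectangles removed from $\tilde\lambda_w$ are $R_2$ and $R_4$; so the rectangle content of $\tilde\lambda_w$ is not read off from $D(w)$. The descents actually enter through the normalization of the map in \cite{lstoda} and its reformulation in \cite[Sec.~7]{BMPSadvances} (the image of a Schubert class carries rectangle factors indexed by the descent set, which is also what the twist by $w_0$, the transpose, and the shifts $\binom{n-i}{2}$ in \eqref{eq:perm2part} are arranged to track), and the condition $\tilde\d = \D(w)-\D(u)-\D(v)$ emerges from cancelling those descent-indexed factors against the image of $\mathbf{q}^{\d}$ and against the rectangle monomials relating $s^{(k)}_{\tilde\lambda_u}, s^{(k)}_{\tilde\lambda_v}, s^{(k)}_{\tilde\lambda_w}$ to their irreducible parts. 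Until that bookkeeping is carried out against the precise statements in the cited sources, your sketch remains a plan rather than a proof --- which is consistent with the paper's choice to cite the result rather than reprove it --- but the specific descent-to-rectangle identification as you stated it needs to be corrected before the final matching goes through.
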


As the notation suggests, the correspondence presented in Theorem \ref{thm:lstodarephrase} is inverse to $\LJFl$ from the Main Theorem.  We elect to present here the version of this relationship which is both somewhat familiar from the literature on the Peterson isomorphism, and requires fewer technicalities to formally state.  See Theorem \ref{thm:LJFlcomb} in Section \ref{sec:AffComp} for the precise definition of $\LJFl$.

\begin{example}\label{ex:LJFlfav}
To conclude our illustration of the Main Theorem, we now apply Theorem \ref{thm:lstodarephrase} to the quantum Littlewood-Richardson coefficient from Example \ref{ex:FlTfav} illustrating $\FlT$.  Denote the resulting permutations in $S_5$ by $u=[13245], v=[25134], w=[21534],$ and set $\d=(0,1,0,0)$. We see that $\D(u)=\D(v) = \varepsilon_2$ and $\D(w) = \varepsilon_1+\varepsilon_3$, while $\tilde \d = \varepsilon_1-2\varepsilon_2+\varepsilon_3$ by \eqref{eq:tilded}, so that indeed $\tilde \d = \D(w)-\D(u)-\D(v)$.

We illustrate the construction $\tilde \lambda_w^{\downarrow}$ on the permutation $w = [21534]$ in detail.  Compute that $w_0w = [45132]$, in which case $\Inv(w_0w) = (3,3,0,1)$, and so $\tilde \lambda(w) = \zeta(w)'$ where $\zeta(w) = (3,3,0,1)+(6,3,1,0)=(9,6,1,1)$ by formula \eqref{eq:perm2part}.  
Transposing, $\tilde \lambda_w = (\red{4},\blue{2,2,2},2,2,1,1,1) = (2,2,1,1,1) \cup R_4 \cup R_2\in\mathcal P^4$, where
we have colored the parts of the removable $k$-rectangles $\red{R_4}$ and $\blue{R_2}$ in red and blue, respectively.  
Since no further $k$-rectangles are removable, we have $\tilde \lambda_w^{\downarrow} = (2,2,1,1,1) \in \mathcal P^4$.  
In like manner, $\tilde \lambda_u^{\downarrow} = (2,2,1)$ and $\tilde \lambda_v^{\downarrow} = (1,1)$.  Theorem \ref{thm:lstodarephrase} then says that
\begin{equation}\label{eq:LJFlbackward}
c_{[13245]\, ,\, [25134]}^{[21534]\, ,\, (0,1,0,0)}  \xlongequal{\LJFl^{-1}} C_{\,\tableau[pcY]{|,|,}\, , \, \tableau[pcY]{||}}^{\,\tableau[pcY]{|||,|,} \, , \, (4)}.
\end{equation}
In particular, observe that the resulting affine Littlewood-Richardson coefficient is identical to the output from Example \ref{ex:LJGrfav} illustrating $\LJGr$.
\end{example}

\subsection{Illustrating the Main Theorem}\label{sec:favex}

To conclude this section, we unify the examples used to illustrate the results in a single diagram, in order to more clearly demonstrate  the Main Theorem. Given the quantum Littlewood-Richardson coefficient $c_{\lambda, \mu}^{\nu, d}$ for $\QH^*(\Gr_{3,5})$ indexed by $\lambda = (2,2,1), \mu = (1,1,0), \nu = (2,0,0)$ and $d=1$, the following quantum and affine Littlewood-Richardson coefficients are equal:

\begin{center}
\begin{tikzcd}[row sep=tiny]
& c_{\tableau[pcY]{|}\ , \ \tableau[pcY]{||,}}^{\emptyset\, , \, 1}  \arrow[r,"\PC"] & c_{[12435]\, ,\, [23514]}^{[23154]\, ,\, (0,0,1,0)} \arrow[dd, rightarrow, "\FlT"]  \\
 c_{\,\tableau[pcY]{|,|,}\, , \, \tableau[pcY]{||}}^{\, \tableau[pcY]{,}\, , \,1}  \arrow[ur, rightarrow, "\SD"] \arrow[dr,"\LJGr"'] & &  \\
& C_{\,\tableau[pcY]{|,|,}\, , \, \tableau[pcY]{||}}^{\,\tableau[pcY]{\fl| \fl| \fl|\fl,\fl|,} \, , \, (4)} \arrow[r, leftarrow, "\LJFl^{-1}"] & c_{[13245]\, ,\, [25134]}^{[21534]\, ,\, (0,1,0,0)}
\end{tikzcd}
\end{center}

Note that we have visualized each equality in the diagram above using an arrow to indicate the direction in which we have chosen to apply the stated correspondence in this particular example.  Many of these relationships are reversible, of course, the most important of which for our purposes is $\LJFl^{-1}$;  see Theorem \ref{thm:LJFlcomb} in Section \ref{sec:AffComp}, and compare Examples \ref{ex:LJFlfav} and \ref{ex:LJFlcombfav}.


\section{The Peterson comparison formula for the Grassmannian}
\label{sec:comparison}

The goal of this section is to prove Theorem \ref{thm:comparison} by specializing the Peterson comparison formula proved by Woodward \cite{w} to the case of the type $A_k$ Grassmannian.  After reviewing the required root system preliminaries in Section \ref{sec:roots}, we state the type-free version of the Peterson comparison formula in Section \ref{sec:PCtypefree}.  We then specialize to the context of the Grassmannian $\Gr_{m,n}$ in Section \ref{sec:PCGr}, where we prove Theorem \ref{thm:comparison}.

\subsection{Root system preliminaries}\label{sec:roots}

Let $G$ be a connected, simply connected, semisimple complex reductive group of rank $k$.  Fix a Borel subgroup $B$ and a split maximal torus $T$, and denote the Weyl group by $W$.  The Weyl group is a Coxeter group $(W,S)$, where the generators are denoted by $s_i \in S$.  Fix a Cartan subalgebra $\mathfrak{h}$ of the Lie algebra $\mathfrak{g} = \operatorname{Lie}(G)$.  Denote by $R$ the set of roots, and denote by $R^+$ those roots which are positive with respect to $B$. Fix an ordered basis $\Delta = \{ \alpha_i\}_{ i = 1}^{k}$ of simple roots in $\mathfrak{h}^*$, each of which corresponds to a unique simple reflection $s_i \leftrightarrow \alpha_i$.  There is a basis $\Delta^\vee=\{\alpha_i^\vee \}$ for $R^\vee$ of simple coroots in $\mathfrak{h}$, and these bases are dual with respect to the pairing $\langle \cdot , \cdot \rangle : \mathfrak{h} \times \mathfrak{h}^* \to \ZZ$.  Denote the coroot lattice by $Q^\vee = \bigoplus \ZZ \alpha_i^\vee$.  For an element $\gamma = \sum c_i\alpha^\vee_i \in Q^\vee$, we define the support of $\gamma$ to be the subset $\operatorname{Supp}(\gamma) \subseteq [k]$ of indices $i$ such that $c_i \neq 0$. 

Denote by $P$ a standard parabolic subgroup of $G$, equivalently $P \supseteq B$, and recall that the standard parabolics are in bijection with subsets $\Delta_P \subseteq \Delta$. We denote the corresponding coroot lattice by $Q_P^\vee = \bigoplus_{\alpha_i \in \Delta_P} \ZZ \alpha_i^\vee$. The positive roots for $P$, denoted by $R_P^+$, are those roots in $R^+$ which are formed as nonnegative linear combinations of the roots in $\Delta_P$.  
The Weyl group $W_P$ for the parabolic $P$ is generated by those simple reflections $s_i$ which correspond to the roots in $\Delta_P$, and its longest element is denoted by $w_0^P$.  The set of minimal length coset representatives in the quotient $W/W_P$ will be denoted by $W^P$.

\subsubsection{Maximal parabolic subgroups in type $A_k$}

In this paper, we focus on the case in which $G=\SL_n$, the standard Borel is the subgroup of upper-triangular matrices, and the torus is the subgroup of diagonal matrices.  In this case, the Weyl group is the symmetric group $S_n$, and each simple transposition $s_i$ for $ i \in [k]$ can be identified with a reflection in the $k$-dimensional subspace  $V = \{ \vec{v} \in \mathbb{R}^n \mid \sum_{i=1}^n v_i = 0\}$ of $\mathbb{R}^n$.  In particular, the generator $s_i$ reflects across the hyperplane orthogonal to the simple root $\alpha_i = e_i - e_{i+1} \in \Delta$, which corresponds to acting on $\vec{v} \in \mathbb{R}^n$ by interchanging entries $v_i$ and $v_{i+1}$.  The positive roots are then of the form $\alpha_{ij} := e_i - e_j$ where $i < j$.  Since $G = \SL_n$ is simply-laced, the roots and coroots coincide, and so the basis of simple coroots is again given by $\alpha_i^\vee = e_i-e_{i+1}$.  In addition, the pairing $\langle \cdot , \cdot \rangle$ between coroots and roots is the standard Euclidean inner product.

For the majority of Section \ref{sec:comparison}, we further specialize to the situation in which $P$ is a maximal parabolic subgroup of $G=\SL_n$.  In this case, $\Delta_P = \Delta \backslash \{ \alpha_m \} $ for a single index $m \in [k]$, and the quotient $G/P \cong \Gr_{m,n}$.  The Weyl group $W_P \cong S_m \times S_r$, which has  longest element  $w_0^P = [ m \cdots 1\mid n \cdots m+1]$.  The positive roots for $P$ are given by 
$$R_P^+ = \{ e_i-e_j \mid 1 \leq i < j \leq m-1\ \text{or}\ m+1 \leq i < j < n\},$$ so that the quotient $Q^\vee/Q_P^\vee \cong \ZZ \alpha_m^\vee$. The minimal length coset representatives in $W^P$ are those permutations in $\mGrass$, and are thus in bijection with partitions in the rectangle $R_r$.

\subsection{The Peterson comparison formula for $G/P$}\label{sec:PCtypefree}

We now review the Peterson comparison formula relating the quantum Littlewood-Richardson coefficients for any homogeneous space $G/P$ to certain ones for $G/B$, with the goal of specializing to the case of the Grassmannian and providing a proof of Theorem \ref{thm:comparison}.  We largely follow the treatment of Lam and Shimozono \cite{ls}, though this formula was originally stated by Peterson \cite{Pet} and proved by Woodward \cite{w}.

To any vector $\mathbf{d} = (d_1, \dots, d_{k}) \in \mathbb{Z}^{k}$ , we can associate a unique coroot $\gamma_{\mathbf{d}} = d_1\alpha_1^\vee + \cdots + d_{k}\alpha_{k}^\vee \in Q^\vee,$ and vice versa.  Since $H_2(G/B) \cong Q^\vee$, the degree of any curve on $G/B$ can be associated with an element of $Q^\vee$, and we use this isomorphism to identify a degree with a coroot $\gamma_{\mathbf{d}} \in Q^\vee$ or a vector $\mathbf{d} \in \ZZ^{k}$ interchangeably.  Similarly, $H_2(G/P) \cong Q^\vee / Q^\vee_P$, and so the degree of a curve on $G/P$ can be identified with an element in the quotient $\gamma_{\mathbf{d}_P} \in Q^\vee/ Q^\vee_P$, or interchangeably with a vector $\mathbf{d}_P \in \ZZ^{p},$ where $p = \# \Delta \backslash \Delta_P $.

We now state the Peterson comparison formula as it appears in Theorem 10.15 from \cite{ls}, originally proved in Lemma 1 and Theorem 2 of \cite{w}.

\begin{theorem}\cite{w,ls}
\label{T:Woodward}
 Fix a parabolic subgroup $P$, and denote by  $\pi_P: Q^\vee \rightarrow Q^\vee/Q^\vee_P$ the natural projection.
\begin{enumerate}
\item For every $\gamma_{\mathbf{d}_P} \in Q^\vee/Q^\vee_P$, there exists a unique $\gamma_{\mathbf{d}_B} \in Q^\vee$ such that both $\pi_P(\gamma_{\mathbf{d}_B}) = \gamma_{\mathbf{d}_P}$ and $\langle \gamma_{\mathbf{d}_B}, \alpha \rangle \in \{ 0,-1\}$ for all $\alpha \in R^+_P$.
\item For any $u, v, w \in W^P$ and a fixed degree $\mathbf{d}_P$, we have 
\begin{equation*}
c_{u,v}^{w_0ww_0^P,\, \d_P} = c_{u,v}^{w_0ww_0^{P'},\, \d_B}
\end{equation*}
where $P'$ is the parabolic subgroup defined by $\Delta_{P'} = \{ \alpha \in \Delta_P \mid \langle \gamma_{\mathbf{d}_B}, \alpha \rangle = 0\}.$
\end{enumerate}
\end{theorem}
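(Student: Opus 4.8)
The plan is to handle the two parts separately: part~(1) is an elementary statement about the coroot lattice and the root system of the Levi of $P$, while part~(2) carries the geometric content and is where the real difficulty lies. For part~(1), I would first split $\Delta_P$ into its irreducible components $\Delta_P=\Delta_{P,1}\sqcup\cdots\sqcup\Delta_{P,l}$. Since $Q^\vee_P=\bigoplus_j Q^\vee_{P,j}$ and the condition $\langle\gamma_{\mathbf{d}_B},\alpha\rangle\in\{0,-1\}$ for $\alpha\in R^+_P$ constrains only the projection of $\gamma_{\mathbf{d}_B}$ onto the real span of each $\Delta_{P,j}$, the problem decouples over the components, so it suffices to treat one at a time. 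For a fixed component, note that $\gamma_{\mathbf{d}_B}\in Q^\vee$ pairs integrally with every root, hence the relevant pairings are automatically integers and ``$\in\{0,-1\}$'' means the same as ``$\in[-1,0]$''; moreover the projection of $\gamma_{\mathbf{d}_B}$ lies in the coweight lattice of that component, and the image of $\gamma_{\mathbf{d}_P}$ in this factor is a well-defined class in the finite quotient (coweight lattice)$/$(coroot lattice), i.e.\ the center of the corresponding simply connected group. I would then invoke the classical fact that, for an irreducible root system, the negatives of the minuscule fundamental coweights together with $0$ form a complete and irredundant system of representatives for this quotient, and that such coweights pair with every positive root in $\{0,-1\}$ --- because $\langle\omega_i^\vee,\beta\rangle$ lies between $0$ and the coefficient of $\alpha_i$ in the highest root, which equals $1$ exactly at a minuscule node. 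Existence and uniqueness of $\gamma_{\mathbf{d}_B}$ follow component by component, and reassembling produces the desired element of $Q^\vee$; as a byproduct one reads off that $\Delta_{P'}$ consists precisely of the simple roots in $\Delta_P$ not met by the minuscule coweights occurring in the lift.

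For part~(2), I would follow Woodward's argument \cite{w} via principal bundles over $\mathbb{P}^1$. The Gromov--Witten invariant $c_{u,v}^{w_0ww_0^P,\,\mathbf{d}_P}$ counts, with virtual multiplicity, degree-$\mathbf{d}_P$ genus-zero stable maps $\mathbb{P}^1\to G/P$ meeting general translates of three Schubert varieties; generically such maps are honest morphisms, hence sections of the associated $G/P$-bundle on $\mathbb{P}^1$ determined by $\mathbf{d}_P$. Using Grothendieck's classification of $G$-bundles on $\mathbb{P}^1$ by cocharacters, the crux is that the particular lift $\mathbf{d}_B$ of part~(1) is the degree for which a dense open locus of the relevant $G/B$-bundle data maps isomorphically to the corresponding $G/P$-bundle data, so that sections correspond bijectively. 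The essential bookkeeping is to track how the third incidence condition transforms: the stabilizer of a generic section value at the marked point grows from $W_P$ to $W_{P'}$ precisely along the simple reflections $s_i$ with $\langle\gamma_{\mathbf{d}_B},\alpha_i\rangle=0$, which is exactly what replaces $w_0^P$ by $w_0^{P'}$ in the Schubert index.

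The step I expect to be hardest is the final one: checking the dimension counts and transversality so that the correspondence on these open loci upgrades to an equality of the actual enumerative numbers --- and that the stable maps which are not sections, together with boundary contributions, do not disturb the count. This excess-intersection bookkeeping is the delicate heart of Woodward's proof. As an alternative route, once the Peterson isomorphism of Lam--Shimozono \cite{ls} is in hand, one can transport both Gromov--Witten invariants to the homology of the affine Grassmannian $\HH_*(\AfGr_G)$ and deduce the identity there; but the bundle-theoretic argument is the original one.
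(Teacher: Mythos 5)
A preliminary point: the paper contains no proof of Theorem \ref{T:Woodward}. It is quoted as it appears in Theorem 10.15 of \cite{ls}, originally Lemma 1 and Theorem 2 of \cite{w}, and the paper's own contribution begins afterwards, with the explicit computation of $\mathbf{d}_B$ and $w_0^{P'_d}$ for the Grassmannian (Lemma \ref{T:InnerProdCalc}, Proposition \ref{T:WoodwardGr}, Theorem \ref{thm:comparison}). So there is no in-paper argument to measure you against; the only meaningful comparison is with Woodward and Lam--Shimozono themselves.

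On that footing, your part (1) is a genuine and essentially correct argument, and it is the standard one: the condition $\langle \gamma_{\mathbf{d}_B},\alpha\rangle\in\{0,-1\}$ decouples over the orthogonal irreducible components of $\Delta_P$; the projection of any lift onto the span of a component lies in that component's coweight lattice, with class modulo $Q^\vee_{P,j}$ determined by $\gamma_{\mathbf{d}_P}$; and the coweights pairing in $\{0,-1\}$ with every positive root of the component are exactly $0$ together with the negatives of the fundamental coweights $\omega_i^\vee$ at nodes whose coefficient in the highest root is $1$, which form an irredundant system of representatives of (coweight lattice)/(coroot lattice). Your terminology slides between ``minuscule'' and what is usually called cominuscule, but the condition you actually use --- coefficient $1$ in the highest root --- is the correct one, so this is a cosmetic slip. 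Part (2), by contrast, is not a proof but a road map of Woodward's bundle-theoretic argument, and you say so yourself: the identification of the degree $\mathbf{d}_B$ as the one for which sections of the relevant $G/B$- and $G/P$-bundle data correspond, and above all the transversality and excess-intersection bookkeeping that upgrades this correspondence to an equality of Gromov--Witten numbers, is precisely the content of \cite{w} and is left to the citation. That is no worse than the paper's own treatment, which cites the same sources, but it means your proposal should be read as a self-contained proof of part (1) plus an accurate attribution and outline for part (2), not as an independent proof of the full theorem.
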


Woodward's original proof of Theorem \ref{T:Woodward} uses the geometry of principal bundles over algebraic curves arising in \cite{AtiyahBott, r}, but these arguments unfortunately do not provide an explicit means for directly computing $\mathbf{d}_B$ or $w_0^{P'}$. We illustrate Theorem \ref{T:Woodward} in the following example, which shows that, in principle, one can calculate the vector $\mathbf{d}_B$ and the element $w_0^{P'}$ using this version of the theorem.

\begin{example}\label{Ex:Woodward}
Suppose that $G/P \cong \Gr_{3,5}$, which means that $P$ is the maximal parabolic in $G=\SL_5$ corresponding to $\Delta_P = \{\alpha_1, \alpha_2, \alpha_4\}$. 
From Example \ref{ex:SDfav}, we know that $c_{\lambda_u, \lambda_v}^{\lambda_w, 1} = 1$, where $\lambda_u=(1,0,0), \lambda_v = (2,1,1), \lambda_w = (0,0,0)$ are the partitions in $R_2$ corresponding to the Grassmann permutations $u=[12435], v=[23514], w=[12345]$ in $S_5^3$.

Any lift of the degree $\mathbf{d}_P = 1$, which corresponds to the coroot $\gamma_{\mathbf{d}_P} = \alpha_3^\vee \in Q^\vee/Q^\vee_P$,  is necessarily of the form $\gamma_{\mathbf{d}_B} = d_1\alpha_1^\vee + d_2\alpha_2^\vee + 1\alpha_3^\vee + d_4\alpha_4^\vee$.  For this parabolic, $R_P^+ = \{\alpha_1, \alpha_2, \alpha_1+\alpha_2, \alpha_4\}$, and we can compute directly that 
\[ \langle \alpha_3^\vee, \alpha_1 \rangle = 0 \quad \text{and} \quad \langle \alpha_3^\vee, \alpha_2 \rangle = \langle \alpha_3^\vee, \alpha_1+\alpha_2 \rangle = \langle \alpha_3^\vee, \alpha_4 \rangle = -1.
\]
Therefore, in fact $\gamma_{\mathbf{d}_B} = \alpha^\vee_3$ itself satisfies $\langle \gamma_{\mathbf{d}_B}, \alpha\rangle \in \{0,-1\}$ for all $\alpha \in R_P^+$. By the uniqueness in part (1) of Theorem \ref{T:Woodward}, we must then have $\mathbf{d}_B = (0,0,1,0)$.  

The same calculation illustrates that $\Delta_{P'} = \{\alpha_1\}$ so that the longest element of the parabolic subgroup $W_{P'} = \langle s_1 \rangle$ equals $w_0^{P'} = s_1$.  Therefore, by part (2) of Theorem \ref{T:Woodward}, we have
\[ c_{\tableau[pcY]{|}\ , \ \tableau[pcY]{||,}}^{\emptyset, 1}  = \,c_{[12435]\, ,\, [23514]}^{[23154]\, ,\, (0,0,1,0)},\]
where the quantum Littlewood-Richardson coefficient on the righthand side is for the complete flag variety $\Fl_5$; compare Example \ref{ex:PCfav} obtained via $\PC$.
\end{example}

\subsection{The Peterson comparison formula for the Grassmannian}\label{sec:PCGr}

Any individual calculation comparing a particular quantum Littlewood-Richardson coefficient for $G/P$ to one from $G/B$ using the Peterson comparison formula from Theorem \ref{T:Woodward} can be carried out in a manner similar to Example \ref{Ex:Woodward}.  Theorem \ref{thm:comparison} provides a closed formula for both the degree $\mathbf{d}_B$ and the element $w_0^{P'}$ occurring in Theorem \ref{T:Woodward}, in the special case where $P$ is a maximal parabolic subgroup of $G = \SL_n$.  We remark that formulas for both $\d_B$ and $w_0^{P'}$ in this case were obtained by Leung and Li, though using somewhat different language; see \cite{LLiJDG, LLiIMRN}.

For the remainder of Section \ref{sec:comparison}, we specialize to the situation of $G = \SL_n$ and $P$ the maximal parabolic subgroup such that $\Delta_P = \Delta \backslash \{ \alpha_m \} $ for some $m\in [k]$.  For $G/P \cong \Gr_{m,n}$, a degree $\mathbf{d}_P$ is a single nonnegative integer, and so we typically omit both the subscript and vector notation and write $d$ for the degree.

We now define some auxiliary notation which will be useful in describing the parabolic subgroup arising in part (2) of Theorem \ref{T:Woodward}. Denote by 
\begin{equation*}
\Delta_{P_{ij}} := \{ \alpha_i, \alpha_{i+1}, \dots, \alpha_{j}\},
\end{equation*}
where by convention $\Delta_{P_{ij}} = \emptyset$ if $i> j$.  We then have the corresponding Weyl group
\begin{equation*}
W_{P_{ij}} = \langle s_i, s_{i+1}, \dots, s_{j} \rangle,
\end{equation*}
where $W_{P_{ij}}$ is trivial if $i> j$. We can express the window for the longest element in $W_{P_{ij}}$ maximizing the number of inversions as follows:
\begin{equation}\label{E:w0Pij}
w_0^{P_{ij}} = [ 1 \cdots i-1\ |\ j+1 \cdots i\ |\ j+2 \cdots n].
\end{equation}
Of course, if $W_{P_{ij}}$ is trivial, then $w_0^{P_{ij}}$ is simply the identity in $S_n$.

We can now state the key proposition from which Theorem \ref{thm:PetComp} follows; compare \cite[Lemma 3.6]{LLiJDG} and \cite[p. 3718]{LLiIMRN}. Since the parabolic subgroup $P'$ in Theorem \ref{T:Woodward} depends on the fixed degree $d$, we indicate this dependence by denoting $P_{\!d}':=P'$ henceforth.

\begin{proposition}\label{T:WoodwardGr}
Suppose that $\Delta_P = \Delta \backslash \{ \alpha_m \}$, and fix an integer $d$ such that $0 \leq d \leq \min \{m,r\}$.  Then
\begin{equation}\label{E:dB}
\textbf{d}_B = (0^{m-d}, 1, 2, \dots, d-1, d, d-1, \dots, 2, 1, 0^{r-d}).
\end{equation}
 In addition, $\Delta_{P'_d} = \Delta \backslash \{\alpha_{m-d}, \alpha_m, \alpha_{m+d}\}$, and so we can write
\begin{equation}\label{eq:Coxprod}
w_0^{P'_d} = w_0^{P_{1, m-d-1}}\cdot w_0^{P_{m-d+1, m-1}}\cdot w_0^{P_{m+1,m+d-1}}\cdot w_0^{P_{m+d+1,k}}.
\end{equation}
\end{proposition}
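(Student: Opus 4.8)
The plan is to verify that the vector $\mathbf{d}_B$ displayed in \eqref{E:dB} — which is well defined since the hypothesis $0 \le d \le \min\{m,r\}$ forces $m-d \ge 0$ and $r-d \ge 0$ — satisfies the two conditions characterizing the unique lift in part (1) of Theorem \ref{T:Woodward}; uniqueness then pins it down. Once $\mathbf{d}_B$ is known, $\Delta_{P'_d}$ is read off directly from part (2) of Theorem \ref{T:Woodward}, and the factorization \eqref{eq:Coxprod} follows from the shape of the type $A_k$ Dynkin diagram.

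First I would pass to the standard basis. Writing $\gamma_{\mathbf{d}_B} = \sum_{l=1}^{k} d_l\alpha_l^\vee$ with $\alpha_l^\vee = e_l - e_{l+1}$ and setting $d_0 = d_n = 0$, the coefficient of $e_l$ in $\gamma_{\mathbf{d}_B}$ is $c_l := d_l - d_{l-1}$. A direct computation from \eqref{E:dB} gives $c_l = 1$ for $m-d+1 \le l \le m$, $c_l = -1$ for $m+1 \le l \le m+d$, and $c_l = 0$ otherwise; in particular $(c_1, \dots, c_n)$ is weakly increasing on $\{1, \dots, m\}$ with values in $\{0,1\}$ and weakly increasing on $\{m+1, \dots, n\}$ with values in $\{-1,0\}$. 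Since $Q^\vee/Q^\vee_P \cong \ZZ\alpha_m^\vee$ and the coset of $\gamma_{\mathbf{d}_B}$ is determined by its $\alpha_m^\vee$-coefficient $d_m = d$, the projection condition $\pi_P(\gamma_{\mathbf{d}_B}) = \gamma_{\mathbf{d}_P}$ holds. For the pairing condition, recall that $\langle\cdot,\cdot\rangle$ is the Euclidean inner product and that $R_P^+$ consists of the roots $e_i - e_j$ with $1 \le i < j \le m$ or with $m+1 \le i < j \le n$, so that $\langle\gamma_{\mathbf{d}_B}, e_i-e_j\rangle = c_i - c_j$; by the monotonicity of $(c_l)$ within each of these two blocks, $c_i - c_j \in \{0,-1\}$ in all cases. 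Invoking the uniqueness in part (1) of Theorem \ref{T:Woodward} then proves \eqref{E:dB}.

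Next, part (2) of Theorem \ref{T:Woodward} identifies $\Delta_{P'_d}$ with $\{\alpha_l \in \Delta_P : \langle\gamma_{\mathbf{d}_B},\alpha_l\rangle = 0\} = \{\alpha_l : l \neq m,\ c_l = c_{l+1}\}$. From the values of $(c_l)$, the only indices $l$ in $[k]$ with $c_l \neq c_{l+1}$ are $l = m-d$ (a jump $0 \to 1$), $l = m$ (a jump $1 \to -1$), and $l = m+d$ (a jump $-1 \to 0$), with the conventions that $m$ is excluded from $\Delta_P$ regardless, that $m-d$ is absent when $d = m$, that $m+d$ is absent when $d = r$, and that all three indices collapse to $m$ when $d = 0$. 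In every case this gives $\Delta_{P'_d} = \Delta \setminus \{\alpha_{m-d},\alpha_m,\alpha_{m+d}\}$. Deleting the nodes $m-d$, $m$, $m+d$ from the type $A_k$ Dynkin diagram on $\{1,\dots,k\}$ leaves the four (possibly empty) sub-paths $\{1,\dots,m-d-1\}$, $\{m-d+1,\dots,m-1\}$, $\{m+1,\dots,m+d-1\}$, $\{m+d+1,\dots,k\}$; hence $W_{P'_d}$ is the direct product of the parabolic subgroups $W_{P_{1,m-d-1}}$, $W_{P_{m-d+1,m-1}}$, $W_{P_{m+1,m+d-1}}$, $W_{P_{m+d+1,k}}$, whose supports are pairwise disjoint and mutually non-adjacent. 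As the longest element of such a direct product is the (commuting) product of the longest elements of the factors, \eqref{eq:Coxprod} follows.

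These are routine calculations; the steps needing care are the translation between the coroot coordinates $(d_l)$ and the standard coordinates $(c_l)$ used to compute the pairing, and the degenerate values $d \in \{0,m,r\}$, where one or more of $\alpha_{m-d}$, $\alpha_m$, $\alpha_{m+d}$ either fails to exist or coincides with another, so that some of the interval factors in \eqref{eq:Coxprod} become trivial.
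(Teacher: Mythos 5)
Your proof is correct, and its overall architecture matches the paper's: verify that the displayed vector satisfies the two conditions characterizing the unique lift in part (1) of Theorem \ref{T:Woodward}, invoke uniqueness, read off $\Delta_{P'_d}$ from the vanishing of the pairings, and obtain \eqref{eq:Coxprod} because the remaining simple roots split into mutually non-adjacent intervals whose parabolic subgroups commute. Where you genuinely diverge is in how the pairings are computed. The paper isolates this in Lemma \ref{T:InnerProdCalc}, working in coroot coordinates and running a multi-case analysis against the Cartan matrix entries $\langle\alpha_i^\vee,\alpha_j\rangle$ (cases $j<m-d$, $j=m-d$, $m-d+1\leq j\leq m-1$, $j=m$, plus symmetry), and then must treat non-simple roots $\alpha\in R_P^+$ separately by summing the simple-root values. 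You instead pass to the standard basis, observing that the coefficient of $e_l$ in $\gamma_{\mathbf{d}_B}$ is the telescoped difference $c_l=d_l-d_{l-1}$, which for the palindromic $\mathbf{d}_B$ is the step function $1$ on $\{m-d+1,\dots,m\}$, $-1$ on $\{m+1,\dots,m+d\}$, and $0$ elsewhere; then $\langle\gamma_{\mathbf{d}_B},e_i-e_j\rangle=c_i-c_j$, and the monotonicity of $(c_l)$ within each of the two $W_P$-blocks handles all of $R_P^+$ (simple and non-simple roots at once) in a single stroke, while the jump locations $m-d$, $m$, $m+d$ of $(c_l)$ immediately give $\Delta_{P'_d}$. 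This is a tidier bookkeeping that buys a shorter, uniform verification at the cost of being tied to the type $A$ realization in $\varepsilon$-coordinates, whereas the paper's Cartan-matrix computation is the one that would transfer verbatim to other root systems. Your attention to the degenerate values $d\in\{0,m,r\}$ (collapsing or nonexistent nodes, trivial interval factors) matches the parenthetical care the paper takes in its own proof.
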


Before proving a proof, we provide an example which illustrates how to easily compute the values of $\d_B$ and $w_0^{P'_d}$ using Proposition \ref{T:WoodwardGr}.

\begin{example}\label{Ex:Gr(49)}
Consider $\Gr_{4,9}$ so that $m=4$ and $r=5$.  We use Proposition \ref{T:WoodwardGr} to compute both $\mathbf{d}_B$ and $w_0^{P'_d}$ for any degree $0 \leq d\leq 4$.   The degree $\d_B$ has a palindromic nature, and is easily derived by placing the value $d$ in the $m^{\text{th}}$ entry and decreasing in both directions.

\begin{table}[htp]
\begin{center}\label{Tab:Gr(49)WoodwardEx}
\begin{tabular}{|c|c|c|} \hline
$d$ & $\mathbf{d}_B$ & $w_0^{P'_d}$ \\ \hline
 
0 & $(0,0,0,0,0,0,0,0)$ & $[4 3 2 1 \, \textcolor{red}{|}\textcolor{blue}{|}\textcolor{red}{|} \, 98765] $\\ \hline

1 & $(0,0,0,1,0,0,0,0)$ & $[3 2 1\, \textcolor{red}{|}\, 4\, \textcolor{blue}{|}\, 5\, \textcolor{red}{|}\, 9 8 7 6]$  \\ \hline

2 & $(0,0,1,2,1,0,0,0)$& $[2 1\, \textcolor{red}{|}\, 4 3\, \textcolor{blue}{|}\, 6 5\, \textcolor{red}{|}\, 9 8 7]$  \\ \hline

3 &$(0,1,2,3,2,1,0,0)$ & $[1\, \textcolor{red}{|}\, 4 3 2\, \textcolor{blue}{|}\, 7 6 5\, \textcolor{red}{|}\, 9 8]$  \\ \hline

4 &$(1,2,3,4,3,2,1,0)$ & $[\textcolor{red}{|}\, 4 3 2 1\, \textcolor{blue}{|}\, 9 7 6 5\, \textcolor{red}{|}\, 9]$ \\ \hline
\end{tabular}
\end{center}
\end{table}
\end{example}

Note that each of the elements $w^{P_{ij}}_0$ occurring in \eqref{eq:Coxprod} acts on a disjoint subset of $[n]$. We may thus record the window for the permutation $w_0^{P'_d}$ using the following algorithm:
\begin{enumerate}
\item Draw the window for the identity permutation.
\item Draw one blue line at position $m$; i.e. between $m$ and $m+1$.
\item Draw two red lines at positions $m\pm d$; note that $0 \leq m\pm d \leq n$ since by hypothesis $0 \leq d \leq \min\{m,r\}$, which means that both of these red lines fit in the window.
\item Reverse the numbers within each of the groups separated by these colored lines. 
\end{enumerate}

For the proof of Proposition \ref{T:WoodwardGr}, it will be helpful to first calculate several of the inner products which will arise in the course of the proof, which we record in the following lemma.

\begin{lemma}\label{T:InnerProdCalc}
Fix an integer $d$ such that $0 < d \leq \min\{m, r\}$.  If $\mathbf{d}_B$ is the palindromic vector defined in  \eqref{E:dB}, then $\gamma_B:= \gamma_{\mathbf{d}_B}$ satisfies 
\begin{equation}\label{E:LambdaValues}
\langle \gamma_B, \alpha_j \rangle = 
\begin{cases}
2, & \text{if}\ j = m, \\
-1, & \text{if}\ j = m \pm d,\\
0, & \text{otherwise}.
\end{cases}
\end{equation} 
In particular, $\langle \gamma_B, \alpha_j \rangle \in \{0,-1\}$ for all $\alpha_j \in \Delta_P$.
\end{lemma}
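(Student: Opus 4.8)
The plan is to compute the pairing $\langle \gamma_B, \alpha_j \rangle$ directly from the explicit palindromic vector $\mathbf{d}_B = (d_1,\dots,d_k)$ in \eqref{E:dB}, using the fact that in type $A_k$ the Cartan pairing is the standard inner product with $\alpha_j = e_j - e_{j+1}$ and $\gamma_B = \sum_i d_i \alpha_i^\vee = \sum_i d_i(e_i - e_{i+1})$. Expanding $\gamma_B$ telescopically, the $e_j$-coefficient of $\gamma_B$ is $d_j - d_{j-1}$ (with the convention $d_0 = d_{k+1} = 0$), so that $\langle \gamma_B, \alpha_j \rangle = \langle \gamma_B, e_j - e_{j+1}\rangle = (d_j - d_{j-1}) - (d_{j+1} - d_j) = 2d_j - d_{j-1} - d_{j+1}$, the discrete second difference of the sequence $\mathbf{d}_B$.

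From here everything is a case check on the shape of $\mathbf{d}_B$. The entries of $\mathbf{d}_B$ are: $0$ for $1 \le j \le m-d$; then they increase by $1$ per step as $1,2,\dots,d$ reaching the peak value $d$ at $j = m$; then decrease by $1$ per step back down to $0$; and are $0$ again for $m+d \le j \le k$. So the second difference $2d_j - d_{j-1} - d_{j+1}$ vanishes everywhere the sequence is locally linear: on the two flat stretches of zeros, on the strictly increasing arm, and on the strictly decreasing arm. The only places the second difference is nonzero are the three "corners": at $j = m$ (a local max, where $d_{m-1} = d_{m+1} = d-1$ and $d_m = d$, giving $2d - 2(d-1) = 2$); at $j = m-d$, the transition from the flat zeros to the increasing arm (where $d_{m-d-1} = 0$, $d_{m-d} = 0$, $d_{m-d+1} = 1$, giving $0 - 0 - 1 = -1$); and symmetrically at $j = m+d$ (giving $-1$). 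The hypothesis $0 < d \le \min\{m,r\}$ is exactly what guarantees $1 \le m-d$ and $m+d \le k+1$, i.e.\ that these corners land in valid index range and that $j=m$ is genuinely an interior peak flanked by the value $d-1$ on both sides (and $d-1 \ge 0$); one should note the mild boundary cases $m-d = 1$ or $m+d = k$ or $m+d = k+1$ separately, checking they still give $-1$ using $d_0 = d_{k+1} = 0$. This establishes \eqref{E:LambdaValues}.

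For the final sentence, observe that $\Delta_P = \Delta \setminus \{\alpha_m\}$, so $\alpha_j \in \Delta_P$ means $j \ne m$; by \eqref{E:LambdaValues} the pairing is then either $-1$ (if $j = m \pm d$) or $0$, hence in $\{0,-1\}$. I expect no real obstacle here: the computation is elementary once the second-difference reformulation is in hand. The only thing requiring care is the bookkeeping at the endpoints of the index range — making sure the claimed corners at $m \pm d$ are covered even when they coincide with $1$ or $k$ — which is handled cleanly by the conventions $d_0 = d_{k+1} = 0$.
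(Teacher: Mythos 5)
Your proposal is correct and takes essentially the same route as the paper: both arguments reduce $\langle \gamma_B, \alpha_j \rangle$ to the discrete second difference $-d_{j-1}+2d_j-d_{j+1}$ of the palindromic vector $\mathbf{d}_B$ and run the same case check at the corners $j=m$ and $j=m\pm d$, the paper phrasing this as a sum of Cartan pairings $\langle \alpha_i^\vee,\alpha_j\rangle$ over the support of $\gamma_B$ rather than via your telescoping through the $e_i$-basis. One minor remark: when $d=r$ (resp.\ $d=m$) the index $m+d$ (resp.\ $m-d$) equals $n$ (resp.\ $0$) and lies outside $[k]$, so that boundary ``corner'' is vacuous rather than something needing a separate $-1$ check.
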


\begin{proof}
Start by expressing $\gamma_B$ explicitly as 
\begin{equation}\label{E:LambdaBForm}
\gamma_B = \alpha^\vee_{m-d+1} + 2\alpha^\vee_{m-d+2} + \cdots + (d-1)\alpha^\vee_{m-1} + d\alpha^\vee_m + (d-1)\alpha^\vee_{m+1} + \cdots + 2\alpha^\vee_{m+d-2} + \alpha^\vee_{m+d-1}.
\end{equation}
We thus see that 
\begin{equation*} \operatorname{Supp}(\gamma_B) = \{m-d+1, \dots, m+d-1\}.\end{equation*}
Note that the hypothesis $0 < d \leq \min\{m,r\}$ guarantees that $m-d+1 \leq m+d-1$, so that $\operatorname{Supp}(\gamma_B)$ is a set containing exactly $2d-1$ consecutive integers in $[k]$.  

We now fix any $\alpha_j \in \Delta$ and directly compute $\langle \gamma_B, \alpha_j \rangle$.  Recall that in type $A_k$ we have 
\begin{equation}\label{E:Cartan}
\langle \alpha^\vee_i, \alpha_j \rangle = 
\begin{cases}
2, & \text{if}\ i=j, \\
-1, & \text{if}\ |i-j| = 1, \\
0, & \text{otherwise}.
\end{cases}
\end{equation}
The proof thus naturally divides into cases, depending on the relationship between $j$ and $\operatorname{Supp}(\gamma_B)$.

\textbf{Case (1):} $j<m-d$.  Since $j \leq m-d-1$, then for any $i \in \operatorname{Supp}(\gamma_B)$, we have $|i-j| \geq 2$.  Therefore, by linearity of the pairing and Equation \eqref{E:Cartan} we have $\langle \gamma_B, \alpha_j \rangle = 0$.

\textbf{Case (2):} $j = m-d$.  If $j=m-d$, then there exists a unique $i  \in \operatorname{Supp}(\gamma_B)$ such that $|i-j|=1$; namely $i = m-d+1$, for which $|i-j|=1$. For all other $p \in \operatorname{Supp}(\gamma_B)$, we have $|p-j| \geq 2$, and so these each contribute 0 to $\langle \gamma_B, \alpha_j \rangle$.  By Equation \eqref{E:Cartan}, we then have $\langle \gamma_B, \alpha_j \rangle = -1$.

\textbf{Case (3):} $m-d+1 \leq j \leq m-1$.  Here it is natural to further divide into two subcases.  First suppose that $j=m-d+1$, in which case there are exactly two indices $i \in \operatorname{Supp}(\gamma_B)$ such that $|i-j|\leq 1$; namely $i=m-d+1$ and $i=m-d+2$.  Therefore, by linearity and Equation \eqref{E:Cartan} applied to \eqref{E:LambdaBForm}, we have $\langle \gamma_B, \alpha_j \rangle = 1(2) + 2(-1) = 0$ when $j=m-d+1$.

Now suppose that $m-d+1 < j \leq m-1$.  For all such $j$, there are exactly three values for $i \in \operatorname{Supp}(\gamma_B)$ such that $|i-j|\leq 1$; namely $i \in \{j-1, j, j+1\}$.  The coefficients of the coroots $\alpha_{j-1}^\vee, \alpha_j^\vee,$ and $\alpha_{j+1}^\vee$ in $\gamma_B$ are always three consecutive increasing positive integers $a-1, a, a+1$ for some $a \in \{2, \dots, d-1\}$. Combining Equations \eqref{E:LambdaBForm} and \eqref{E:Cartan}, we have $\langle \gamma_B, \alpha_j \rangle = (a-1)(-1)+a(2)+(a+1)(-1) = 0$ for all $m-d+1< j \leq m-1$.

\textbf{Case (4):} $j = m$.  In this case, there are only three values for $i \in \operatorname{Supp}(\gamma_B)$ such that $|i-j| \leq 1$; namely $i \in \{m-1, m, m+1\}$.  Therefore, by Equations \eqref{E:LambdaBForm} and \eqref{E:Cartan}, we see that $\langle \gamma_B, \alpha_j \rangle = (d-1)(-1) + d(2) + (d-1)(-1) = 2,$ as claimed.

All remaining cases in which $j>m$ follow by the symmetry of $\gamma_B$.
\end{proof}

We are now prepared to prove Proposition \ref{T:WoodwardGr}, from which the proof of Theorem \ref{thm:comparison} then directly follows.

\begin{proof}[Proof of Proposition \ref{T:WoodwardGr}]  Fix an integer $0 \leq d \leq \min\{m,r\}$, and define the vector $\mathbf{d}_B$ as in Equation \eqref{E:dB}.  We first verify that $d \mapsto \mathbf{d}_B$ under the Peterson comparison formula of Theorem \ref{T:Woodward}. Under the projection map $\pi_P: Q^\vee \rightarrow Q^\vee/Q_P^\vee$ we clearly have $\pi_P(\mathbf{d}_B) = d$, since the value $d$ is placed in the $m^{\text{th}}$ entry of $\mathbf{d}_B$ and thus corresponds to the coefficient of $\alpha_m^\vee$ in $\gamma_B := \gamma_{\mathbf{d}_B}$.  Regarding $\gamma_B$, it thus remains only to check that $\langle \gamma_B, \alpha \rangle \in \{0,-1\}$ for all $\alpha \in R_P^+$.  

We first observe that if $d=0$, then $\mathbf{d}_B = (0,\dots, 0)$ as well, in which case we automatically have $\langle \gamma_B, \alpha \rangle =0$ for all $\alpha \in R_P^+$.  Now suppose that $0 < d \leq \min \{m, r\}$, and recall that Lemma \ref{T:InnerProdCalc} calculates $\langle \gamma_B, \alpha_j \rangle$ for any simple root $\alpha_j \in \Delta_P$.   Suppose now that $\alpha \in R_P^+$ is not a simple root, say $\alpha = \alpha_{ij} = \alpha_i + \cdots + \alpha_j$.  Since $R_P^+$ is spanned by $\Delta_P = \Delta \backslash \{ \alpha_m\}$, then either $1\leq i< j \leq m-1$ or $m+1 \leq i < j \leq k$.  Either way, by Equation \eqref{E:LambdaValues} of Lemma \ref{T:InnerProdCalc}, we see that $\langle \gamma_B, \alpha_{ij} \rangle$ is the sum of at most one $-1$ and 0's otherwise.  Therefore, for any $0 \leq d \leq \min \{m,r\}$, we have shown that $\langle \gamma_B, \alpha \rangle \in \{0, -1 \}$  for any $\alpha \in R_P^+$.  By the uniqueness of $\mathbf{d}_B$ in Theorem \ref{T:Woodward}, we thus have that $d$ corresponds to $\mathbf{d}_B$ under the Peterson comparison formula.

We now verify our formula for $w_0^{P'_d}$, for which we must compute $\Delta_{P'_d} = \{ \alpha \in \Delta_P \mid \langle \gamma_B, \alpha \rangle = 0\}.$  First suppose that $d=0$, in which case $\mathbf{d}_B = (0, \dots, 0)$ so that $\gamma_B = 0 \in Q^\vee$ as well.  Therefore, $\langle \gamma_B, \alpha \rangle = 0$ for all $\alpha \in \Delta_P$, and so $\Delta_{P'_d} = \Delta_P$. Now suppose $0 < d \leq \min \{m,r\}$, in which case Equation \eqref{E:LambdaValues} of Lemma \ref{T:InnerProdCalc} says that 
\begin{equation*}
\Delta_{P'_d} = \{ \alpha \in \Delta_P \mid \langle \gamma_B, \alpha \rangle = 0\}= \Delta_P \backslash \{ \alpha_{m-d}, \alpha_{m+d} \} = \Delta \backslash \{\alpha_{m-d}, \alpha_m, \alpha_{m+d}\}.
\end{equation*}
Moreover, note that this formula also matches $\Delta_{P'_d} = \Delta_P$ when $d=0$. Therefore, for any $0 \leq d \leq \min \{m,r\}$, we see that $W_{P'_d}$ is generated by the simple roots corresponding to $ \Delta \backslash \{\alpha_{m-d}, \alpha_m, \alpha_{m+d}\}$; namely the generators of the parabolic subgroups $W_{P_{1,m-d-1}},$ $W_{P_{m-d+1,m-1}},$ $W_{P_{m+1,m+d-1}},$ and $W_{P_{m+d+1,k}}$.  (Recall that $W_{P_{ij}}$ is defined to be trivial if $i >j$, so in the $d=0$ case we really only have two nontrivial parabolic subgroups, namely $W_{P_{1,m-1}}$ and $W_{P_{m+1,k}}$.  A similar comment applies when $d=\min\{m,r\}$.)
Finally, observe that each of the subgroups $W_{P_{ij}}$ listed above acts on a disjoint subset of $[n]$.  Therefore, the longest element in $W_{P'_d}$ will equal the product of the longest elements in each of these $W_{P_{ij}}$, as claimed in \eqref{eq:Coxprod}. 
\end{proof}

We remark that the hypothesis $0 \leq d \leq \min\{r,m\}$ on the degree $d$ appearing in Theorem \ref{thm:comparison}, which is formally necessary in the proof of the key Proposition \ref{T:WoodwardGr}, does not impose any actual constraints on the quantum Littlewood-Richardson coefficients to which Theorem \ref{thm:comparison} applies.  Indeed, by degree considerations, if $c_{\lambda, \mu}^{\nu,d} \neq 0$ for some $\lambda, \mu, \nu \subseteq R_{r}$ and $d \in \ZZ$, then in fact $0 \leq d \leq \min\{r,m\}$.  

The proof of Theorem \ref{thm:comparison} now follows directly from Proposition \ref{T:WoodwardGr}.

\begin{proof}[Proof of Theorem \ref{thm:comparison}]
Let $u,v,w \in \mGrass$, and fix an integer $0 \leq d \leq \min \{ m,r\}$.  Apply Proposition \ref{T:WoodwardGr} to obtain the stated formula for $\d=\d_B$ in Theorem \ref{thm:comparison}.  Recall from Proposition \ref{T:WoodwardGr} that $w_0^{P'_d}=w_0^{P_{1, m-d-1}}\cdot w_0^{P_{m-d+1, m-1}}\cdot w_0^{P_{m+1,m+d-1}}\cdot w_0^{P_{m+d+1,k}}.$
Applying Equation \eqref{E:w0Pij} to these $w_0^{P_{ij}}$, each of which acts on a disjoint subset of $[n]$, we thus have
\[w_0^{P'_d} 
 = [ m-d \cdots 1\ \textcolor{red}{|}\ m \cdots m-d+1\ \textcolor{blue}{|}\ m+d \cdots m+1\ \textcolor{red}{|}\ n \cdots m+d+1],
\]
which is the stated formula for $w_0^{P'_d}$ in Theorem \ref{thm:comparison}.  The equality of quantum Littlewood-Richardson coefficients via $\PC$ then follows directly from part (2) of Theorem \ref{T:Woodward}. 
\end{proof}


\section{Composing the affine Littlewood-Richardson comparisons}\label{sec:AffComp}

In this section, we formalize the correspondence $\LJFl$ on those affine Littlewood-Richardson coefficients in the image of $\LJGr$. In the Main Theorem, we apply the correspondence $\LJGr$ to quantum Littlewood-Richardson coefficients indexed by partitions of the form $\lambda_u$ for some Grassmann permutation $u \in \mGrass$. The partial inverse to the map $\tilde \lambda^{\downarrow}$ from Section \ref{sec:LJFl}, which we define on Grassmann permutations in Section \ref{sec:invertGrass}, will thus be useful for studying the composition $\LJFl \circ \LJGr$.  As we will see in Section \ref{sec:LJFlcomb}, however, the third $k$-bounded partition indexing the affine Littlewood-Richardson coefficients appearing in the image of $\LJGr$ corresponds to a permutation with two descents.  In Section \ref{sec:2descents}, we thus extend the results from Section \ref{sec:invertGrass} to permutations having two descents.  The main result of this section is then Theorem \ref{thm:LJFlcomb} in Section \ref{sec:LJFlcomb}, which formally defines $\LJFl$ and provides an inverse to the relation $\LJFl^{-1}$ from Theorem \ref{thm:lstodarephrase}.

\subsection{The map $\tilde \lambda^{\downarrow}$ on Grassmann permutations}\label{sec:invertGrass}

Recall the function $\tilde \lambda^{\downarrow}: S_n \rightarrow \mathcal P^k$ from Section \ref{sec:LJFl}, which maps $w \mapsto \tilde \lambda^{\downarrow}_w$ by first transposing the $k$-bounded partition $\zeta(w)$ defined in \eqref{eq:perm2part} to obtain $\tilde \lambda_w$, and then removing all possible $k$-rectangles to yield the irreducible $k$-bounded partition $\tilde \lambda^{\downarrow}_w$.

\begin{lemma}\cite[Lemma 7.5]{BMPSadvances}
\label{le:onedescent}
If $u \in \jGrass$, then 
\begin{equation}\label{eq:onedescent}
\left( \tilde \lambda_u^{\downarrow} \right)'= \big(n-j-\Inv_1(u), \ldots,n-j-\Inv_j(u) \big)\subseteq R_{n-j}.
\end{equation}
\end{lemma}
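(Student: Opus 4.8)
The plan is to trace the definitions through carefully and reduce everything to the explicit formula for $\zeta(u)$ together with the combinatorics of removing $k$-rectangles. First I would record the one-line notation for a Grassmann permutation $u \in \jGrass$ with its unique descent at position $j$: its inversion sequence is
\[
\Inv_i(u) = \begin{cases} u_i - i - \#\{\, t \leq i : u_t < u_i \,\} & \text{more precisely, } \Inv_i(u) = u_i - \#\{\, t \leq i\,\} \text{ counting smaller later entries}, \end{cases}
\]
but the cleaner statement I actually need is that since $u$ has a single descent at $j$, the values $u_1 < \cdots < u_j$ are increasing, the values $u_{j+1} < \cdots < u_n$ are increasing, and $\Inv_i(u) = \#\{\, t > i : u_t < u_i\,\}$ depends only on how many of the $u_{j+1}, \dots, u_n$ fall below $u_i$ (for $i \le j$). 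Equivalently, $\Inv_i(u) = (u_i - i) - \#\{\text{entries among }u_1,\dots,u_{i-1}\text{ less than }u_i\}$; since those are all less than $u_i$, this collapses to $\Inv_i(u) = u_i - i$ for $i \le j$. I would state this as a preliminary observation and relate it to the partition $\lambda_u$ via \eqref{eq:permlamdef}, so that $\Inv_i(u)$ for $i \in [j]$ records the parts of $\lambda_u$ read in a fixed order.

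Next I would unwind the definition of $\zeta$ and $\tilde\lambda$ from \eqref{eq:perm2part}. We have $\zeta_i(w) = \Inv_i(w_0 w) + \binom{n-i}{2}$, and for a Grassmann permutation $u$ one computes $\Inv_i(w_0 u)$ directly: $w_0 u$ reverses all the values, so $\Inv_i(w_0 u) = \#\{\, t > i : (w_0 u)_t > (w_0 u)_i\,\} = (n - i) - \Inv_i(u) - (\text{correction for }t\le i)$; being careful, for $i \le j$ this should give $\Inv_i(w_0 u) = (n - u_i)$ or a similarly clean expression. The key point is that $\zeta_i(u)$ for $i > j$ becomes a constant (namely $\binom{n-i}{2}$ plus a fixed shift), which is exactly what produces the stack of $k$-rectangles, while for $i \le j$ it carries the genuine information $n - j - \Inv_i(u)$ after transposition. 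So the strategy is: (1) compute $\zeta(u)$ explicitly as a partition, identifying the ``rectangular tail'' part and the ``essential'' part; (2) transpose to get $\tilde\lambda_u$; (3) show that removing all $k$-rectangles from $\tilde\lambda_u$ leaves precisely the transpose of $\big(n-j-\Inv_1(u), \dots, n-j-\Inv_j(u)\big)$.

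The main obstacle I anticipate is step (3): bookkeeping which $k$-rectangles $R_i$ are removable from $\tilde\lambda_u$ and verifying that what remains is irreducible and equals the claimed partition. This is the part that genuinely uses the structure of $k$-Schur combinatorics rather than just linear algebra on permutations. I would handle it by working on the transposed side: since transposing swaps $R_i \leftrightarrow R_{n-i}$ and swaps ``$\cup$'' with a column-wise operation, removing $k$-rectangles from $\tilde\lambda_u$ corresponds to stripping full columns of height $n-i$ appropriately from $\zeta(u)$; the claim is that after doing so the column-reading of the essential part is $(n-j-\Inv_1(u),\dots,n-j-\Inv_j(u))$, which has at most $j$ columns and largest part at most $n - j$, hence lies in $R_{n-j}$ and is automatically irreducible (no $k$-rectangle fits). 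A clean way to organize this is to note that $\zeta_i(u) = \binom{n-i}{2} + \Inv_i(w_0u)$ and that $\binom{n-i}{2}$ is itself the sequence whose transpose is the ``staircase'' $\bigcup_{i=1}^{n-1} R_i$ or a related maximal union of rectangles; subtracting it off (on the transposed side, removing those rectangles) is exactly the $\downarrow$ operation, leaving $\big(\Inv_i(w_0 u)\big)'$, which one then rewrites as $\big(n - j - \Inv_i(u)\big)'$ using the Grassmann structure. I would cite \cite[Lemma 7.5]{BMPSadvances} is the statement itself, so the proof should mirror whatever argument is given there, likely an induction on the number of removable rectangles or a direct bijective description of the columns.
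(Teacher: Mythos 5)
First, a framing note: the paper does not prove this lemma at all --- it is quoted verbatim from \cite[Lemma 7.5]{BMPSadvances} --- so your argument has to stand on its own rather than mirror anything in this paper. Your preliminary reductions are correct (despite the garbled first display): for $u\in\jGrass$ one has $\Inv_i(u)=u_i-i$ for $i\le j$ and $\Inv_i(u)=0$ for $i>j$, and $\Inv_i(w_0u)=n-i-\Inv_i(u)$ for every $i$, hence $\zeta_i(u)=\binom{n-i}{2}+\Inv_i(w_0u)=\binom{n-i+1}{2}-\Inv_i(u)$.

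The genuine gap is your step (3), which you flag as the main obstacle but then sketch incorrectly. The staircase $\bigl(\binom{n-1}{2},\binom{n-2}{2},\dots\bigr)$ is \emph{not} the conjugate of a union of $k$-rectangles: the $i^{\text{th}}$ consecutive difference of $\sum_{l\in S}(R_l)'$ is $n-i$ or $0$, whereas $\binom{n-i}{2}-\binom{n-i-1}{2}=n-i-1$. So ``subtracting $\binom{n-i}{2}$ on the transposed side is exactly the $\downarrow$ operation'' fails; and even if it were legitimate, it would leave $\Inv_i(w_0u)=n-i-\Inv_i(u)$, with nonzero entries $n-i$ also for $i>j$, not the claimed $n-j-\Inv_i(u)$ supported on $i\le j$. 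The rectangles removed depend on the descent position $j$, and identifying them is the whole content of the lemma: setting $\nu:=\bigl(n-j-\Inv_1(u),\dots,n-j-\Inv_j(u)\bigr)$, one checks directly from $\zeta_i(u)=\binom{n-i+1}{2}-\Inv_i(u)$ that $\zeta(u)=\nu+\sum_{l\in[k]\setminus\{j\}}(R_l)'$, because $\sum_{l\ge i,\,l\neq j}(n-l)$ equals $\binom{n-i+1}{2}-(n-j)$ for $i\le j$ and $\binom{n-i+1}{2}$ for $i>j$. Conjugating gives $\tilde\lambda_u=\nu'\cup\bigcup_{l\neq j}R_l$, and then your irreducibility observation (no $R_l$ fits inside $\nu'\subseteq(j^{\,n-j})$ unless $\nu'=R_j$ itself) finishes the argument --- note that the excluded case really occurs for $u=\mathrm{id}$, where $\tilde\lambda_u^{\downarrow}=\emptyset$, so you genuinely need $\Inv_j(u)\ge 1$, i.e.\ an actual descent at $j$. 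Without pinning down the set $[k]\setminus\{j\}$ and verifying this identity of partitions, the proposal does not establish the stated formula.
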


For any $u \in \jGrass$, comparing Equations \eqref{eq:lampermdef} and \eqref{eq:permlamdef} with $m=j$, one sees that an alternate formula for the partition $\lambda_u \subseteq R_{n-j}$ is given by 
\begin{equation}\label{eq:lampermalt}
\lambda_u = (\Inv_{j}(u),\ldots, \Inv_{1}(u)).
\end{equation}
Momentarily denote by $\lambda := \lambda_u = (\lambda_1, \dots, \lambda_j)$, so that $\lambda_{j-i+1} = \Inv_i(u)$ as in formula \eqref{eq:lampermalt}.  Comparing the righthand sides of Equations \eqref{eq:complement} and \eqref{eq:onedescent} with $m=j$, we thus see that 
 the complement of $\lambda_u$ in $R_{n-j}$ is given by 
$\lambda_u^{\vee_{n-j}}=\left( \tilde \lambda_u^{\downarrow} \right)'.$
We rewrite this expression as
\begin{equation}\label{eq:thetainverse}
\tilde \lambda_u^{\downarrow} = \left(\lambda_u'\right)^{\vee_j},
\end{equation}
where we have used the fact that $(\mu^{\vee_{n-j}})' = (\mu')^{\vee_{j}}$ for any $\mu \subseteq R_{n-j}$.

Although in general many different permutations in $S_n$ can map onto the same $k$-bounded partition via the map $\tilde \lambda^{\downarrow}$, observation \eqref{eq:thetainverse} provides a partial inverse for $\tilde \lambda^{\downarrow}$ when restricted to Grassmann permutations.

\begin{remark}
\label{re:thetainverse}
Transposing \eqref{eq:onedescent} from Lemma \ref{le:onedescent}, the image of the restriction 
\begin{equation*}
\tilde \lambda^{\downarrow}: \jGrass \rightarrow \mathcal P^k
\end{equation*}
is the set of $k$-bounded partitions contained in the rectangle $R_j$. 
Moreover, when restricted to this image, the map $\tilde \lambda^{\downarrow}$ is invertible by  \eqref{eq:thetainverse}.  That is, given any $\eta \in \mathcal P^k$ such that $\eta \subseteq R_j$, 
 the inverse $\varphi_j$ of $\tilde \lambda^{\downarrow}$ is defined by 
\begin{equation}\label{eq:thetaGrass}
\varphi_j\left( \eta \right) =  w_{(\eta^{\vee_j})'} = w'_{\eta^{\vee_j}} \in \jGrass
\end{equation}
using the notation from \eqref{eq:lampermdef}, and where the second equality follows from \eqref{eq:conjugate}.
When the ambient rectangle containing the given $k$-bounded partition $\eta \subseteq R_j$ is understood from context, we simply write $\varphi(\eta)$.
\end{remark}

We demonstrate the inverse relationship of $\tilde \lambda^{\downarrow}$ and $\varphi$ in the example below.

\begin{example}\label{ex:thetainverse}
Consider the permutation $v=[25134] \in S^2_5$ from Example \ref{ex:LJFlfav}, in which case $j=2$ and $n=5$.  To review the construction of $\tilde \lambda^{\downarrow}_v$ from Section \ref{sec:LJFl}, compute via \eqref{eq:perm2part} that $\zeta(v) = (3,0,2,1)+(6,3,1,0)=(9,3,3,1)$ so that $\tilde \lambda_v =\zeta(v)'= (4,3,3,1,1,1,1,1,1) = R_4\cup R_3 \cup R_1 \cup (1,1)$ which then reduces to $\tilde \lambda^{\downarrow}_v =  (1,1,0) \subset R_2$, as claimed in Example \ref{ex:LJFlfav}. Alternatively, computing $\Inv(v) = (1,3)$ and applying Lemma \ref{le:onedescent} with $n-j=3$, we have $\left( \tilde \lambda_v^{\downarrow}\right)' = (3-1,3-3) = (2,0) \subset R_3$, which indeed agrees with our calculation directly from the definition of $\tilde \lambda^{\downarrow}$.

Conversely, given the $4$-bounded partition $\mu = (1,1,0) \subset R_2$, taking the complement yields $\mu^{\vee_2}=(2,1,1)$, and then transposing we have $(\mu^{\vee_2})' = (3,1) \subset R_3$.  Recording the permutation corresponding to the partition $(3,1) \subset R_3$ via \eqref{eq:lampermdef} then gives $w_{(\mu^{\vee_2})'} = [25134] \in S^2_5$.  We thus see that indeed $\varphi(\mu) = v$, illustrating Remark \ref{re:thetainverse}.
\end{example}

\subsection{The map $\tilde \lambda^{\downarrow}$ on permutations with two descents}\label{sec:2descents}

Although the inverse $\varphi$ in Remark \ref{re:thetainverse} is only defined when $\tilde \lambda^{\downarrow}$ is applied Grassmann permutations, in this section we extend the results from Section \ref{sec:invertGrass} to permutations having two descents.  We begin by recording an algorithm for writing a permutation with two descents as the product of two Grassmann permutations; see also \cite[Lemma 2.1]{Richmond12}.

\begin{lemma}\label{le:constructfactors}
Fix a pair of integers $1 \leq a < b < n$. A permutation $w \in S_n$ satisfies $D(w)= \{a,b\}$ if and only if $w=w^2w^1$ for two Grassmann permutations $w^2 \in S^a_n$ and $w^1 \in S^b_n$ such that $w^1(i)=i$ for all $i \in [a]$.
\end{lemma}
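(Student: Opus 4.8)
The plan is to prove both directions of the equivalence by directly working with one-line notation and the combinatorics of descents. For the reverse direction, suppose $w = w^2 w^1$ with $w^2 \in S^a_n$, $w^1 \in S^b_n$, and $w^1(i) = i$ for all $i \in [a]$. I would first observe that since $w^1$ is a Grassmann permutation with its unique descent at $b$ and fixes $[a]$, its window looks like $[1 \cdots a \mid w^1(a+1) < \cdots < w^1(b) \mid w^1(b+1) < \cdots < w^1(n)]$ where the block after position $b$ is increasing. Then I would compute $w = w^2 w^1$ by tracking how $w^2$ (which is increasing on $[1,a]$ and on $[a+1,n]$, with its unique descent at $a$) acts. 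The key point is that composition with $w^2$ rearranges the values but preserves monotonicity within each of the blocks $[1,a]$, $[a+1,b]$, $[b+1,n]$ of positions, because $w^1$ maps each such block of positions into a set of positions on which $w^2$ is increasing — concretely $w^1([1,a]) = [1,a]$, $w^1([a+1,b])$ and $w^1([b+1,n])$ are subsets of $[a+1,n]$, and $w^2$ is monotone on $[1,a]$ and on $[a+1,n]$. Hence $w$ is increasing on each of the three blocks, so $D(w) \subseteq \{a, b\}$. The remaining work is to check that both $a$ and $b$ are genuine descents (not just potential ones), which I would do by a value-counting argument: at position $b$, the $a+1$ smallest values $1, \dots, a$ do not all appear in positions $\le b$ if $b$ is a genuine descent location of $w^1$, forcing $w(b) > w(b+1)$; similarly at $a$.

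For the forward direction, suppose $D(w) = \{a, b\}$. I would construct $w^1$ and $w^2$ explicitly. Define $w^1$ to be the Grassmann permutation that fixes $[a]$ pointwise and sorts the remaining values: on positions $a+1, \dots, b$ place the values $\{w(a+1), \dots, w(b)\}$ in increasing order, and on positions $b+1, \dots, n$ place the values $\{w(b+1), \dots, w(n)\}$ in increasing order — but one must be careful, since we need $w^1 \in S^b_n$, i.e. at most one descent, occurring at $b$. The cleaner construction: let $S = \{1, \dots, a\}$, and for $i > a$ let $w^1$ arrange so that $w^1(\{a+1,\dots,b\})$ and $w^1(\{b+1,\dots,n\})$ partition $\{a+1,\dots,n\}$ appropriately; then define $w^2 = w (w^1)^{-1}$ and verify $w^2 \in S^a_n$. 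The verification that $w^2$ has at most one descent at position $a$ is the crux: one shows $w^2$ restricted to positions $1,\dots,a$ and to positions $a+1,\dots,n$ is increasing, using that $w$ is increasing on the three blocks cut out by $\{a,b\}$ and that $(w^1)^{-1}$ relabels positions block-compatibly.

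I expect the main obstacle to be pinning down the exact definition of $w^1$ in the forward direction so that it simultaneously (i) has its unique descent exactly at $b$ (not earlier, which requires the value sets in the two blocks $[a+1,b]$ and $[b+1,n]$ to be chosen so that, after sorting each block increasingly, the junction at $b$ is a descent and nothing internal is), (ii) fixes $[a]$, and (iii) makes $w^2 = w(w^1)^{-1}$ come out as a genuine element of $S^a_n$ rather than merely a permutation with descent set contained in $\{a\}$ possibly equal to $\emptyset$ — though in fact allowing $w^2$ to be the identity is harmless if $a \notin D(w)$, but here $a \in D(w)$ is forced, so $w^2$ must have a genuine descent at $a$, which needs checking. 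I would handle this by an explicit inversion-sequence computation: track $\Inv_i(w)$, $\Inv_i(w^1)$, $\Inv_i(w^2)$ and use that descents of a permutation are exactly the positions where the relevant monotonicity of values fails, reducing everything to elementary inequalities among the block value-sets. This is routine once the blocks are correctly identified, so I would not grind through it in full, but flag it as where all the care is needed.
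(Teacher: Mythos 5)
The genuine gap is in your ``if'' direction, at exactly the step you deferred to a ``value-counting argument.'' Your block-monotonicity observation is correct and gives $D(w)\subseteq\{a,b\}$, and the descent at $b$ is indeed automatic: $w^1(b)>w^1(b+1)$ with both values in $\{a+1,\dots,n\}$, a set of positions on which $w^2$ is increasing. But the descent at $a$ does \emph{not} follow from the stated hypotheses, so no counting argument can establish it: one needs $w^2(a)>w^2\bigl(w^1(a+1)\bigr)$, and since $w^1(a+1)$ may be much larger than $a+1$, this can fail even though $w^2$ has a genuine descent at $a$. Concretely (using the composition convention of Example \ref{ex:w2w1}, i.e.\ $(w^2w^1)(i)=w^2(w^1(i))$), take $n=5$, $a=1$, $b=3$, $w^2=[2\,1\,3\,4\,5]\in S^1_5$ and $w^1=[1\,3\,5\,2\,4]\in S^3_5$, so that $w^1(1)=1$; then $w^2w^1=[2\,3\,5\,1\,4]$ has descent set $\{3\}$, not $\{1,3\}$. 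So what your argument can actually deliver is only $b\in D(w)\subseteq\{a,b\}$; forcing $a\in D(w)$ requires extra input, e.g.\ $w^1(a+1)=a+1$ or the inequality $w^2(a)>w^2(w^1(a+1))$. Be aware that the paper's own proof is equally terse here --- it simply asserts that $D(w^2)=\{a\}$ yields $D(w^2w^1)=\{a,b\}$ --- so your sketch mirrors the paper, but the ``similarly at $a$'' step is precisely where both arguments are incomplete, and your proposed fix cannot go through as written.

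Your ``only if'' direction is essentially the paper's: the paper keeps $w_1,\dots,w_a$ and sorts $w_{a+1},\dots,w_n$ to get $w^2$, and defines $w^1$ by fixing $[a]$ and placing the values $a+1,\dots,n$ in positions $a+1,\dots,n$ in the \emph{same relative order} as $w_{a+1},\dots,w_n$. This standardization is exactly the ``cleaner construction'' you left unspecified: it makes the descents of $w^1$ in positions $>a$ coincide with those of $w$, hence only $b$, while $w^1(a)=a<w^1(a+1)$ rules out a descent at $a$; your first idea of sorting the value sets of the two position blocks $[a+1,b]$ and $[b+1,n]$ separately is not compatible with fixing $[a]$ in general. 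With this choice, $w^2=w(w^1)^{-1}$ is the paper's $w^2$, and its genuine descent at $a$ (the point you flagged) follows from $w_1<\cdots<w_a$ together with $w^2(a)=w_a>w_{a+1}\geq\min\{w_{a+1},\dots,w_n\}=w^2(a+1)$, so no inversion-sequence computation is needed.
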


\begin{proof}
If $w = [w_1 \cdots w_n] \in S_n$ has exactly two descents, say in positions $1 \leq a<b<n$, then we can write $w$ as a product of two Grassmann permutations $w=w^2w^1$ where $w^2 \in S_n^a$ and $w^1\in S_n^b$, as follows.  The window for $w^2$ is obtained by keeping  entries $w_1, \dots, w_a$ from the window for $w$, and then arranging  the entries $w_{a+1}, \dots, w_n$  into increasing order.  To construct the window for $w^1$, define $w^1_i = i$ for all $1 \leq i \leq a$ so that $w^1=[1 \cdots a\mid w^1_{a+1} \cdots w^1_n]$. Define the remaining $n-a$ entries of $w^1$ using the values $\{a+1, \dots, n\}$, but maintaining the same relative order from the window for $w$. Precisely, 
taking entries $w_{a+1}, \dots, w_n$ from the window for $w$ and re-indexing them in increasing order $w_{i_1} < \cdots < w_{i_{n-a}}$, define $w^1_{i_j}:= a+j$ for all $1 \leq j \leq n-a$. It is then straightforward to verify that $w=w^2w^1$.

Conversely, the given conditions on $w^1\in S^b_n$ imply that $w^1_{a+1} < \cdots <w^1_b$ and $w^1_{b+1} < \cdots < w^1_n$, in addition to the fact that $w^1(i)=i$ for all $i \in [a]$. Since $D(w^2)=\{a\}$ for $a<b$, we find that $D(w^2w^1)=\{a,b\}$.
\end{proof}

We illustrate Lemma \ref{le:constructfactors} with the following example.

\begin{example}\label{ex:w2w1}
Consider the permutation $w=[5 9 2 4 6 7 8 1 3]\in S_9$, which has two descents in positions $a=2$ and $b=7$. Using Lemma \ref{le:constructfactors}, define $w^2 \in S_9^2$ by rearranging the last $9-2=7$ entries of the window for $w$, and so $w^2 = [591234678]$.  To define $w^1 \in S^7_9$, by Lemma \ref{le:constructfactors} we first set $w^1_i = i$ for $i \in \{1,2\}$, and then place the elements of $\{3, \dots, 9\}$ according to the same relative ordering as the last 7 entries of the window for $w$ to obtain $w^1=[124678935]$.  Indeed $w=w^2w^1$, confirming Lemma \ref{le:constructfactors}.
\end{example}

The following lemma from \cite{BMPSadvances} counting the number of parts in the partition $\tilde \lambda^{\downarrow}_w$ will be useful in the proof of Proposition \ref{prop:inversions} below.

\begin{lemma}\cite[Lemma 7.3]{BMPSadvances}
\label{le:pain in the ass}
Let $w\in S_n$. Denote by $\Inv(w_0 w) = (I_1,\dots,I_k)$, and by $n_i$ the number of parts of size $i$ in $\tilde \lambda^{\downarrow}_w$.  Then for any $1 \leq i \leq k-1$, we have
\begin{equation}
\label{eq:multiplicities}
n_i=\begin{cases}
k-i+I_i-I_{i+1} & \text{ if } i \in D(w),\\
I_i-I_{i+1}-1 & \text{ if } i\not\in D(w).
\end{cases}
\end{equation}
\end{lemma}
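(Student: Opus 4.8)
The plan is to prove Lemma \ref{le:pain in the ass} by directly analyzing how the transpose partition $\tilde\lambda_w = \zeta(w)'$ is built from the inversion sequence $\Inv(w_0w)$, and then tracking how removing $k$-rectangles changes the multiplicities. First I would recall from \eqref{eq:perm2part} that $\zeta_i(w) = \Inv_i(w_0w) + \binom{n-i}{2} = I_i + \binom{k-i+1}{2}$ for $i \in [k]$, and set $\zeta_{k+1}(w) = 0$ to handle the last part uniformly. Since $\zeta(w)$ is a partition with $k$ parts, the transpose $\tilde\lambda_w$ has parts of size (at most) $k$, and the number of parts of $\tilde\lambda_w$ equal to exactly $i$ is $\#\{\,\ell \in [k] : \zeta_\ell(w) \geq i\} - \#\{\,\ell \in [k] : \zeta_\ell(w) \geq i+1\}$. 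The key structural observation is that the columns of $\tilde\lambda_w$ of a given height come in consecutive blocks governed by the differences $\zeta_\ell - \zeta_{\ell+1}$; I would compute that $\zeta_i(w) - \zeta_{i+1}(w) = I_i - I_{i+1} + (k-i)$, since $\binom{k-i+1}{2} - \binom{k-i}{2} = k-i$.

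Next I would translate this into a count of parts of $\tilde\lambda_w$ of each size. The partition $\tilde\lambda_w$ has $\zeta_k(w) = I_k + 0 = I_k$ parts equal to... no — I need to be careful with transposition: the number of parts of $\tilde\lambda_w$ equal to $i$ (for $1 \le i \le k-1$) equals the number of columns of $\zeta(w)$ of height exactly $i$, which is $\zeta_i(w) - \zeta_{i+1}(w)$ when we read heights appropriately; here the index convention must be matched against the statement, where $n_i$ counts parts of size $i$ and the formula distinguishes $i \in D(w)$ from $i \notin D(w)$. So the core computation is: let $\tilde n_i$ denote the number of parts of size exactly $i$ in the \emph{unreduced} partition $\tilde\lambda_w$; show $\tilde n_i = \zeta_{k-i}(w) - \zeta_{k-i+1}(w) = (I_{k-i} - I_{k-i+1}) + i$ or the analogous indexed expression, matching the roles of $i$ and $D(w)$ correctly. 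The descent condition enters because $i \in D(w)$ iff $w_i > w_{i+1}$, which by a standard fact controls whether $\Inv_i(w_0w) - \Inv_{i+1}(w_0w)$ picks up an extra $+1$ or not — this is exactly the discrepancy between the two cases in \eqref{eq:multiplicities}.

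Then I would account for the passage from $\tilde\lambda_w$ to $\tilde\lambda^{\downarrow}_w$. Removing a $k$-rectangle $R_i = (i^{k+1-i})$ deletes precisely $k+1-i$ parts of size $i$. I would argue that the unreduced $\tilde n_i$ always exceeds the reduced $n_i$ by $(k+1-i)$ times the number of copies of $R_i$ removed, and that — crucially — for $i \in D(w)$ exactly the "extra" contribution relative to the non-descent case is absorbed into a full extra $R_i$ being removable, or conversely. Concretely, one shows the reduction removes $k-i$ copies... this bookkeeping, reconciling the $k-i+1$ size of a $k$-rectangle with the $k-i$ appearing in the descent case of \eqref{eq:multiplicities}, is the delicate point. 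The cleanest route is probably to invoke the already-known Lemma \ref{le:onedescent}/Remark \ref{re:thetainverse} machinery for the structure of $\tilde\lambda^{\downarrow}$, or to use the characterization that $\tilde\lambda^{\downarrow}_w$ contains no removable $k$-rectangle to pin down $n_i$ uniquely from the $\tilde n_i$.

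The main obstacle I anticipate is the rectangle-removal bookkeeping in the final step: getting the indices and the off-by-one between $k-i$ and $k-i+1$ exactly right, and rigorously justifying that the greedy removal of all $k$-rectangles yields precisely the stated $n_i$ rather than something off by a unit in the descent case. A secondary technical point is correctly establishing the "extra $+1$" in the difference $I_i - I_{i+1}$ exactly when $i \in D(w)$; this follows from writing $\Inv_i(w_0w) = \#\{j > i : (w_0w)(i) > (w_0w)(j)\}$ and noting $(w_0w)(i) = n + 1 - w(i)$, so $\Inv_i(w_0w) = \#\{j>i : w(j) > w(i)\}$, and then comparing the sets for consecutive $i$, where the pair $(i,i+1)$ itself contributes iff $w_{i+1} > w_i$, i.e. iff $i \notin D(w)$ — I would verify the sign of this contribution lines up with \eqref{eq:multiplicities}. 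Since this lemma is quoted verbatim from \cite[Lemma 7.3]{BMPSadvances}, in the interest of self-containment I would either reproduce this argument in full or simply cite it; given the paper's stated goal of being self-contained for its \emph{new} results but citing standard background, citing is acceptable here.
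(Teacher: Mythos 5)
First, note what the paper actually does with this statement: it gives no proof at all, importing it verbatim as \cite[Lemma 7.3]{BMPSadvances} and using it as a black box in the proof of Proposition \ref{prop:inversions}. So your closing fallback of simply citing is exactly what the paper does, and there is no in-paper argument to compare against. Your sketched direct proof is nevertheless worth assessing, and its skeleton is right: the multiplicities of the unreduced partition $\tilde\lambda_w=\zeta(w)'$ are consecutive differences of $\zeta(w)$, the binomial shifts contribute $\binom{k-i+1}{2}-\binom{k-i}{2}=k-i$, and the descent condition governs the difference $I_i-I_{i+1}$ via $\Inv_i(w_0w)=\#\{j>i: w(j)>w(i)\}$.

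However, there is a genuine gap precisely at the step you flag, and your tentative guesses there are wrong. The correct bookkeeping is: the unreduced multiplicity is $\tilde n_i=\zeta_i(w)-\zeta_{i+1}(w)=I_i-I_{i+1}+(k-i)$ (your first indexing; the alternative $\zeta_{k-i}(w)-\zeta_{k-i+1}(w)$ you float is incorrect), and since removing $R_i=(i^{\,k-i+1})$ deletes exactly $k-i+1$ parts of size $i$ and touches no other part sizes, $n_i$ is the unique residue of $\tilde n_i$ modulo $k-i+1$ lying in $\{0,\dots,k-i\}$. The lemma is therefore equivalent to the claim that \emph{no} copy of $R_i$ is removed when $i\in D(w)$ and \emph{exactly one} copy when $i\notin D(w)$ --- not ``$k-i$ copies'' as you tentatively wrote. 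Establishing this needs more than the observation that the single pair $(i,i+1)$ contributes to $I_i$ iff $i\notin D(w)$: comparing the full sets shows that if $w(i)>w(i+1)$ then every $j>i+1$ counted in $I_i$ is also counted in $I_{i+1}$, so $I_i\le I_{i+1}$, while if $w(i)<w(i+1)$ then $I_i\ge I_{i+1}+1$. Combined with the trivial bounds $I_{i+1}\le k-i$ and $I_i\le k-i+1$, this gives $0\le\tilde n_i\le k-i$ in the descent case (zero rectangles removed, so $n_i=k-i+I_i-I_{i+1}$) and $k-i+1\le\tilde n_i\le 2(k-i)+1$ in the non-descent case (exactly one rectangle removed, so $n_i=I_i-I_{i+1}-1$). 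With these inequalities supplied, your outline closes into a complete and short proof; without them, the zero-versus-one rectangle count --- the heart of the off-by-$(k-i+1)$ dichotomy in \eqref{eq:multiplicities} --- is unjustified.
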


We are now able to prove the main result of this subsection, which will be critical in the proof of Theorem \ref{thm:LJFlcomb}.

\begin{prop}
\label{prop:inversions}  
Given $1 \leq a < b < n$ and two Grassmann permutations $w^2 \in S_n^a$ and $w^1 \in S_n^b$ such that $w^1(i)=i$ for all $i \in [a]$, define $w=w^2w^1 \in S_n$. Then
 \begin{equation*}
\left( \tilde \lambda_w^{\downarrow}\right) '=\left( \tilde \lambda_{w^2}^{\downarrow}\right) '+\left( \tilde \lambda_{w^1}^{\downarrow}\right) '.
\end{equation*}
\end{prop}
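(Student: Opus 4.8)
The plan is to reduce the statement to a direct comparison of inversion sequences via Lemma~\ref{le:pain in the ass}. The key point is that for a Grassmann permutation the partition $\tilde\lambda^{\downarrow}$ has a transparent description (Lemma~\ref{le:onedescent} / Remark~\ref{re:thetainverse}), so the right-hand side $\left(\tilde\lambda_{w^2}^{\downarrow}\right)'+\left(\tilde\lambda_{w^1}^{\downarrow}\right)'$ can be written out explicitly: $w^2 \in S_n^a$ contributes a partition inside $R_{a}$ after transposing, namely $\big(\Inv_1(w^2),\dots,\Inv_a(w^2)\big)$ read against the rectangle $R_{n-a}$ via Lemma~\ref{le:onedescent}, while $w^1 \in S_n^b$ with $w^1(i)=i$ for $i\in[a]$ contributes a partition supported (after transposing) only in rows $a+1,\dots,b$, since $\Inv_i(w^1)=0$ for $i\in[a]$. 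So the two transposed partitions have essentially disjoint ``active'' index ranges, and their sum is controlled coordinate-by-coordinate.

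\textbf{Main steps, in order.} First I would fix notation: write $\Inv(w_0w)=(I_1,\dots,I_k)$, and similarly $\Inv(w_0w^2)$, $\Inv(w_0w^1)$, and record that $D(w)=\{a,b\}$, $D(w^2)=\{a\}$, $D(w^1)=\{b\}$ by Lemma~\ref{le:constructfactors}. Second, I would compute the multiplicities $n_i$ of parts of size $i$ in each of the three partitions $\tilde\lambda_w^{\downarrow}$, $\tilde\lambda_{w^2}^{\downarrow}$, $\tilde\lambda_{w^1}^{\downarrow}$ using the two-case formula \eqref{eq:multiplicities} of Lemma~\ref{le:pain in the ass}; since passing to the transpose converts ``multiplicity of part $i$'' into ``the $i$-th column has that many boxes,'' i.e.\ into the difference of consecutive parts of the transposed partition, it suffices to check the additivity $n_i(w)=n_i(w^2)+n_i(w^1)$ on the level of these multiplicities for every $i\in[k-1]$, together with matching the total number of parts (the $i=k$ or ``overflow'' bookkeeping). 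Third — and this is the heart — I would establish the inversion-count identity relating $\Inv(w_0w)$ to $\Inv(w_0w^2)$ and $\Inv(w_0w^1)$. The relation $w=w^2w^1$ with $w^1$ fixing $[a]$ and $w^2$ sorting the tail means $w$ and $w^2$ agree in their first $a$ window entries, while the relative order of the last $n-a$ entries of $w$ matches that of $w^1$; translating this through left-multiplication by $w_0$ and unpacking $\Inv_i$ as $\#\{j>i: (w_0w)_i>(w_0w)_j\}$ yields, for each $i$, an expression of $I_i$ as a sum of the corresponding quantities for $w^2$ and $w^1$ (with a constant correction in the range $i\le a$ absorbed by the descent-dependent terms in \eqref{eq:multiplicities}). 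Finally, I would assemble: additivity of the $n_i$ gives additivity of consecutive differences of the transposed partitions, hence of the transposed partitions themselves once the anchoring (largest part / number of rows) is checked, which is exactly the claimed identity.

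\textbf{Expected main obstacle.} The delicate part is the inversion bookkeeping in the index range $1\le i\le a$, where $w^1$ is the identity but $w^2$ is not and the descent structures of $w$, $w^2$, $w^1$ differ. Here the naive ``$\Inv_i(w_0w)=\Inv_i(w_0w^2)+\Inv_i(w_0w^1)$'' need not hold on the nose; rather, the discrepancy is a fixed linear-in-$i$ term coming from the fact that the tail values $\{a+1,\dots,n\}$ sit below the block $\{w_1,\dots,w_a\}$ in $w$ but are sorted in $w^2$. The trick will be to observe that this discrepancy is precisely cancelled by the difference between the two cases of formula \eqref{eq:multiplicities} applied at $i=a\in D(w)\cap D(w^2)$ versus $a\notin D(w^1)$ — that is, the ``$k-i$'' correction term in Lemma~\ref{le:pain in the ass} is doing exactly the job of absorbing the sorting of the tail. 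Making this cancellation precise, and checking the boundary indices $i=a$ and $i=b$ carefully, is where the real work lies; everything else is then a routine telescoping of differences of partition parts. An alternative, possibly cleaner, route would be to avoid Lemma~\ref{le:pain in the ass} entirely and argue directly: use \eqref{eq:thetainverse} (i.e.\ $\tilde\lambda_u^{\downarrow}=(\lambda_u')^{\vee_j}$) to reduce both sides to statements about the partitions $\lambda_{w^2}$ and $\lambda_{w^1}$ attached to the Grassmann factors, and about how $w=w^2w^1$ decomposes the one-line notation into the two blocks $[a]$ and $\{a+1,\dots,n\}$; I would try this factorization approach first and fall back on the multiplicity computation if the complement/transpose manipulations become unwieldy.
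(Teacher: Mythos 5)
Your plan is correct and is essentially the paper's own argument: the proof of Proposition \ref{prop:inversions} likewise combines Lemma \ref{le:constructfactors}, the multiplicity formula \eqref{eq:multiplicities} of Lemma \ref{le:pain in the ass} applied to the two-descent permutation $w$, and the block-structure identities $\Inv_i(w)=\Inv_i(w^2)$ for $i\le a$, $\Inv_i(w)=\Inv_i(w^1)$ for $i>a$ (with $\Inv_i(w)=0$ for $i>b$), and then identifies the two summands via Lemma \ref{le:onedescent}. The only organizational difference is that the paper telescopes the $n_i$ relations into a closed formula for $\left(\tilde\lambda_w^{\downarrow}\right)'$ (using irreducibility to exclude parts of size $k$) rather than checking additivity of the multiplicities index by index, which is the same computation; the cancellation you worry about is automatic, since the only failure of additivity for $\Inv_i(w_0\,\cdot)$ is the uniform term $n-i$, absorbed by the $-1$ versus $k-i$ corrections in \eqref{eq:multiplicities} at each index.
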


\begin{proof}
Given $w=w^2w^1$ as described, let $\eta = \tilde \lambda^{\downarrow}_w$.  Denote by $n_i$ the number of parts of size $i$ in the partition $\eta$, and note that 
\begin{equation}\label{eq:nitranspose}
n_i = (\eta')_i - (\eta')_{i+1}.
\end{equation}
Since $\eta$ is irreducible, we also have $l(\eta') <k$. Lemma \ref{le:constructfactors} implies that $D(w)=\{a,b\}$ and thus $\Inv_i(w)=0$ for all $i>b$.  
If we denote by $\Inv(w_0 w) = (I_1,\dots,I_k)$, then $I_i=n-i-\Inv_i(w)$ for all $i \in [k]$.  

Combining these observations with Equation \eqref{eq:multiplicities} from Lemma \ref{le:pain in the ass} gives
\begin{align}\label{eq:ni}
n_i=
\left\{ \!\!
\begin{array}{l@{\ =\ }ll}
k-a+I_a-I_{a+1} &\  n-a-\Inv_{a}(w)+\Inv_{a+1}(w)& \text{ if } i = a, \\
k-b+I_b-I_{b+1} & \ n-b-\Inv_{b}(w) & \text{ if } i = b, \\
I_i-I_{i+1}-1 &\ \Inv_{i+1}(w)-\Inv_i(w) & \text{ if } a \neq i<b, \\
I_i-I_{i+1}-1 & \ 0 & \text{ if } i > b.
\end{array}\right.
\end{align}
Combining \eqref{eq:nitranspose} and \eqref{eq:ni} for $i \geq b$, we see that $(\eta')_i = 0$ for all $i > b$, and 
\begin{equation}\label{eq:nb}
(\eta')_b =n-b-\Inv_{b}(w).
\end{equation}
 Comparing \eqref{eq:nitranspose} and \eqref{eq:ni} for $i=b-1$, we see that $ n_{b-1}=(\eta')_{b-1} - (\eta')_{b} =   \Inv_{b}(w)-\Inv_{b-1}(w)$, in which case we can use \eqref{eq:nb} to obtain 
$(\eta')_{b-1} = n-b-\Inv_{b-1}(w).$
Iterating this argument, we obtain
\begin{equation}\label{eq:ni>a}
(\eta')_{j} = n-b-\Inv_{j}(w)
\end{equation}
for all $a+1 \leq j \leq b$.  Now combining \eqref{eq:nitranspose} with $i=a$ and \eqref{eq:ni>a} with $j=a+1$, we have $n_a  =  (\eta')_a - (\eta')_{a+1} =  (\eta')_{a}- n+b+\Inv_{a+1}(w).$  Equivalently, comparing this expression for $n_a$ with \eqref{eq:ni} for $i=a$, we have
\begin{equation*}
(\eta')_{a}=2n-a-b-\Inv_{a}(w).
\end{equation*}
Now iterating this argument instead, we obtain 
\begin{equation}\label{eq:ni<a}
(\eta')_{j}=2n-a-b-\Inv_{j}(w)
\end{equation}
for all $1 \leq j \leq a$.  

Putting Equations \eqref{eq:ni>a} and \eqref{eq:ni<a} together, we have
\begin{equation}\label{eq:eta'}
\eta'=
( 2n-a-b-\Inv_{1}(w),\ldots,2n-a-b-\Inv_{a}(w),
n-b-\Inv_{a+1}(w),\ldots,n-b-\Inv_{b}(w)).
\end{equation}
Since $w^1(i)=i$ for all $i \in [a]$, we also have $\Inv_i(w^1)=0$ for all $i \in [a]$.
 Since $a<b$, we may express \eqref{eq:eta'} as a sum 
\begin{align}
\eta' &=(n-a-\Inv_{1}(w^2),\ldots,n-a-\Inv_{a}(w^2))  \nonumber \\
\phantom{\eta'} & + (n-b-\Inv_{1}(w^1),\ldots,n-b-\Inv_{b}(w^1)). \label{eq:eta'sum}
\end{align}
Lemma~\ref{le:onedescent} applied to \eqref{eq:eta'sum} now says that
$\left( \tilde \lambda_w^{\downarrow}\right) ' =\eta'=\left( \tilde \lambda_{w^2}^{\downarrow}\right) '+\left( \tilde \lambda_{w^1}^{\downarrow}\right) '$, as desired.
\end{proof}

We illustrate Proposition \ref{prop:inversions} on the permutation from Example \ref{ex:w2w1}.

\begin{example}
 Recall from Example \ref{ex:w2w1} that $w=[5 9 2 4 6 7 8 1 3]=w^2w^1$ for $w^2 = [591234678] \in S^2_9$ and $w^1=[124678935] \in S^7_9$.  By Lemma \ref{le:onedescent}, $\left( \tilde \lambda^{\downarrow}_{w^2} \right)' = (7-4,7-7)=(3,0) \subset R_7$ and similarly,  $\left( \tilde \lambda^{\downarrow}_{w^1} \right)' = (2,2,1,0,0,0,0) \subset R_2$. Taking the sum (and omitting all trailing zeros), we have $\left( \tilde \lambda^{\downarrow}_{w^2} \right)' +\left( \tilde \lambda^{\downarrow}_{w^1} \right)'  = (5,2,1) \in \mathcal P^8$. 

Now compute $\left( \tilde \lambda^{\downarrow}_{w} \right)' $ directly from the definition as follows.  Since
$w_0w= [5 1 8 6 4 3 2 9 7]$, then $\Inv(w_0w)=(4,0,5,3,2,1,0,1)$ and so $\zeta(w) = \Inv(w_0w)+(28,21,15,10,6,3,1,0)=(32,21,20,13,8,4,1,1)$.  Transposing and removing the $k$-rectangles $R_1 \cup R_3 \cup R_4 \cup R_5 \cup R_6 \cup R_8$, we obtain $\left( \tilde \lambda^{\downarrow}_{w} \right) =(3,2,1,1,1)$.  Therefore, $\left( \tilde \lambda^{\downarrow}_{w^2} \right)' =(5,2,1) = \left( \tilde \lambda^{\downarrow}_{w^2} \right)' +\left( \tilde \lambda^{\downarrow}_{w^1} \right)'$, confirming Proposition \ref{prop:inversions}.
\end{example}

\subsection{The correspondence $\LJFl$}\label{sec:LJFlcomb}

The goal of this section is to provide an inverse to the correspondence from Theorem \ref{thm:lstodarephrase} on the image of $\LJGr$. We begin by formalizing some notation that will be useful both in Theorem \ref{thm:LJFlcomb} defining $\LJFl$ below, as well as later in Section \ref{sec:proof}. 

Given $\nu \subseteq R_r$ and $d \in \ZZ_{\geq 0}$, 
recall that $\nu \oplus d$ is defined to be the partition obtained by adding $d$ different $n$-rim hooks to $\nu$ such that all $d$ heads lie in column $r$.

\begin{definition}
\label{def:etas} 
Let $\nu \subseteq R_r$ and $d \in \ZZ_{\geq 0}$.
\begin{enumerate}
\item Define $t:=\diag_0((\nu\oplus d)^{\vee_r})$. Note that the partition $\nu \oplus d$ may not be a subset of the rectangle $R_r$, so $(\nu \oplus d)^{\vee_r}$ here denotes the complement of the portion of the shape $\nu \oplus d$ which is contained in $R_r$, by abuse of notation.
\item Subdivide $\nu \oplus d = (\eta_t^1, \eta_t^2)$ into a pair of partitions such that $\eta_t^1$ consists of the bottom $m-t$ rows of $\nu \oplus d$; that is, $l(\eta_t^1) =m-t$. When $t$ is understood, we simply write $(\eta^1,\eta^2)$.
\end{enumerate}
\end{definition}

\begin{remark}
\label{re:diagdist}
For $\nu \subseteq R_r$ and $d \in \ZZ_{\geq 0}$, let 
$\delta:=\diag_0(\nu^{\vee_r})$ and $t:=\diag_0((\nu\oplus d)^{\vee_r})$.
Since $\delta$ is the diagonal distance from the upper right corner of $R_r$ to the boundary of $\nu$,
and $t$ is the distance from the same corner to the boundary of $\nu\oplus d$, we have that $\delta=d+t$ and 
$0 \leq t \leq \delta$.
\end{remark}

The integer $t$ and partition $\nu \oplus d = (\eta^1, \eta^2)$ are illustrated in Examples
\ref{ex:LJFlcombfav} and \ref{ex:nuoplusd} below. 
We are now prepared to state and prove the main result of Section \ref{sec:AffComp}.

\begin{theorem}\label{thm:LJFlcomb}  
For any partitions $\lambda,\mu,\nu\subseteq R_r$ and any $d \in \ZZ_{\geq 0}$ such that
$|\lambda|+|\mu|=|\nu|+nd$, we have
\begin{equation}\label{eq:LJFlcomb}
C_{\lambda, \mu}^{\nu\oplus d, (k)}
\xlongequal{\LJFl} c_{\varphi_r(\lambda),\varphi_r(\mu)}^{\varphi_{r-t}(\eta^2)\varphi_{r+t}(\eta^1),\t}\,,
\end{equation}
where $t=\diag_0((\nu\oplus d)^{\vee_r})$ and $\nu \oplus d=(\eta^1,\eta^2)$, and 
$\t=(0^{r-t},1,2,\ldots,t,\ldots,2,1,0^{m-t}).$
 Moreover, this correspondence is inverse to $\LJFl^{-1}$ from Theorem \ref{thm:lstodarephrase}.
\end{theorem}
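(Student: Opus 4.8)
The plan is to deduce Theorem~\ref{thm:LJFlcomb} from Theorem~\ref{thm:lstodarephrase} by producing permutations that realize the affine coefficient $C_{\lambda,\mu}^{\nu\oplus d,(k)}$ exactly. Set $u:=\varphi_r(\lambda)$ and $v:=\varphi_r(\mu)$, both in $S_n^r$, and set $w^2:=\varphi_{r-t}(\eta^2)\in S_n^{r-t}$, $w^1:=\varphi_{r+t}(\eta^1)\in S_n^{r+t}$, $w:=w^2w^1$, and $\d:=\t$. Before these objects are even meaningful one must check that $\eta^1\subseteq R_{r+t}$, that $\eta^2\subseteq R_{r-t}$, and that $w^1(i)=i$ for all $i\in[r-t]$, so that Lemma~\ref{le:constructfactors} and Proposition~\ref{prop:inversions} apply to $w=w^2w^1$. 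Every part of $\nu\oplus d$ is at most $r$ (each $n$-rim hook is anchored with its head in column $r$, so its body lies weakly to the west), which handles the part bounds for $\eta^1,\eta^2$ and the row bound for $\eta^1$; the remaining conditions reduce to the single inequality $(\nu\oplus d)_{m-t}\ge r-t$, which is exactly what $\diag_0((\nu\oplus d)^{\vee_r})=t$ says once translated through the diagonal-distance dictionary of Remark~\ref{re:diagdist}. Degenerate configurations---$t=0$, $t=\min\{m,r\}$, or one of $\lambda,\mu$ equal to $R_r$, where some $\varphi$ collapses to the identity and a descent disappears---should be tracked but are routine.

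With the permutations fixed, three facts must be assembled to invoke Theorem~\ref{thm:lstodarephrase}. First, $\tilde\lambda^{\downarrow}_u=\lambda$ and $\tilde\lambda^{\downarrow}_v=\mu$: this is immediate from Remark~\ref{re:thetainverse}, since $\lambda,\mu\subseteq R_r$ lie in the image of $\tilde\lambda^{\downarrow}$ restricted to $S_n^r$ and $\varphi_r$ is its inverse there. Second, $\tilde\lambda^{\downarrow}_w=\nu\oplus d$: Proposition~\ref{prop:inversions} gives $(\tilde\lambda^{\downarrow}_w)'=(\tilde\lambda^{\downarrow}_{w^2})'+(\tilde\lambda^{\downarrow}_{w^1})'$, which by Remark~\ref{re:thetainverse} equals $(\eta^2)'+(\eta^1)'$; since $\nu\oplus d=(\eta^1,\eta^2)$ is the concatenation of the bottom $m-t$ rows and the rows above them, transposing gives $(\nu\oplus d)'=(\eta^1)'+(\eta^2)'$ coordinatewise, whence $(\tilde\lambda^{\downarrow}_w)'=(\nu\oplus d)'$ and so $\tilde\lambda^{\downarrow}_w=\nu\oplus d$ as partitions (in passing this re-proves that $\nu\oplus d$ is already irreducible). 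Third, the compatibility hypothesis $\tilde\d=\D(w)-\D(u)-\D(v)$: here $\D(u)=\D(v)=\varepsilon_r$, Lemma~\ref{le:constructfactors} gives $\D(w)=\varepsilon_{r-t}+\varepsilon_{r+t}$, and evaluating the discrete second difference appearing in~\eqref{eq:tilded} on the palindrome $\t=(0^{r-t},1,2,\dots,t,\dots,2,1,0^{m-t})$ yields $\tilde\t=\varepsilon_{r-t}-2\varepsilon_r+\varepsilon_{r+t}$; the two sides coincide.

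Granting this, Theorem~\ref{thm:lstodarephrase} applied to $(u,v,w,\t)$ gives $c_{u,v}^{w,\t}\xlongequal{\LJFl^{-1}}C_{\tilde\lambda^{\downarrow}_u,\,\tilde\lambda^{\downarrow}_v}^{\tilde\lambda^{\downarrow}_w,(k)}=C_{\lambda,\mu}^{\nu\oplus d,(k)}$, which is exactly the asserted identity~\eqref{eq:LJFlcomb} with right-hand side $c_{\varphi_r(\lambda),\varphi_r(\mu)}^{\varphi_{r-t}(\eta^2)\varphi_{r+t}(\eta^1),\t}$. For the last sentence of the theorem, observe that the symbol $C_{\lambda,\mu}^{\nu\oplus d,(k)}$ determines the data $(\lambda,\mu,\nu,d)$ (the subscripts recover $\lambda$ and $\mu$, and stripping off $n$-rim hooks with heads in column $r$ recovers $\nu$ and $d$), so~\eqref{eq:LJFlcomb} defines a bona fide map $\LJFl$ on the image of $\LJGr$; the computation just performed shows $\LJFl^{-1}\circ\LJFl=\id$ there, so $\LJFl$ is inverse to the correspondence of Theorem~\ref{thm:lstodarephrase} on that image.

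The main obstacle I expect is the geometric bookkeeping relating $t=\diag_0((\nu\oplus d)^{\vee_r})$ to the rows of $\nu\oplus d$: establishing $\eta^1\subseteq R_{r+t}$, $\eta^2\subseteq R_{r-t}$, and $w^1|_{[r-t]}=\id$ all rest on correctly extracting $(\nu\oplus d)_{m-t}\ge r-t$ from the definition of $t$ via Remark~\ref{re:diagdist}, and the degenerate cases in which one of the $\varphi$'s collapses to the identity require separate attention so that the descent-set identity $\D(w)=\varepsilon_{r-t}+\varepsilon_{r+t}$ (and its match with $\tilde\t$) still holds. Once the permutations and these containments are secured, the three identities and the conclusion follow mechanically from Remark~\ref{re:thetainverse}, Proposition~\ref{prop:inversions}, and the second-difference calculation.
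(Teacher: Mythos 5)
Your proposal is correct and follows essentially the same route as the paper's proof: you define the same permutations $u=\varphi_r(\lambda)$, $v=\varphi_r(\mu)$, $w=\varphi_{r-t}(\eta^2)\varphi_{r+t}(\eta^1)$, verify $\tilde{\t}=\D(w)-\D(u)-\D(v)$ via Lemma \ref{le:constructfactors} and the second-difference computation, identify $\tilde\lambda^{\downarrow}_w=\nu\oplus d$ through Proposition \ref{prop:inversions} and Remark \ref{re:thetainverse}, and then invoke Theorem \ref{thm:lstodarephrase}. Your preliminary containment checks ($\eta^1\subseteq R_{r+t}$, $\eta^2\subseteq R_{r-t}$, $w^1|_{[r-t]}=\id$ from the definition of $t$) are handled at the same level of detail as in the paper, so no further comparison is needed.
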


\begin{proof}
By definition of $\eta$ and $t$, we have $\eta^1\subseteq R_{r+t}$ and $\eta^2\subseteq R_{r-t}$.  Denote the corresponding Grassmann permutations by $w^1:=\varphi_{r+t}(\eta^1)$ and $w^2:=\varphi_{r-t}(\eta^2)$, where the descents are in positions $r+t$ and $r-t$, respectively, according to Remark \ref{re:thetainverse}. 
By the definition of $t$, the partition $\eta^1 \subseteq R_{r+t}$ has at least $r-t$ columns of full height $m-t$.  Applying  \eqref{eq:lampermdef} to $\eta^1 \subseteq R_{r+t}$,  we thus see that 
$w^1(i)=i$ for all $1\leq i\leq r-t$.  Therefore, the permutation $w:=w^2w^1$ has two descents by Lemma \ref{le:constructfactors}; namely $\D(w)=\{r-t,r+t\}$. For the given partitions $\lambda,\mu \subseteq R_r$, denote by $u:=\varphi_r(\lambda)$ and $v:=\varphi_r(\mu)$, and note that $\D(u)=\D(v) = \{r\}$ by Remark \ref{re:thetainverse}.

We aim to apply Theorem \ref{thm:lstodarephrase}, which requires verifying a hypothesis on the relationship among these descent vectors.  Defining $\t:=(0^{r-t},1,\ldots,t-1,t,t-1,\ldots,1,0^{m-t})$, recall from \eqref{eq:tilded} that 
$\tilde \t = \sum_{i \in [k]} t_i(\varepsilon_{i-1} -2\varepsilon_i + \varepsilon_{i+1}),$ where $t_i$ denotes the $i^{\text{th}}$ entry of $\t$.  Due to the palindromic nature of $\t$, we may compute directly that $\tilde \t = \varepsilon_{r-t} - 2\varepsilon_r + \varepsilon_{r+t}$.  In addition, we have shown above that $\D(w)-\D(u)-\D(v) = (\varepsilon_{r-t} + \varepsilon_{r+t}) - \varepsilon_r -\varepsilon_r$, in which case $\tilde \t = \D(w)-\D(u)-\D(v)$.
Therefore, Theorem~\ref{thm:lstodarephrase} and Remark \ref{re:thetainverse} imply that
\begin{equation*}
c_{u,v}^{w,\t} = 
c_{\varphi_r(\lambda),\varphi_r(\mu)}^{w,\t}= C_{\tilde \lambda^{\downarrow}(\varphi_r(\lambda)),\tilde \lambda^{\downarrow}(\varphi_r(\mu))}^{\tilde \lambda^{\downarrow}_w,(k)}=  C_{\lambda,\mu}^{\tilde \lambda^{\downarrow}_w,(k)}\,.
\end{equation*}

It thus remains to prove that $\tilde \lambda^{\downarrow}_w = {\nu\oplus d}$.
Since $w^1 \in S_n^{r+t}$ and $w^2 \in S_n^{r-t}$ satisfy the conditions of Proposition \ref{prop:inversions}, we have that 
\begin{equation*}
\left( \tilde \lambda_w^{\downarrow}\right) '=\left( \tilde \lambda_{w^2}^{\downarrow}\right) '+\left( \tilde \lambda_{w^1}^{\downarrow}\right) '\, .
\end{equation*}
Since $w^1=\varphi_{r+t}(\eta^1)$ and $w^2=\varphi_{r-t}(\eta^2)$ by definition, applying the inverse relationship from Remark \ref{re:thetainverse} then gives us 
\begin{equation}\label{eq:trsum}
\left( \tilde \lambda_w^{\downarrow}\right) ' = (\eta^2)' + (\eta^1)'.
\end{equation}
Since the sum of partitions is taken coordinate-wise on parts, then in fact $ (\eta^2)' + (\eta^1)' = (\eta^1, \eta^2)'$. 
 Transposing \eqref{eq:trsum} thus yields
\[
 \tilde \lambda_w^{\downarrow}= (\eta^1, \eta^2) = \nu \oplus d,
\]
as required. In particular, given Grassmann permutations $u, v \in S_n^r$, a permutation $w \in S_n$ with two descents in positions $r\pm t$, and a palindromic degree $\t$ so that $\tilde \t = \D(w)-\D(u)-\D(v)$, the correspondence \eqref{eq:LJFlcomb} provides an inverse to $\LJFl^{-1}$ from Theorem \ref{thm:lstodarephrase}.
\end{proof}

To illustrate Theorem \ref{thm:LJFlcomb}, we reconsider the affine Littlewood-Richardson coefficient $C_{\lambda, \mu}^{\nu\oplus d,(k)}$ resulting from the application of $\LJGr$ in Example \ref{ex:LJGrfav}.  

\begin{example}\label{ex:LJFlcombfav}
Recall from Example \ref{ex:LJGrfav} that $n=5$ and $r=2$ with $\lambda = (2,2,1), \mu = (1,1,0), \nu = (2,0,0) \subset R_2$ and $d=1$.  
Adding one $n$-rim hook to $\nu$ with its head in column $r=2$, and overlaying the rectangle $R_2$, we see that $(\nu \oplus 1)^{\vee_2} = (1,0,0)$, and so $t = \diag_0(1,0,0)= 1$. Splitting the partition $\nu \oplus d = (\eta^1, \eta^2)$ such that $\eta^1$ has $3-1=2$ parts, we obtain
$$\nu\oplus 1=
{\tiny\tableau[scY]{\times|\times| \times|\times ,\times|\tf ,\tf}}
\implies
(\eta^1,\eta^2) = \left(
\,
{\tiny \tableau[scY]{
\times,\times|
\tf,\tf|
}}
\;,\;
 {\tiny\tableau[scY]{
\times|\times|
\times|
}}
\,
\right).$$
With $\eta^1 = (2,2)$ and $\eta^2 = (1,1,1)$, viewing $\eta^1 \subset R_{2+1} = R_3$ and applying Remark \ref{re:thetainverse} with $j=3$, we have $\varphi_3(\eta^1) = [12534] \in S^3_5$.  Similarly, viewing $\eta^2 \subset R_1$ and applying Remark \ref{re:thetainverse}, we have $\varphi_1(\eta^2)=[21345] \in S^1_5$.  Taking the product, we obtain $\varphi_1(\eta^2)\varphi_3(\eta^1) = [21534].$

Having already computed that $\varphi_2(\mu) = [25134]$ in Example \ref{ex:thetainverse}, in like manner $\varphi_2(\lambda) = [13245]$ via Remark \ref{re:thetainverse}.   Applying Theorem \ref{thm:LJFlcomb}, where $\t = (0^{2-1}, 1, 0^{3-1}) = (0,1,0,0)$, we then have the following equality
\begin{equation*}
C_{\,\tableau[pcY]{|,|,}\, , \, \tableau[pcY]{||}}^{\tableau[pcY]{\fl| \fl| \fl|\fl,\fl|,} \, , \, (4)} \xlongequal{\LJFl} c_{[13245]\, ,\, [25134]}^{[21534]\, ,\, (0,1,0,0)} .
\end{equation*}
Note that this equality is identical to \eqref{eq:LJFlbackward} from Example \ref{ex:LJFlfav}, illustrating that Theorem \ref{thm:LJFlcomb} provides an inverse to the correspondence $\LJFl^{-1}$ on the image of $\LJGr$.
\end{example}


\section{Proof of the Main Theorem}\label{sec:proof}

This goal of this section is to prove the Main Theorem, by unifying the combinatorics indexing the third permutations resulting from the two different compositions $\LJFl \circ \LJGr$ and $\PC \circ \SD$.  This third indexing permutation originates from $\LJGr$ via addition of rim hooks, and so Section \ref{sec:rimhooks} provides explicit formulas for this partition $\nu \oplus d$.  The correspondence $\LJFl$ then involves a separation of $\nu \oplus d= (\eta_t^1, \eta_t^2)$ into a pair of partitions, which we recognize in Section \ref{sec:parts} as a pair of partitions naturally obtained from the cycled image $\cyc^r(\nu)$ which occurs in $\SD$. In Section  \ref{sec:perms},  we connect the product of permutations   appearing in $\LJFl$  to the element $w_0^{P'_t}$ from $\PC$.  The proof then immediately follows in Section \ref{sec:mainproof}.

\subsection{Rim hooks and pairs of partitions}\label{sec:rimhooks}

We begin by giving an explicit description of the shape $\nu \oplus d$, as well as the 
pair of partitions $\nu\oplus d=(\eta_t^1,\eta_t^2)$.
Recall that the transpose of partition $\nu=(\nu_1,\ldots,\nu_m)\subseteq R_r$ is the partition
$\nu'=(\nu_1',\ldots,\nu_r')$ where $\nu_j'=\#\{ i\in[m] \mid \nu_i\geq j\}$.

\begin{lemma}  \label{lem:nuoplusd}   
For any $\nu = (\nu_1, \dots, \nu_m) \subseteq R_r$ and $d \in \ZZ_{\geq 0}$
with $a_d:=\nu'_{r-d}$, we have
\begin{equation}
\label{eq:nuoplusdparts}
\nu\oplus d = (r^{d+a_d},\nu_{a_d+1}+d,\dots,\nu_m+d,\nu_1-r+d,\dots,\nu_{a_d}-r+d)\,.
\end{equation}
Moreover, separating $\nu\oplus d=(\eta_t^1,\eta_t^2)$  as in Definition \ref{def:etas} with 
$t=\diag_0((\nu\oplus d)^{\vee_r})$, 
  \begin{align*}
\eta_t^1 &= (r^{d+a_d},\nu_{a_d+1}+d,\dots,\nu_{m-\delta}+d), \\
\eta_t^2 &= (\nu_{m-\delta+1}+d,\dots,\nu_m+d,\nu_1+d-r,\dots,\nu_{a_d}+d-r),
\end{align*}
where $\delta:=\diag_0(\nu^\vee)$.
\end{lemma}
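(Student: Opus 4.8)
The plan is to establish \eqref{eq:nuoplusdparts} by induction on $d$, and then to obtain the subdivision $(\eta^1_t,\eta^2_t)$ by cutting the resulting list of parts at the right place using Remark~\ref{re:diagdist}. For the base case $d=0$ note that $a_0=\nu'_r$ counts the rows of $\nu$ of length exactly $r$, so the last $a_0$ entries $\nu_i-r$ on the right-hand side are zero and the formula reduces to $\nu$ itself.

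For the inductive step I would set $\mu:=\nu\oplus(d-1)$ and first record the three facts needed about $\mu$, all read off from the inductive formula: all parts of $\mu$ are at most $r$; exactly $(d-1)+a_{d-1}$ rows of $\mu$ have length $r$; and $\mu$ has at most $(d-1)+m+a_{d-1}$ nonzero rows. The key step is to identify the unique $n$-rim hook $R$ with head in column $r$ adjoined to $\mu$. Because $R$ has width at most $r$ and its southeasternmost cell lies in column $r$, the requirement that $\mu\cup R$ be a partition forces the head to be the cell $(\mu'_r+1,\,r)$; moving up row by row from there, the partition condition together with the no-$2\times2$ condition forces the cells of $R$ in each successive row, so $R$ is completely determined, occupying $r-\mu_{\mu'_r+1}$ cells in its head row and $\mu_{\mu'_r+s}-\mu_{\mu'_r+1+s}+1$ cells in row $\mu'_r+1+s$ for $s\ge1$. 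A short count of these cells shows $R$ spans rows $\mu'_r+1$ through $\mu'_r+1+L$ with $L-\mu_{\mu'_r+1+L}=m$; the row bound on $\mu$ forces $\mu_{\mu'_r+1+m}=0$, hence $L=m$, so $R$ lies in exactly $m+1$ rows.

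Adjoining $R$ to $\mu$, the first $\mu'_r+1$ rows of $\nu\oplus d$ equal $r$ and its $(\mu'_r+1+s)$-th row equals $\mu_{\mu'_r+s}+1$ for $1\le s\le m$. Substituting the inductive formula for $\mu$ and rewriting by means of the identity $a_d=a_{d-1}+\#\{i:\nu_i=r-d\}$ — whereby each of those $\#\{i:\nu_i=r-d\}$ rows of $\nu$ contributes one additional leading entry equal to $r$ and one additional trailing zero — collapses the answer to exactly the right-hand side of \eqref{eq:nuoplusdparts}, and simultaneously re-verifies the three bookkeeping facts one level up, closing the induction. For the subdivision, Remark~\ref{re:diagdist} gives $\delta=d+t$ with $0\le t\le\delta$, so $m-t=m+d-\delta$; cutting the list in \eqref{eq:nuoplusdparts} after its first $m-t$ entries, the leading block $(r^{d+a_d},\nu_{a_d+1}+d,\dots,\nu_{m-\delta}+d)$ has $(d+a_d)+(m-\delta-a_d)=m-t$ parts, so it is exactly $\eta^1_t$, and the remaining entries form the displayed $\eta^2_t$.

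The main obstacle is pinning down the rim hook $R$ precisely — showing that its head position is forced and that the rest of $R$ is then uniquely determined by the partition and no-$2\times2$ conditions — and then carrying out the bookkeeping that matches the row-by-row shape of $\mu\cup R$ against the already-sorted list in \eqref{eq:nuoplusdparts}, the delicate point being the migration of the length-$(r-d)$ rows of $\nu$ into the leading run of $r$'s as $d$ grows. Throughout one stays in the range $0\le d\le\diag_0(\nu^{\vee})$ supplied by Remark~\ref{re:diagdist}, which is what keeps $\nu\oplus d$ of width $r$ and all the formulas consistent.
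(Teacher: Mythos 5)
Your proposal is correct and follows essentially the same route as the paper's proof: induction on $d$, with the inductive step adding a single $n$-rim hook with head in column $r$ to $\nu\oplus(d-1)$, the same bookkeeping identity $a_d=a_{d-1}+\#\{i:\nu_i=r-d\}$ absorbing the new full rows and trailing zeros into \eqref{eq:nuoplusdparts}, and the same immediate cut after the first $m-t$ parts via $\delta=d+t$ from Remark~\ref{re:diagdist}. The only difference is that you spell out why the rim hook's head and row-by-row cells are forced and why it spans exactly $m+1$ rows, details the paper's proof asserts implicitly.
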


\begin{proof}
In order to prove  \eqref{eq:nuoplusdparts}, we proceed by  induction on $d$. For $d=0$, the equality holds since $\nu_i=r$ for $i\in[a_0] = [\nu_r']$.
Assuming \eqref{eq:nuoplusdparts} for $\nu\oplus d$,  adding a rim hook to $\nu\oplus d$ gives
$$
\nu\oplus (d+1) = (r^{d+a_d+1},\nu_{a_d+1}+d+1,\dots,\nu_m+d+1,\nu_1-r+d+1,\dots,\nu_{a_d}-r+d+1).
$$
Let $b=a_{d+1}-a_d=\nu_{r-d-1}'-\nu_{r-d}'$ and observe our claim holds trivially at $b=0$. 
When $b>0$, then $\nu_{r-d-1}'>\nu_{r-d}'$ which implies that 
\begin{equation}\label{eq:rd1values}
\nu_{a_{d+1}}=\nu_{\nu_{r-d-1}'}=r-d-1
=\nu_{a_d+1}=\nu_{a_d+2}=\cdots =\nu_{a_d+b}.
\end{equation}
Therefore, for all $i \in [b]$, we have $\nu_{a_d+i}+d+1 = r$ and so
$$
\nu\oplus (d+1) = (r^{d+a_d+1},r^b, \nu_{a_{d+1}+1}+d+1,\dots, 
\nu_m+d+1,\nu_1-r+d+1,\dots,\nu_{a_{d}}-r+d+1)\,.
$$
We can transform this to precisely the form of identity~\eqref{eq:nuoplusdparts} at $d+1$ using $a_{d+1}=a_d+b$, and by
noting via \eqref{eq:rd1values} that $\nu_{a_d+1}-r+d+1=\cdots=\nu_{a_{d+1}}-r+d+1=0$.

The explicit form~\eqref{eq:nuoplusdparts} for $\nu\oplus d$ immediately
implies expressions for $(\eta^1_t,\eta^2_t)$ since
$\eta^1_t$ consists of the first $m-t$ rows, and $\delta:=\diag_0(\nu^\vee)=d+t$ by Remark~\ref{re:diagdist}. 
\end{proof}

We now illustrate the formulas for $\nu \oplus d=(\eta_t^1, \eta_t^2)$ from Lemma \ref{lem:nuoplusd} for several values of $d$.

\begin{example}\label{ex:nuoplusd}
Let $n=15$ and $\nu=(8,7,5,2,1)\subset R_{10}$, in which case $m=5$.  Here $\delta=\diag_0(\nu^\vee)=3$, and so $m-\delta = 2$.  

For $d=0$, we have $t=\diag_0(\nu^\vee)= 3$ and $a_d=0$.  Equation \eqref{eq:nuoplusdparts} in the case $d+a_d=0$ simply returns $\nu \oplus 0 = (\nu_1, \dots, \nu_m)$, which we separate as
$$
\nu \oplus 0= {\huge\tableau[pcY]{
\tf,,,,,,,,,|
\tf,\tf,,,,,,,,|
\tf ,\tf,\tf,\tf,\tf,,,,,|\tf  ,\tf  ,\tf  ,\tf  ,\tf  ,\tf  ,\tf  ,,,|
 \tf  ,\tf  ,\tf  ,\tf  ,\tf  ,\tf  ,\tf  ,\tf,,|}}
 \quad\mapsto\quad
 (\eta_3^1,\eta_3^2 )=\left(
\,
{\huge\tableau[pcY]{
\tf ,\tf  ,\tf  ,\tf  ,\tf  ,\tf  ,\tf  |
 \tf  ,\tf  ,\tf  ,\tf  ,\tf  ,\tf  ,\tf  ,\tf|}}
\,\;,\,\;
{\huge \tableau[pcY]{ 
\tf|\tf,\tf|\tf,\tf,\tf,\tf,\tf}}
\,
\right).
$$

 For  $d=2$, we have $t=\diag_0((\nu\oplus 2)^\vee)= 1$ and $a_d=1$.  Equation \eqref{eq:nuoplusdparts} then says that $\nu\oplus 2 =(10^{2+1},7+2,5+2,2+2,1+2,8-10+2)= (10,10,10,9,7,4,3,0)$, which is then separated as

$$
\nu \oplus 2 =
{\Tiny\tableau[scY]{
2,2,2|
1,1,2,2|
\tf,1,1,2,2,2,2|
\tf,\tf,1,1,1,1,2,2,2|
\tf,\tf,\tf,\tf,\tf,1,1,1,2,2|
\tf ,\tf,\tf,\tf,\tf,\tf,\tf,1,1,2|
\tf,\tf,\tf,\tf,\tf,\tf,\tf,\tf,1,1|}}
\, 
 \quad \mapsto\quad
(\eta_1^1,\eta_1^2) = \left(
\,
{\Tiny \tableau[scY]{
\tf,\tf,1,1,1,1,2,2,2|
\tf,\tf,\tf,\tf,\tf,1,1,1,2,2|
\tf ,\tf,\tf,\tf,\tf,\tf,\tf,1,1,2|
\tf,\tf,\tf,\tf,\tf,\tf,\tf,\tf,1,1|}}
\;,\;
 {\Tiny\tableau[scY]{
2,2,2|
1,1,2,2|
\tf,1,1,2,2,2,2|
}}
\, 
\right).$$
In the figure above, the boxes containing 1's denote the cells of the first rim hook added, and the second rim hook is denoted by those cells containing 2's.
\end{example}

\subsection{Cycling and pairs of partitions}\label{sec:parts}

In order to prove the Main Theorem, we relate $\nu\oplus d=(\eta_t^1,\eta_t^2)$ to pairs of partitions constructed after applying the cycling map to $\nu$.  Throughout the rest of the paper, we thus denote by $\rho:=\cyc^r(\nu)$ the partition which results from  cycling the first $r$ bits of $b_\nu$ to the end of the string. It is helpful to start by formulating $\rho$ in terms of $\nu$.

\begin{lemma}\label{le:nicerho}
Let $\nu = (\nu_1, \dots, \nu_m) \subseteq R_r$, and denote by  $\rho=\cyc^r(\nu)$. For
$\delta:=\diag_0(\nu^\vee)$, we have
\[
 \rho=  (\nu_{m-\delta+1}+\delta,\dots,\nu_m+\delta,\nu_1-r+\delta,\dots,\nu_{m-\delta}-r+\delta)\,.
\]
Furthermore, $\delta = \diag_0(\rho)$. 
\end{lemma}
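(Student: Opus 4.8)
The strategy is to track the bit string $b_\nu$ under cycling, since the cycling map $\cyc^r$ is defined combinatorially on bit strings. First I would recall that for $\nu=(\nu_1,\dots,\nu_m)\subseteq R_r$, the bit string $b_\nu$ has length $n=m+r$, with $m$ zeros (recording vertical steps) and $r$ ones (recording horizontal steps), read by tracing the boundary of $\nu$ from the upper-left corner of $R_r$. The key observation is that the first $r$ bits of $b_\nu$ are precisely the bits encountered along the top edge and first descent of $R_r$: more concretely, the number of $1$'s among the first $r$ bits equals $r-\nu_m$ (since the top row of $R_r$ has $r-\nu_m$ cells not in $\nu$ before the first vertical step down past row $m$), and in fact the quantity $\delta=\diag_0(\nu^\vee)$ admits the interpretation given in Remark \ref{re:diagdist} as the diagonal distance from the upper-right corner of $R_r$ to the boundary of $\nu$. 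I would make precise the claim that $\delta$ counts exactly how many zeros appear among the first $r$ bits of $b_\nu$ — equivalently, the first $r$ bits of $b_\nu$ consist of $\delta$ zeros and $r-\delta$ ones, interleaved according to the boundary of the top-right $\delta\times\delta$ staircase region of $\nu$ inside $R_r$.

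Next I would compute $\rho=\cyc^r(\nu)$ by moving those first $r$ bits to the end. The resulting string $b_\rho$ then has its first $m$ bits equal to bits $r+1,\dots,n$ of $b_\nu$ (encoding the lower-left portion of the boundary of $\nu$), followed by the relocated first $r$ bits of $b_\nu$. Reading off the partition $\rho$ from $b_\rho$ by the standard bit-string-to-shape correspondence, each part of $\rho$ is determined by counting the number of $1$'s preceding each $0$. The relocated block contributes $r-\delta$ ones at the end, which shifts the column counts; meanwhile the $\delta$ zeros in the relocated block become the bottom $\delta$ rows of $\rho$. Carefully bookkeeping these counts against the definition $\rho_j = \#\{1\text{'s before the }j\text{-th }0\text{ in }b_\rho\}$, one recovers exactly
\[
\rho = (\nu_{m-\delta+1}+\delta,\dots,\nu_m+\delta,\ \nu_1-r+\delta,\dots,\nu_{m-\delta}-r+\delta),
\]
where the top $m-\delta$ rows $\nu_1,\dots,\nu_{m-\delta}$ reappear shifted by $\delta-r$ (these are the long rows, whose horizontal steps all lie within the first $r$ bits that got moved), and the bottom $\delta$ rows come from $\nu_{m-\delta+1},\dots,\nu_m$ shifted up by $\delta$ (their horizontal steps lie beyond the first $r$ bits). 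An alternative, perhaps cleaner route is to compare with Lemma \ref{lem:nuoplusd}: observe that the formula for $\rho$ above is literally $\eta^2_t$ from that lemma in the degree-$d$ case with $d+t=\delta$, or more directly, note that $\nu\oplus 0 = \nu$ and that the displayed formula for $\rho$ matches $(\eta^1,\eta^2)$-type bookkeeping with all parts shifted; I would cross-check consistency using the running Example \ref{ex:nuoplusd}.

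For the final assertion $\delta=\diag_0(\rho)$: since cycling is a bijection on bit strings, $\rho\subseteq R_r$ again (it has the same $m$ zeros and $r$ ones), and $\diag_0(\rho^\vee)$ equals the number of zeros among the first $r$ bits of $b_\rho$ reversed... rather, more directly, I would argue that $\diag_0(\rho)$ equals the number of $0$'s appearing before the diagonal crossing in $b_\rho$; using the explicit form of $\rho$, the main diagonal of $\rho$ has length equal to the largest $i$ with $\rho_i\geq i$. From the formula, $\rho_1=\nu_{m-\delta+1}+\delta$ and the first $\delta$ parts are each at least... I would verify that $\rho_\delta\geq\delta$ while $\rho_{\delta+1}<\delta+1$, using $\nu_{m-\delta+1}\geq 0$ for the lower bound and the fact that $\nu_1-r+\delta<\delta+1$, i.e. $\nu_1\leq r$, for the upper bound (with the boundary case handled by the definition of $\delta$ as the exact diagonal distance).

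\textbf{Main obstacle.} The delicate part will be the bit-string bookkeeping in the second paragraph: correctly identifying which rows of $\nu$ are ``long'' (horizontal steps inside the first $r$ bits) versus ``short,'' and getting the shift amounts ($+\delta$ versus $+\delta-r$) and the reindexing of rows exactly right, including all the boundary/edge cases where $\delta=0$, $\delta=m$, or where some $\nu_i$ equals $0$ or $r$. I expect the cleanest rigorous path is to avoid ad hoc boundary-tracing and instead either (a) express everything in terms of the complement $\nu^\vee$ and use that $b_{\nu^\vee}$ is the reversal of $b_\nu$ (as used in the proof of Theorem \ref{thm:SDPost}), reducing the cycling computation to a reversal-plus-cycle identity, or (b) derive the formula for $\rho$ as the $d=0$ specialization of the $\eta^2_t$ computation already carried out inductively in Lemma \ref{lem:nuoplusd}, thereby reusing that induction rather than repeating it.
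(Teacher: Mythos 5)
Your plan is correct and follows essentially the same route as the paper's proof: there too one splits $b_\nu=b_\nu^r\,b_\nu^m$, observes that the first $r$ bits trace exactly the top $\delta$ rows $\eta^2_\delta=(\nu_{m-\delta+1},\dots,\nu_m)$ of $\nu$ (equivalently contain exactly $\delta$ zeros, because the boundary point reached after $r$ steps is the point at diagonal distance $\delta$ from the upper-right corner of $R_r$), and then reads $\rho$ off from $b_\rho=b_\nu^m\,b_\nu^r$ as $\bigl(\eta^2_\delta+(\delta^\delta),\ \eta^1_\delta-((r-\delta)^{m-\delta})\bigr)$, with $\diag_0(\rho)=\delta$ immediate since the boundary of $\rho$ passes through $(\delta,\delta)$. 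Two side-assertions in your write-up are wrong and should be corrected when you carry out the bookkeeping, though neither derails the plan: the number of $1$'s among the first $r$ bits is $r-\delta$, not $r-\nu_m$ (indeed the number of $1$'s before the \emph{first} $0$ is $\nu_m$); and your parenthetical attributions are swapped --- it is the short rows $\nu_{m-\delta+1},\dots,\nu_m$ whose zeros and preceding ones lie within the first $r$ bits (so after cycling they gain the $\delta$ ones of $b_\nu^m$ and become $\rho_1,\dots,\rho_\delta$), while the long rows $\nu_1,\dots,\nu_{m-\delta}$ have their zeros in the last $m$ bits and lose the $r-\delta$ ones of the moved block (hence the shift by $\delta-r$). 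The displayed formula, the shift amounts, and your verification that $\rho_\delta\geq\delta$ while $\rho_{\delta+1}\leq\delta$ are all correct as stated, so fixing those two attributions yields essentially the argument given in the paper.
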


\begin{proof}

Recall from Definition \ref{def:etas} that  $\eta_\delta^1$ and $\eta_\delta^2$ are defined as the bottom $m-\delta$ and top $\delta$ rows of $\nu$, respectively, so that
\begin{align}
\eta_\delta^1 &=  (\nu_1, \dots, \nu_{m-\delta}) \label{eq:etadelta} \\
\eta_\delta^2 &=(\nu_{m-\delta+1}, \dots, \nu_m), \nonumber
\end{align}
Write the bit string $b_\nu=b^{r}_\nu\, b^m_\nu$
as the concatenation of the substring $b^r_\nu$ containing the first $r$ bits of $b_\nu$, and the substring $b^m_\nu$
containing the last $m$ bits of $b_\nu$. Defining the southwest corner of $R_r$ as the origin, 
the substring $b_\nu^r$ traces the boundary of $\nu$ from the northwest corner of $R_r$ to position $(r-\delta,m-\delta)$,
since $\delta=\diag_0(\nu^{\vee})$; i.e.~$b_\nu^r$ traces the shape $\eta_\delta^2$.  The substring $b_\nu^m$ then traces the remainder of the boundary of $\nu$ from position $(r-\delta,m-\delta)$ to the southeast corner of $R_r$; i.e.~$b_\nu^m$ traces the shape  $\eta_\delta^1-(r-\delta)^{m-\delta}$.

\begin{figure}[!ht]
\centering
\begin{tikzpicture}[x=.9pt,y=.9pt,yscale=-1,xscale=1]
\draw [ dotted ](182.5,209)-- (100,209)-- (100,133) -- (227.5,133) -- (227.5,209);
\fill [gray!10] (182.5,209) rectangle  (227.5,260);
\fill [gray!10] (100,209) rectangle  (183,183);
\draw[<->] (101, 212) -- node[pos=.5,below] {\Tiny{$r-\delta$}}  (182,212) ;
\draw[<->] (95, 209)  -- node[pos=.5,above,rotate=90]   {\Tiny{$m-\delta$}} (95,183);
\draw[<->,densely dashed ](185, 182)  -- node[pos=.5,below,rotate=90]   {\Tiny{$\delta$}} (185,136);
\draw [<->,densely dashed] (185,135)  -- node [pos=.5, below ] {\Tiny{$\delta$}} (226.5,135); 
\draw [densely dotted]  (182.5,183) -- (182.5,133) ;
\draw  [densely dotted]  (182.5,183) --  (227.5,183) ;
\path [draw, fill=gray!40] (100,183)--  (100,147) -- (118.5,147) -- (118.5,163)  -- (153.5,163)  -- (153.5,173) -- (171.5,173)  -- (171.5,183)  ;
\draw  (171.5,183) -- (183,183);
\node at (112,173) {\Tiny{$\eta_\delta^2$}};
\draw [dotted]    (100,183) -- (182.5,183) ;
\fill [gray!40] (227.5,223) -- (245.5,223) -- (245.5,239)   -- (280.5,239) --  (280.5,249)
 -- (300,249) -- (300,260) -- (227.5,260)-- cycle;
\path[draw]   (227.5,223) -- (245.5,223) -- (245.5,239)   -- (280.5,239) --  (280.5,249)
 -- (300,249) -- (300,260) -- (227.5,260);
 \node at (255,248) {\Tiny{$\rho^{R^\delta}=\eta_\delta^2$}};
\path[draw]   (182.5,183) -- (182.5,191) -- (213.5,191) -- (213.5,202)  -- (227.5,202) 
--(227.5,209) ;
\fill[pattern=north east lines]   (182.5,183) -- (182.5,191) -- (213.5,191) -- (213.5,202)  -- (227.5,202) 
--(227.5,209) -- (182.5,209);
\draw (182.5,183) -- (182.5,191) -- (213.5,191) -- (213.5,202)  -- (227.5,202) 
--(227.5,209);
\draw [dotted] (227.5,209) -- (182.5,209);
\draw  [dash pattern={on 0.84pt off 2.51pt}]  (182.5,183) -- (182.5,209) ;
\draw  [dash pattern={on 0.84pt off 2.51pt}]  (227.5,183) -- (300,183) ;
\draw  [dash pattern={on 0.84pt off 2.51pt}]  (300,183) -- (300,260) ;
\draw    (227.5,209) -- (227.5,223) ;
\draw    [dotted](227.5,223) -- (227.5,260) ;
\draw[<->,densely dashed ](185, 259)  -- node[pos=.5,below,rotate=90]   {\Tiny{$\delta$}} (185,212);
\draw [<->,densely dashed] (186,212)  -- node [pos=.5, below ] {\Tiny{$\delta$}} (226.5,212); 
\draw [densely dotted]  (182.5,260) -- (182.5,209) ;
\draw  [densely dotted]  (182.5,260) --  (227.5,260) ;
\node at (270,170) {\Tiny{$\eta_\delta^1-(r-\delta)^{m-\delta}=\rho^{L^\delta}$}};
\draw[->] (260,180) to [out=150,in=-10, looseness=1]  (200,200);
\node at (126,198) {\tiny{$\nu$}};
\node at (219,250) {\tiny{$\rho$}};
\end{tikzpicture}
\caption{The partition $\nu \subseteq R_r$ and its cycled image $\rho = \cyc^r(\nu)$.}
\label{fig:cyclerho}
\end{figure}

Now cycling to obtain $\rho=\cyc^r(\nu)$, we have $b_\rho= b^m_\nu \,b^r_\nu$.  By our previous observations, $\rho$ can  be constructed by beginning at the northwest corner of $R_r$ and tracing  the partition $\eta_\delta^1-(r-\delta)^{m-\delta}$ to position $(\delta,\delta)$, and then tracing the partition $\eta_\delta^2$ from there to the southeastern corner of $R_r$; see  Figure \ref{fig:cyclerho}.
That is,
\begin{equation}\label{eq:rhoeta}
\rho = \left( \eta_\delta^2+\delta^\delta, \eta_\delta^1-(r-\delta)^{m-\delta}\right).
\end{equation}
From this description,  we observe that $\diag_0(\rho)=\delta$ as claimed.  Finally, comparing \eqref{eq:rhoeta} to \eqref{eq:etadelta}, we obtain the result.
\end{proof}

Given a partition containing a $t\times t$ rectangle, we now define an associated pair of partitions.

\begin{definition}\label{def:rho} 
Given  $\rho \subseteq R_r$ and $t \in \ZZ_{\geq 0}$ such that $(t^t)\subseteq \rho$, let $a:=\rho_t'$
and define a pair of partitions $(\rho^{L^t}, \rho^{R^t})$ by
\begin{enumerate}
\item $\rho^{L^t}= (t^{a-t},\rho_{a+1}, \dots, \rho_{m})$ is the upper lefthand portion of $\rho$ lying above the rectangle $(t^t) \subseteq \rho$, or equivalently, $\left(\rho^{L^t}\right)' =(\rho'_1-t,\dots,\rho'_t-t)$, and
\item $\rho^{R^t} = (\rho_1-t, \dots, \rho_{a}-t)$ is the righthand 
portion of $\rho$ from columns
$t+1,\ldots,r$, or equivalently, $\left(\rho^{R^t}\right)' =(\rho'_{t+1},\dots,\rho'_r)$.
\end{enumerate}
\end{definition}

 We continue by illustrating the construction of $(\rho^{L^t}, \rho^{R^t})$ for several different values of $t$.

\begin{example}\label{ex:rho}
From Example \ref{ex:nuoplusd} with $n=15$ and $\nu=(8,7,5,2,1)\subset R_{10}$,
consider the partition $\rho = \phi^{10}(\nu)$ by first converting $\nu$ to its bit string $b_\nu$. Cycling the first 10 bits (shown below in blue) of $b_\nu$ to the end of the string, we have  
\begin{align*}
 \nu = (8,7,5,2,1) &\  \longleftrightarrow \ b_\nu  = \blue{1010111011} 01011, \\
\rho = (8,5,4,1,0) & \ \longleftrightarrow \ b_\rho =01011\blue{1010111011}.
\end{align*}

Recall from Example \ref{ex:nuoplusd} that when $d=2$, we have $t=\diag_0((\nu\oplus 2)^\vee)= 1$.
Applying Definition \ref{def:rho} with $t=1$, we have $a =\rho_1' = 4$, so that $\rho^{L^1}= (1^{4-1},\rho_{4+1}) = (1,1,1,0)$ and $\rho^{R^1} = (\rho_1-1, \dots, \rho_{4}-1)=(7,4,3,0)$.  We visualize this pair of partitions lying above and to the right of the embedded $1 \times 1$ rectangle as follows:

$$
\rho={\huge
\tableau[pcY]{,,,,,,,,,| \tf ,, ,,,,,,,| \tf ,\tf,\tf,\tf,,,,,,|
\tf ,\tf ,\tf ,\tf,\tf,,,,,| \fl ,\tf ,\tf ,\tf,\tf,\tf,\tf,\tf,,| }}
\qquad 
 \mapsto
\qquad
 (\rho^{L^1}, \rho^{R^1} )=
\left(
\,
{\huge \tableau[pcY]{\tf|\tf |\tf }}
\,\;,\,\;
{\huge \tableau[pcY]{ 
\tf, \tf,\tf|
\tf,\tf,\tf,\tf|
\tf,\tf,\tf,\tf,\tf,\tf,\tf}}
\,
\right)
\, .
$$

Recall from Example \ref{ex:nuoplusd} that when $d=0$, we have $t=\diag_0((\nu\oplus 0)^\vee)= 3$.
Applying Definition \ref{def:rho} with $t=3$, we have $a = \rho_3' = 3$ , and so $a-t=0$.  Therefore, $\rho^{L^3}= (\emptyset, \rho_{3+1},\rho_{5}) = (1,0)$ and $\rho^{R^3} = (\rho_1-3, \dots, \rho_{3}-3)=(5,2,1)$, which we visualize as follows:

$$
\rho=
{\huge
\tableau[pcY]{,,,,,,,,,| \tf ,, ,,,,,,,| \fl ,\fl ,\fl ,\tf,,,,,,|
\fl ,\fl ,\fl ,\tf,\tf,,,,,| \fl ,\fl ,\fl ,\tf,\tf,\tf,\tf,\tf,,| }}
\qquad 
\mapsto 
\qquad
 (\rho^{L^3}, \rho^{R^3} )=
\left(
\,
{\huge \tableau[pcY]{\tf}}
\,\;,\,\;
{\huge \tableau[pcY]{ 
\tf|\tf,\tf|\tf,\tf,\tf,\tf,\tf}}
\,
\right)
\, .
$$
\end{example}

We pause to make one key observation from Lemma~\ref{le:nicerho}.  Given any $\nu\subseteq R_r$ with cycled partition $\rho:=\cyc^r(\nu)$, for any $d \in \ZZ_{\geq 0}$, let
 $t:=\diag_0((\nu\oplus d)^{\vee})$. By  Remark~\ref{re:diagdist},  
we have $0 \leq t\leq \delta = \diag_0(\nu^\vee)$.  Since $\delta = \diag_0(\rho)$ by Lemma \ref{le:nicerho}, for any such $t$ we also have $(t^t) \subseteq \rho$. Thus 
 the pair of partitions $(\rho^{L^t}, \rho^{R^t})$ is defined for $\rho$ with this value of $t$, and is related explicitly to the earlier pair of partitions $(\eta^1_t,\eta^2_t)$ obtained from adding rim hooks as follows.

\begin{prop}\label{prop:rho-nu}
Let $\nu \subseteq R_r$ and $\rho=\cyc^r(\nu)$. Let $d \in \ZZ_{\geq 0}$ and $t=\diag_0((\nu \oplus d)^{\vee_r})$. 
Then,
\begin{equation}\label{eq:etaRL}
(\eta_t^1, \eta_t^2 ) = (\rho^{L^t}+(r-t)^{m-t}, \rho^{R^t}),
\end{equation}
where the partition $\nu \oplus d = (\eta^1_t,\eta^2_t)$ is separated as in Definition \ref{def:etas}.
 \end{prop}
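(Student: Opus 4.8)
The plan is to compare the explicit formulas for both sides of \eqref{eq:etaRL}, which are already available from the two structural lemmas proved just above. On one side, Lemma~\ref{lem:nuoplusd} gives the exact row-by-row description of $\eta_t^1$ and $\eta_t^2$ in terms of the parts of $\nu$, the integer $a_d = \nu'_{r-d}$, and $\delta = \diag_0(\nu^\vee)$; recall $\delta = d+t$ by Remark~\ref{re:diagdist}. On the other side, Lemma~\ref{le:nicerho} gives an explicit description of $\rho = \cyc^r(\nu)$ in terms of the parts of $\nu$ and the single parameter $\delta$, and records that $\diag_0(\rho)=\delta$, which in particular guarantees $(t^t)\subseteq\rho$ for $0\le t\le\delta$ so that the pair $(\rho^{L^t},\rho^{R^t})$ of Definition~\ref{def:rho} is well-defined. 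So the proof is essentially a matter of unwinding both descriptions and checking that the parts match.

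First I would compute $\rho^{R^t}$. From Lemma~\ref{le:nicerho}, $\rho = (\nu_{m-\delta+1}+\delta,\dots,\nu_m+\delta,\nu_1-r+\delta,\dots,\nu_{m-\delta}-r+\delta)$. The value $a := \rho'_t$ counts the rows of $\rho$ of length $\ge t$; I would verify that $a = d + a_d$, i.e.\ that exactly the first $\delta$ rows of $\rho$ (coming from $\eta_\delta^2 + \delta^\delta$, which all have length $\ge\delta\ge t$) together with the first $a_d - t$ of the remaining rows have length $\ge t$ — here one uses that $\nu_i = r$ for $i\le a_0 = \nu'_r$ and more generally that $\nu_i - r + \delta \ge t$ exactly when $\nu_i \ge r - d$, i.e.\ $i \le a_d$. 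Granting $a = d+a_d$, subtracting $t$ from the first $a$ parts of $\rho$ gives
\[
\rho^{R^t} = (\nu_{m-\delta+1}+\delta-t,\dots,\nu_m+\delta-t,\nu_1-r+\delta-t,\dots,\nu_{a_d}-r+\delta-t),
\]
and using $\delta - t = d$ this is exactly $(\nu_{m-\delta+1}+d,\dots,\nu_m+d,\nu_1+d-r,\dots,\nu_{a_d}+d-r) = \eta_t^2$ as given by Lemma~\ref{lem:nuoplusd}. (The length $a$ of $\rho^{R^t}$ matches the length $\delta + a_d - t = d + a_d$ of $\eta_t^2$, which is consistent since $\eta_t^2$ has $\delta$ rows of positive length plus the $a_d$ rows from the bottom of $\nu$, minus the $t$ rows absorbed — I would double-check the bookkeeping here.)

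Next I would compute $\rho^{L^t}$ using the transpose description $(\rho^{L^t})' = (\rho'_1 - t,\dots,\rho'_t - t)$, equivalently the row description $\rho^{L^t} = (t^{a-t},\rho_{a+1},\dots,\rho_m)$. With $a = d+a_d$, the rows $\rho_{a+1},\dots,\rho_m$ are $\nu_{a_d+1}-r+\delta,\dots,\nu_{m-\delta}-r+\delta$, i.e.\ $\nu_{a_d+1}-r+d+t,\dots,\nu_{m-\delta}-r+d+t$; adding back $(r-t)^{m-t}$ turns the full-length block $t^{a-t}$ into $r^{d+a_d-t}$... wait, one must be careful: $\rho^{L^t}+(r-t)^{m-t}$ adds $r-t$ to each of the first $m-t$ rows of $\rho^{L^t}$, and $\rho^{L^t}$ has $a - t = d + a_d - t$ rows equal to $t$ followed by the rows $\rho_{a+1},\dots,\rho_m$ (of which there are $m - a = m - d - a_d = m - \delta - a_d + t$... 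I would recheck this count against $m-t$). The rows equal to $t$ become $r$, giving $r^{d+a_d-t}$... but $\eta_t^1$ begins $r^{d+a_d}$, so the discrepancy of $t$ full rows must be accounted for by the $t$ rows of $\rho^{L^t}$ that lie \emph{beyond} row $m-t$ — these are the rows $\rho_{a+1},\dots$ that are still $\ge t$? No: $a-t$ rows of $\rho^{L^t}$ equal $t$. Since we add $r-t$ only to the first $m-t$ rows, and $a - t = d + a_d - t \le m - t$ (as $d + a_d \le m$), all $a-t$ full rows get boosted to $r$, and then the next $m - a$ rows $\rho_{a+1},\dots,\rho_m$ — of which the first $\delta - t = d$ happen to still equal... hmm, this needs the observation that among $\rho_{a+1},\dots$ some equal $t$ exactly. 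The cleanest route is to work entirely with transposes: show $((\eta_t^1,\eta_t^2)') = ((\rho^{L^t}+(r-t)^{m-t})', (\rho^{R^t})')$ columnwise, where the transpose of the horizontal concatenation $(\eta_t^1,\eta_t^2)$ (stacking $\eta_t^2$ on top of $\eta_t^1$) is the \emph{sum} $(\eta_t^1)' + (\eta_t^2)'$, and similarly $(\rho^{L^t}+(r-t)^{m-t})' $ and $(\rho^{R^t})'$ assemble columnwise from $\rho' = $ the transpose given by Lemma~\ref{le:nicerho}. Then \eqref{eq:etaRL} becomes the single identity $\rho' = (\rho^{L^t})' + (\rho^{R^t})'$ shifted appropriately, which is immediate from Definition~\ref{def:rho}: $(\rho^{L^t})' = (\rho'_1-t,\dots,\rho'_t-t)$ and $(\rho^{R^t})' = (\rho'_{t+1},\dots,\rho'_r)$ literally partition the columns of $\rho$. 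I would then only need the single nontrivial input $\eta_t^1{}' $ vs.\ $\rho^{L^t}{}' + (r-t)\cdot(\text{first }t\text{ entries})$, i.e.\ reconciling column lengths, which reduces to $\nu'$ versus the columns of $\rho$ — and that reconciliation is exactly the content of the bit-string picture in Figure~\ref{fig:cyclerho}.

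The main obstacle is the off-by-one index bookkeeping: correctly identifying $a = \rho'_t$ with $d + a_d$, tracking which rows of $\rho^{L^t}$ fall within the first $m-t$ (so as to receive the $(r-t)^{m-t}$ shift) and which do not, and making sure the lengths $l(\eta_t^1) = m-t$, $l(\eta_t^2) = \delta + a_d = d + a_d + t$... match on both sides. I expect the argument goes through most smoothly by transposing everything and invoking the bit-string decomposition $b_\nu = b_\nu^r\, b_\nu^m$ already established in the proof of Lemma~\ref{le:nicerho}: cycling permutes these two blocks, and the claimed identity \eqref{eq:etaRL} is then the assertion that the two block-decompositions — one coming from adding $d$ rim hooks and splitting at the diagonal of $(\nu\oplus d)^\vee$, the other from cycling and splitting at the embedded $(t^t)$ — describe the same two boundary arcs. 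Once phrased that way, it is essentially a diagram chase, so I would keep the writeup short: state the transpose reformulation, invoke Lemmas~\ref{lem:nuoplusd} and~\ref{le:nicerho} for the explicit parts, verify $a = d + a_d$ (the one genuine computation), and conclude.
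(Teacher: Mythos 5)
Your overall strategy is the same as the paper's: compare the explicit row formulas of Lemma~\ref{lem:nuoplusd} with the explicit description of $\rho=\cyc^r(\nu)$ from Lemma~\ref{le:nicerho}, the one genuine computation being the identification of $a=\rho'_t$ in terms of $\nu$. The gap is that this key identity is stated incorrectly: you claim $a=\rho'_t=d+a_d$, but the correct relation is $a=\delta+a_d=d+t+a_d$, equivalently $a_d=a-\delta$. Your own criterion already gives this: the first $\delta$ rows of $\rho$ have length at least $\delta\geq t$, and among the remaining rows one has $\nu_i-r+\delta\geq t$ exactly when $\nu_i\geq r-d$, i.e.\ $i\leq a_d$ --- so there are $a_d$ such rows (not $a_d-t$), giving $a=\delta+a_d$. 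A check against the running example confirms this: for $\nu=(8,7,5,2,1)\subseteq R_{10}$, $d=2$, $t=1$, $\delta=3$, one has $a_d=\nu'_8=1$ and $\rho=(8,5,4,1,0)$ with $\rho'_1=4=\delta+a_d$, whereas $d+a_d=3$. (The paper obtains the same fact geometrically, from the alignment of the $t$-th column of $\rho$ with the $(r-\delta+t)$-th column of $\nu$, which yields $\rho'_t=\delta+\nu'_{r-\delta+t}=\delta+\nu'_{r-d}$.)

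This off-by-$t$ error is precisely the source of the ``discrepancy of $t$ full rows'' that derails your computation of $\eta_t^1$ and forces the retreat to the unexecuted transpose/bit-string sketch. With the correct value of $a$, everything closes with no leftover: $a-t=d+a_d$, so the block $t^{a-t}$ of $\rho^{L^t}$ becomes exactly $r^{d+a_d}$ after adding $(r-t)^{m-t}$, and the remaining rows $\rho_{a+1},\dots,\rho_m=\nu_{a_d+1}-r+\delta,\dots,\nu_{m-\delta}-r+\delta$ become $\nu_{a_d+1}+d,\dots,\nu_{m-\delta}+d$, matching the formula for $\eta_t^1$ in Lemma~\ref{lem:nuoplusd} on the nose. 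Note also that your displayed formula for $\rho^{R^t}$, which ends at $\nu_{a_d}-r+\delta-t$, tacitly uses $a-\delta=a_d$ and so contradicts your claimed value of $a$; likewise the length of $\eta_t^2$ is $\delta+a_d$, not $\delta+a_d-t$. So the missing ingredient is a correct proof of $\rho'_t=\delta+a_d$ (via the column-alignment picture or a direct count as above); once that is in place, your row-by-row comparison is exactly the paper's argument and the transpose detour is unnecessary.
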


\begin{proof}
For $\nu=(\nu_1, \dots, \nu_m) \subseteq R_r$ and 
$\delta=\diag_0(\nu^\vee)=\diag_0(\rho)$, Lemma~\ref{le:nicerho} gives that
\begin{equation}\label{eq:explicit-rho-pf}
 \rho=  (\nu_{m-\delta+1}+\delta,\dots,\nu_m+\delta,\nu_1-r+\delta,\dots,\nu_{m-\delta}-r+\delta)\,.
\end{equation}

Since $0 \leq t\leq \delta$ by Remark~\ref{re:diagdist},
we have that $a:=\rho_t'\geq \rho_{\delta}'$.
In turn, $\rho_{\delta}'\geq\delta$ since $(\delta^\delta)\subseteq \rho$.
In particular, since $a \geq \delta$, writing $\rho = (\rho_1, \dots, \rho_m)$ gives us
\begin{equation*}
(\rho_1, \dots, \rho_a) = (\nu_{m-\delta+1}+\delta,\dots,\nu_m+\delta,\nu_1-r+\delta,\dots,\nu_{a-\delta}-r+\delta).
\end{equation*}
We can thus write 
\begin{align}
  \rho^{R^t}&= ( \nu_{m-\delta+1}+\delta-t,\dots,\nu_m+\delta-t,\nu_1-r+\delta-t,\dots,\nu_{a-\delta}-r+\delta-t) \label{eq:eta2-rho-t} \\
& =  ( \nu_{m-\delta+1}+d,\dots,\nu_m+d,\nu_1+d-r,\dots,\nu_{a-\delta}+d-r),\nonumber
\end{align}
using that $\delta=d+t$ by Remark~\ref{re:diagdist}. 

 Having identified the first $a$ parts of $\rho$ to construct $\rho^{R^t}$ above, using \eqref{eq:explicit-rho-pf} we also have
\begin{equation*}
\rho^{L^t}   =
 (t^{a-t},\nu_{a-\delta+1}-r+\delta, \dots, \nu_{m-\delta}-r+\delta). 
\end{equation*}
Since $\rho^{L^t}$ has exactly $m-t$ parts by definition, we compute that
\begin{align}
\rho^{L^t}+(r-t)^{m-t}&= ( r^{a-t},\nu_{a-\delta+1}+\delta-t, \dots, \nu_{m-\delta}+\delta-t) \label{eq:eta1-rho-t} \\
& =  ( r^{a-\delta+d},\nu_{a-\delta+1}+d, \dots, \nu_{m-\delta}+d), \nonumber
\end{align}
where again we have used the relation $\delta=d+t$. 

\begin{figure}[!htb]
 \centering
\begin{tikzpicture}[x=.9pt,y=.9pt,yscale=-1,xscale=1]
\draw [ dotted ](182.5,209)-- (100,209)-- (100,133) -- (227.5,133) -- (227.5,209);
\draw[<->] (100, 212) -- node[pos=.5,below] {\Tiny{$r-\delta$}}  (182,212) ;
\draw[<->] (183.5, 263) -- node[pos=.5,below] {\Tiny{$t$}}  (211,263) ;
\draw[<->] (211, 263) -- node[pos=.5,below] {\Tiny{$d$}}  (227.5,263) ;
\draw[<->,densely dashed ](185, 182)  -- node[pos=.5,below,rotate=90]   {\Tiny{$\delta$}} (185,136);
\draw [<->,densely dashed] (185,135)  -- node [pos=.5, below ] {\Tiny{$\delta$}} (226.5,135); 
\draw [densely dotted]  (182.5,183) -- (182.5,133) ;
\draw  [densely dotted]  (182.5,183) --  (227.5,183) ;
\path [draw, fill=gray!20]   (100,147) --  (118.5,147) -- (118.5,163)  -- (153.5,163)  -- (153.5,173) -- (171.5,173)  -- (171.5,183)
 --   (182.5,183) -- (182.5,191) -- (213.5,191) -- (213.5,202)  -- (227.5,202) 
--(227.5,209) -- (182.5,209)--  (100,209) -- cycle ;
\path[draw]   (227.5,209) --  (227.5,223) -- (245.5,223) -- (245.5,239)   -- (280.5,239) --  (280.5,249) -- (300,249) -- (300,260) -- (227.5,260);
\path[draw, fill=gray!20]   (182.5,183) -- (182.5,191) -- (213.5,191) -- (213.5,202)  -- (227.5,202) 
--(227.5,209) -- (182.5,209) ;
\draw  [dash pattern={on 0.84pt off 2.51pt}]  (182.5,183) -- (182.5,209) ;
\draw  [dash pattern={on 0.84pt off 2.51pt}]  (182.5,260) -- (227.5,260) ;
\draw  [dash pattern={on 0.84pt off 2.51pt}]  (227.5,183) -- (300,183) ;
\draw  [dash pattern={on 0.84pt off 2.51pt}]  (300,183) -- (300,260) ;
\draw [densely dotted]  (182.5,209) --  (182.5,260) --  (227.5,260)  ;
\fill [pattern=north west lines]   (204,191) rectangle (211,260);
\node at (265,204) {\Tiny{$\rho'_t=\delta+\nu'_{r-\delta+t}$}};
\draw[->] (260,209) to [out=140,in=-10, looseness=1]  (211,234);
\draw[<->,densely dashed ](185, 259)  -- node[pos=.5,below,rotate=90]   {\Tiny{$\delta$}} (185,211);
\node at (126,198) {\tiny{$\nu$}};
\node at (219,250) {\tiny{$\rho$}};
\end{tikzpicture}
 \caption{The columns of $\nu\subseteq R_r$ and its cycled image $\rho = \cyc^r(\nu)$.}
\label{fig:rho-nu-a}
\end{figure}

 Comparing \eqref{eq:eta1-rho-t} and  \eqref{eq:eta2-rho-t} to the respective formulas for $\eta_t^1$ and $\eta_t^2$ in Lemma \ref{lem:nuoplusd}, we now see that \eqref{eq:etaRL} directly follows, provided that $a_d:=\nu_{r-d}'=a-\delta$. 
 Figure \ref{fig:rho-nu-a} illustrates the relationship between the columns of $\nu$ and $\rho$. In particular, note that   the  $t$-th column of $\rho$ and the  $(r-\delta+t)$-th column of $\nu$ are aligned.  
Thus by comparing their heights, we obtain $\rho_{t}'=\delta + \nu'_{r-\delta+t}$.
Therefore,  using $\delta=d+t$, we have $a_d = \nu'_{r-d}=\nu_{r-\delta+t}'=\rho'_t-\delta=a-\delta$.
\end{proof}

We continue Example \ref{ex:rho} below to illustrate the statement of Proposition \ref{prop:rho-nu}.

\begin{example}\label{ex:etaRLlemma}
 Recall that in our running example we have $n=15$ and $\nu=(8,7,5,2,1)\subset R_{10}$, in which case $\rho=\cyc^{10}(\nu)=(8,5,4,1,0)$. Comparing Examples \ref{ex:nuoplusd} and \ref{ex:rho}, we see that when $t=3$, equivalently $d=0$, we have
 \[(\eta_3^1, \eta_3^2 ) = ((8,7),(5,2,1)) =  ((1,0)+(7,7),(5,2,1))= (\rho^{L^3}+(7,7), \rho^{R^3}).\]
In the case $t=1$, equivalently $d=2$, again from Examples \ref{ex:nuoplusd} and \ref{ex:rho} we have
\[(\eta_1^1, \eta_1^2 ) = ((10,10,10,9),(7,4,3,0)) = (\rho^{L^1}+(9,9,9,9), \rho^{R^1}),\]
demonstrating the statement of Proposition \ref{prop:rho-nu} for two different values of $t$, equivalently $d$.
\end{example}

\subsection{Cycling and pairs of permutations}\label{sec:perms}

 Our final step in the proof of the Main Theorem is to then express the third permutation in the correspondence $\LJFl$ from Theorem \ref{thm:LJFlcomb}  in a form which more naturally relates to the Peterson comparison $\PC$.

\begin{prop}\label{prop:wP'eta}
Let $\nu \subseteq R_r$ and $\rho=\cyc^r(\nu)$. Let $d \in \ZZ_{\geq 0}$, and write $\nu \oplus d=(\eta_t^1,\eta_t^2)$  with  $t=\diag_0((\nu \oplus d)^{\vee_r})$. Then
\[  \varphi_{r-t}(\eta_t^2)\varphi_{r+t}(\eta_t^1) = w_{\rho} w_0^{P_t'}w_0,
\]
where $w_0^{P'_t}$ is the permutation defined in \eqref{eq:w0P'} with $d=t$.
\end{prop}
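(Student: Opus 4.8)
The plan is to unwind all three permutations into one-line notation and verify the identity entrywise. First I would compute $\varphi_{r+t}(\eta^1_t)$ and $\varphi_{r-t}(\eta^2_t)$ explicitly using Lemma~\ref{le:comtra}, which gives the one-line notation of $w_{(\lambda^{\vee})'}$ directly in terms of the parts of a partition $\lambda$ and its transpose. Applying Proposition~\ref{prop:rho-nu}, I can write $\eta^1_t = \rho^{L^t} + (r-t)^{m-t}$ and $\eta^2_t = \rho^{R^t}$, so the parts of $\eta^1_t$ and $\eta^2_t$ are expressed via the parts and columns of $\rho$. Using the explicit description of $\rho$ from Lemma~\ref{le:nicerho} (in terms of $\nu$), or more cleanly, keeping everything in terms of $\rho$ and $a = \rho'_t$, I would obtain the window for $\varphi_{r-t}(\eta^2_t)\,\varphi_{r+t}(\eta^1_t)$. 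The key structural fact, already noted in the proof of Theorem~\ref{thm:LJFlcomb}, is that $\varphi_{r+t}(\eta^1_t)$ fixes $1,\dots,r-t$ pointwise (since $\eta^1_t \subseteq R_{r+t}$ has at least $r-t$ full columns of height $m-t$), so the product $w^2w^1$ is a genuine two-descent permutation with descent set $\{r-t, r+t\}$, and its window splits into three blocks of sizes $r-t$, $2t$, and $m-t$.

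Next I would compute the right-hand side $w_\rho\, w_0^{P'_t} w_0$ directly. Recall $w_0 = [n\cdots 1]$, and from \eqref{eq:w0P'} with $d=t$ (reindexing appropriately since here the ``$m$'' of \eqref{eq:w0P'} is played by $r$, because $\rho \subseteq R_r$ corresponds to a Grassmann permutation with descent at $r$):
\[
w_0^{P'_t} = [\, r-t \cdots 1 \mid r \cdots r-t+1 \mid r+t \cdots r+1 \mid n \cdots r+t+1 \,].
\]
Right-multiplying $w_\rho$ by $w_0^{P'_t} w_0$ permutes the positions of the window of $w_\rho$ according to the blocks, and I would check that the resulting window agrees block-by-block with the window of $w^2 w^1$ computed above. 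Concretely, $w_\rho$ has window $[\rho_m + 1, \dots, \rho_1 + r \mid \cdots]$ in its first $r$ entries (by \eqref{eq:lampermalt}/\eqref{eq:lampermdef}), and the remaining $m$ entries are the complementary values in increasing order; precomposing with $w_0^{P'_t}w_0$ rearranges these into the pattern $[\text{last } m\text{-block}\mid \text{reversed middle } 2t\text{-block}\mid \cdots]$, and I would match this against the explicit one-line notations of $\varphi_{r-t}(\eta^2_t)$ and $\varphi_{r+t}(\eta^1_t)$ coming from Lemma~\ref{le:comtra}.

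The bookkeeping is the main obstacle: there are three index ranges ($1 \le i \le r-t$, $r-t < i \le r+t$, $r+t < i \le n$) and in each one must confront whether an entry comes from the ``large'' part $\nu_j - r + \delta$ or the ``small'' part $\nu_j + \delta$ of $\rho$, and how the complement within $R_{r\pm t}$ interacts with the complement within $R_r$. The cleanest route is probably to argue purely in terms of $\rho$ and the integer $a = \rho'_t = \delta + \nu'_{r-\delta+t}$: the partition $\rho^{R^t} \subseteq R_{r-t}$ has $a$ (nonzero-or-padded) rows with parts $\rho_1 - t, \dots, \rho_a - t$, and $\rho^{L^t} \subseteq R_{r+t}$ consists of the columns of $\rho$ above the embedded $(t^t)$. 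Applying Lemma~\ref{le:comtra} to each and using that the first $r-t$ window entries of $\varphi_{r+t}(\eta^1_t)$ are the identity, the product's window is forced, and it then only remains to observe that conjugating/right-multiplying $w_\rho$ by $w_0^{P'_t}w_0$ produces exactly this rearrangement. I would also double-check the degenerate cases $t = 0$ (where $w_0^{P'_t}w_0$ is the long cycle reversing blocks of size $r$ and $m$, and the claim reduces to $\varphi_r(\eta^2_0)\varphi_r(\eta^1_0) = w_\rho w_0^P w_0$, consistent with \eqref{eq:dual-perm}) and $t = \delta$ (where one of $\rho^{L^t}$, $\rho^{R^t}$ may be empty), since Proposition~\ref{T:WoodwardGr} and Lemma~\ref{lem:nuoplusd} already handle trivial blocks by convention.
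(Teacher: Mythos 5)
Your overall route is the same as the paper's --- compute $\varphi_{r-t}(\eta^2_t)$ and $\varphi_{r+t}(\eta^1_t)$ in one-line notation via Lemma~\ref{le:comtra} and Proposition~\ref{prop:rho-nu}, use that $\varphi_{r+t}(\eta^1_t)$ fixes $1,\dots,r-t$ so the product has descent set $\{r-t,r+t\}$, and match blocks against $w_\rho w_0^{P'_t}w_0$ --- but your identification of $w_0^{P'_t}$ is wrong, and this is not a harmless reindexing. You assert that $\rho\subseteq R_r$ corresponds to a Grassmann permutation with descent at $r$ and accordingly replace $m$ by $r$ in \eqref{eq:w0P'}. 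Since $R_r=(r^m)$ has $m$ rows, the bijection \eqref{eq:lampermdef} puts $w_\rho$ in $S_n^m$, with its descent at $m$; and the proposition (as well as its use in the proof of the Main Theorem, where $w_0^{P'_t}$ comes from $\PC$ for $\Gr_{m,n}$) takes $w_0^{P'_t}$ exactly as in \eqref{eq:w0P'}, with blocks at positions $m-t$, $m$, $m+t$. The $m$-versus-$r$ asymmetry is precisely what the factor $w_0^{P'_t}w_0$ is for: the window of $w_\rho$ splits into bins of sizes $m-t,\,t,\,t,\,r-t$, and right multiplication by $w_0^{P'_t}w_0$ reverses those bins, producing a window with blocks of sizes $r-t,\,t,\,t,\,m-t$, the only shape that can agree with $\varphi_{r-t}(\eta^2_t)\varphi_{r+t}(\eta^1_t)$, whose descents lie at $r\pm t$. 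With your $r$-indexed permutation the claimed identity is simply false whenever $m\neq r$: in Example~\ref{Ex:wP'bitlemma} (where $n=15$, $m=5$, $r=10$, $t=3$) your right-hand side has window beginning $[14,15,10,11,12,\dots]$, whereas $\varphi_{7}(\eta^2_3)\varphi_{13}(\eta^1_3)$ begins $[6,8,10,11,12,14,15,\dots]$.

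A secondary point: even with the correct $w_0^{P'_t}$, your write-up defers exactly the step that carries the content. Matching the outer blocks is quick once Lemma~\ref{le:comtra} is applied to $\eta^2_t=\rho^{R^t}\subseteq R_{r-t}$ and $\eta^1_t=\rho^{L^t}+(r-t)^{m-t}\subseteq R_{r+t}$ (the first $r-t$ entries of the product are the last bin of $w_\rho$, and positions $r+1,\dots,r+t$ receive its second bin), but one must still verify that positions $r-t+1,\dots,r$ are filled by precisely the third bin of $w_\rho$ --- in the paper this is the explicit tracking $r-t+i\mapsto w_{m+i}+(r-t)\mapsto w_{m+i}$ through the two windows --- and then conclude that the remaining block is the first bin by combining the descent set $\{r-t,r+t\}$ from Lemma~\ref{le:constructfactors} with the fact that the bins partition $[n]$. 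That bookkeeping is where the proof actually lives, so it needs to be carried out rather than described.
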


In the proof of the Main Theorem, Proposition \ref{prop:wP'eta} is the most important result from this section.  
We first demonstrate the statement of Proposition \ref{prop:wP'eta} for a pair of partitions arising from Example \ref{ex:etaRLlemma} , since this verification also illustrates the method of proof.

\begin{example}\label{Ex:wP'bitlemma}
Let $n=15$, and consider the partition $\rho=(8,5,4,1,0) \subset R_{10}$ with $t=3$. 
We illustrate Proposition \ref{prop:wP'eta} in the same manner in which the proof of the general case proceeds. 

Applying Theorem \ref{thm:comparison} with $d=t=3$, we have
$$w_0^{P_3'} = [ 2, 1\ \textcolor{red}{|}\ 5,4,3\ \textcolor{blue}{|}\ 8,7,6\ \textcolor{red}{|}\ 15, 14, 13, 12, 11, 10, 9],$$
where we have marked position $m=5$ in blue and positions $m\pm t = 5\pm 3$ in red.  
Right multiplication by $w_0$ then records the one-line notation for $w_0^{P_3'}$ in reverse order:
$$w_0^{P'_3}w_0 = [ 9,10,11,12,13,14,15\ \textcolor{red}{|}\ 6,7,8\ \textcolor{blue}{|}\ 3,4,5\ \textcolor{red}{|}\ 1,2].$$

Using the bijection between partitions and Grassmannian permutations from \eqref{eq:lampermdef}, we have 
$$w_\rho = [1,3\ \textcolor{red}{|}\ 7,9,13\ \textcolor{blue}{|}\ 2,4,5\ \textcolor{red}{|}\ 6,8,10,11,12,14,15] \in S^5_{15}.$$
Note here that we also artificially view $w_\rho$ as having four ``bins'', created by the same separations in positions $m$ and $m\pm 3$. The effect of right multiplying $w_\rho$ by the permutation $w_0^{P'_3}w_0$ is thus simply to reverse the ordering of these four bins in $w_\rho$; that is, 
$$w_\rho w_0^{P'_3} w_0=[6,8,10,11,12,14,15\ \textcolor{red}{|}\ 2,4,5\ \textcolor{blue}{|}\ 7,9,13\ \textcolor{red}{|}\ 1,3],$$
where the four bins in this product now occur in positions $r$ and $r\pm t$. 

To compare this resulting product to the right-hand side, recall from Example \ref{ex:etaRLlemma} that $\eta_3^1= (8,7)$ and $ \eta_3^2= (5,2,1)$. Then applying  $\varphi_{10\pm 3}$ as in \eqref{eq:thetaGrass}
gives the following correspondences between permutations and partitions: $$
\varphi_{10-3}(\eta_3^2) \longleftrightarrow \left((\eta_3^2)^{\vee_{7}}\right)' =
{\huge \tableau[pcY]{
,,,,|,,,,,|,,,,,,|,,,,,,|,,,,,,|,,,,,,,|,,,,,,, }} = (8,8,7,7,7,6,5) \, ,$$
$$\varphi_{10+3}(\eta_3^1) \longleftrightarrow \left((\eta_3^1)^{\vee_{13}}\right)' =
{\huge \tableau[pcY]{
|,|,|,|,|,}} = (2,2,2,2,2,1,0^7)\, .
$$
From \eqref{eq:lampermdef}, the one-line notation of these permutations can be written as
 \begin{align*}
 \varphi_{7}(\eta_3^2) &= [6,8,10,11,12,14,15\ \textcolor{kellygreen}{|}\ 1,2,3,4,5\ \textcolor{kellygreen}{|}\ 7,9,13]\\
\varphi_{13}(\eta_3^1) &=[1,2,3,4,5,6,7\ \textcolor{red}{|}\ 9,11,12\ \textcolor{blue}{|}\ 13,14,15\ \textcolor{red}{|}\ 8,10] \, ,
\end{align*}
where we have again artificially separated each permutation into several ``bins''. 
Composing these two permutations then gives
\begin{align*}
 \varphi_{7}(\eta_3^2)\varphi_{13}(\eta_3^1)& =[6,8,10,11,12,14,15\ \textcolor{red}{|}\ 2,4,5\ \textcolor{blue}{|}\ 7,9,13\ \textcolor{red}{|}\ 1,3] = w_\rho w_0^{P'_3} w_0,
\end{align*}
confirming Proposition \ref{prop:wP'eta} in this example.

\end{example}

The proof of Proposition \ref{prop:wP'eta} follows the same approach as the calculations in Example \ref{Ex:wP'bitlemma}.

\begin{proof}[Proof of Proposition \ref{prop:wP'eta}]

Write the permutation $w_\rho = [w_1 \cdots  w_n]$ in one-line notation, where recall that $\rho=(\rho_1,\dots, \rho_m) \subseteq R_r$, so that $w_\rho \in \mGrass$.  We will start by dividing $w_\rho$ into four ``bins,'' which we denote by $B_1, B_2, B_3, B_4$, created by placing a separation at the location of the descent $m$, as well as two artificial separations at $m\pm t$ as follows:
$$w_\rho = [w_1\ \cdots \ w_{m-t} \ \textcolor{red}{|}\  w_{m-t+1} \ \cdots \ w_{m} \ \textcolor{blue}{|}\  w_{m+1} \ \cdots \ w_{m+t} \ \textcolor{red}{|}\  w_{m+t+1}\ \cdots \ w_n] =: [B_1\ B_2\ B_3\ B_4].$$
In some degenerate cases the number of distinct bins could be strictly less than four, of course, but we provide the details for the generic case in which there are four bins since the other cases follow by the same argument.

We now calculate the permutation $w_\rho w_0^{P'_t}w_0$.  Recall from Theorem \ref{thm:comparison} with $d=t$ that
$$w_0^{P_t'} = [ m-t \cdots 1\ \textcolor{red}{|}\ m \cdots m-t+1\ \textcolor{blue}{|}\ m+t \cdots m+1\ \textcolor{red}{|}\ n \cdots m+t+1],$$
where the separations occur in positions $m$ and $m\pm t$ as above.
The effect of right multiplication by $w_0$ is to record the one-line notation for $w_0^{P_t'}$ in reverse order, and so
$$w_0^{P_t'}w_0 = [ m+t+1 \cdots n\ \textcolor{red}{|}\ m+1 \cdots m+t\ \textcolor{blue}{|}\ m-t+1 \cdots m\ \textcolor{red}{|}\ 1 \cdots m-t],$$
where now the separations occur in positions $r$ and $r\pm t$.
The composition $w_\rho w_0^{P'_t}w_0$ thus simply reverses the ordering of the four bins in $w_\rho$, meaning that
$$w_\rho w_0^{P'}w_0 = [B_4\ B_3\ B_2\ B_1].$$

It thus suffices to prove that the one-line notation 
for 
$$ \varphi_{r-t}(\eta_t^2)\varphi_{r+t}(\eta_t^1) =  [B_4\ B_3\ B_2\ B_1]\,.
$$   
To this end, since $\phi^r(\nu)=\rho=(\rho_1,\ldots,\rho_m)\subseteq R_r$, 
we use Lemma \ref{le:comtra} to express the bins in terms of $\rho$ with
$w_\rho= [\rho_{m}+1 \cdots \rho_1+m \mid m+1-\rho_1' \cdots n-\rho_{r}'] = [B_1 \,B_2 \,B_3 \,B_4]$.
In particular,
\begin{align*}
B_1&=[\rho_m+1 \cdots \rho_{t+1}+m-t], 
&B_2&= [\rho_t+m-t+1 \cdots \rho_1+m], \\
B_3&=[ m+1-\rho_1' \cdots m+t-\rho_t'], 
&B_4&=[m+t+1-\rho_{t+1}' \cdots n-\rho_r'].
\end{align*}

To determine the product $\varphi_{r+t}(\eta_t^2) \varphi_{r+t}(\eta_t^1) $,  we first study
the two permutations separately.
Recall that $\varphi_{r-t}(\eta_t^2)$ is the permutation corresponding to $\left((\eta_t^2)^{\vee_{r-t}}\right)'$.
For the pair of permutations  $(\rho^L,\rho^R):=(\rho^{L^t},\rho^{R^t})$ described by Definition \ref{def:rho},  viewing $\eta_t^2\subseteq R_{r-t}$, by  Proposition \ref{prop:rho-nu}, we have
\begin{align*}
\eta_t^2&=\rho^{R}=(\rho_1-t,\ldots,\rho_a-t,0^{m-a+t})\subseteq R_{r-t},\\
(\eta_t^2)'&=(\rho^R)' =(\rho'_{t+1},\ldots,\rho_r')\subseteq R_{m+t},
\end{align*}
where $a:= \rho_t'$. Therefore, 
Lemma~\ref{le:comtra} applied to $\eta_t^2$ with $j=m+t$ gives the description
$$
\varphi_{r-t}(\eta_t^2) = [m+t+1-\rho'_{t+1} \cdots  n-\rho_{r}' \mid 1 \cdots m-a+t\mid \rho_a+m-a+1  \cdots \rho_1+m],$$
or equivalently, since $(t^t)\subseteq \rho$ implies that $a=\rho'_t\geq t$, 
\begin{equation}\label{eq:wb1}
[B_4\mid 1\cdots  m-a+t\mid \cdots \mid B_2]\, .
\end{equation}

A similar study applies to $\varphi_{r+t}(\eta_t^1)$, the permutation corresponding to $\left((\eta_t^1)^{\vee_{r-t}}\right)'$. By Proposition \ref{prop:rho-nu} and Definition \ref{def:rho},  we have
\begin{align*}
\eta_t^1&=\rho^{L^t}+(r-t)^{m-t} =  (r^{a-t},r-t+\rho_{a+1},\ldots,r-t+\rho_m)\subseteq R_{r+t},\\
(\eta_t^1)'&=(\rho^{L^t}+(r-t)^{m-t})' =((m-t)^{r-t},\rho_1'-t,\ldots,\rho_t'-t,0^t)\subseteq R_{m-t}.
\end{align*}
Therefore, by Lemma~\ref{le:comtra} applied to $\eta_t^1$ with  $j=m-t$, we have
\begin{align}
\varphi_{r+t}(\eta_t^1)&= [1 \cdots r-t\mid n-t+1-\rho_1' \cdots n-\rho_t'\mid
n-t+1 \cdots n\mid \cdots]  \nonumber \\
&= [1 \cdots r-t \mid B_3 +(r-t) \mid n-t+1 \cdots n \mid \cdots],
\label{eq:wb2}
\end{align}
where $B_3+(r-t)$ denotes the string obtained by adding $r-t$ to each entry of $B_3$.

We can see immediately from \eqref{eq:wb1} and \eqref{eq:wb2} that 
 \begin{equation}\label{eq:varphiproduct}
 \varphi_{r-t}(\eta_t^2)\varphi_{r+t}(\eta_t^1)= [ B_4\mid \cdots \mid B_2 \mid \cdots]\,.
 \end{equation}
  To compute the values between $B_4$ and $B_2$ in the product, we must identify the image of $\{r-t+1, \dots, r\}$ under the composition $\varphi_{r-t}(\eta_t^2)\varphi_{r+t}(\eta_t^1)$.  
Since the entries of $B_3$ are increasing with maximum entry $w_{m+t}=m+t-\rho_t'=m+t-a$,  the values $w_{m+j}$ of bin $B_3$ satisfy $1 \leq w_{m+j} \leq m-a+t$, for $1 \leq j \leq t$.
Now considering the image of $r-t+i$ for $1 \leq i \leq t$ under $\varphi_{r+t}(\eta_t^1)$, we have $\varphi_{r+t}(\eta_t^1)(r-t+i)= w_{m+i}+(r-t)$ by \eqref{eq:wb2}, and in particular, 
\[1+r-t \leq \varphi_{r+t}(\eta_t^1)(r-t+i) \leq (m-a+t)+(r-t)=n-a\] 
for all $1 \leq i \leq t.$ 
From \eqref{eq:wb1}, note that $\varphi_{r-t}(\eta_t^2)(j) = j-(r-t)$ for all $r-t+1 \leq j \leq n-a$, since $B_4$ has $r-t$ entries.  We thus obtain, for any $1 \leq i \leq t$, 
\[ \varphi_{r-t}(\eta_t^2)\varphi_{r+t}(\eta_t^1)(r-t+i) = \left(w_{m+i}+(r-t)\right) -(r-t) = w_{m+i}.\]
In other words, the entries in the product \eqref{eq:varphiproduct} which occur between $B_4$ and $B_2$ are given precisely by $B_3$; that is, $\varphi_{r-t}(\eta_t^2)\varphi_{r+t}(\eta_t^1) = [B_4 \, B_3 \, B_2 \, \cdots]$.

Finally, since  $\varphi_{r+t}(\eta_t^1) \in S^{r+t}_m$ and $\varphi_{r-t}(\eta_t^2) \in S^{r-t}_m$ and $\varphi_{r+t}(\eta_t^1)(i) = i$ for all $i \in [r-t]$,  then Lemma \ref{le:constructfactors} says that $D(\varphi_{r-t}(\eta_t^2)\varphi_{r+t}(\eta_t^1)) = \{r-t,r+t\}$. Note, however, that $D\left( [B_4 \, B_3 \, B_2 \, \cdots] \right)\subseteq \{r\pm t\}$.  
Since $[n]$ is the disjoint union of the values in $B_1, B_2, B_3, B_4$, the remaining entries of $\varphi_{r-t}(\eta_t^2)\varphi_{r+t}(\eta_t^1)$ necessarily consist of precisely the entries of the ordered sequence $B_1$.  Therefore,  $\varphi_{r-t}(\eta_t^2)\varphi_{r+t}(\eta_t^1) = [B_4\  B_3\  B_2\  B_1] = w_\rho w_0^{P'_t}w_0$, as required.
\end{proof}

Before proceeding to the proof of the Main Theorem in the final subsection, we conclude by illustrating the  key propositions from this section, as they apply to our more complete running example from Section \ref{sec:background}.

\begin{example} 
From Example \ref{ex:LJFlcombfav}, with $n=5$ and $r=2$ so that $m=3$, we have $\nu=(2,0,0) \subset R_2$, and $d=1$. 
Further recall that $t=1$ and $\eta^1=(2,2) \subset R_{2+1}$ and $\eta^2=(1,1,1) \subset R_{2-1}$, and from Example \ref{ex:LJFlcombfav}, we have $\varphi_{2-1}(\eta^2)=[2 1 3 4 5]$ and $\varphi_{2+1}(\eta^1)= [1 2 5 3 4]$.
Now define $\rho=\cyc^2(\nu)=(2,2,2)$ so that $\rho^{L^1}=(1,1)$ and $\rho^{R^1}=(1,1,1)$ by Definition \ref{def:rho}.
We thus see that $\eta^1=\rho^{L^1}+(1,1)$ and $\eta^2=\rho^{R^1}$, illustrating Proposition \ref{prop:rho-nu}. 
Finally, $w_0^{P'_1}=[2 1 3 4 5]$ so that $w^{P'_1}_0w_0=[5 4 3 1 2]$.  Since $w_\rho = [34512]$, we then see that
\[ \varphi_{2-1}(\eta^2)\varphi_{2+1}(\eta^1) = [2 1 5 3 4] =w_\rho w_0^{P'_1}w_0\, , \]
demonstrating Proposition \ref{prop:wP'eta}, the crucial step equating the third permutations indexing the output Littlewood-Richardson coefficients from Example \ref{ex:FlTfav} and Example \ref{ex:LJFlcombfav}.
\end{example}

\subsection{Proof of the Main Theorem}\label{sec:mainproof}

The proof of the Main Theorem now follows by applying each of the five comparisons in the order suggested by the diagram in the statement, plus Proposition \ref{prop:wP'eta} and two direct applications of the correspondence \eqref{eq:thetaGrass}.

\begin{proof}[Proof of the Main Theorem]
Given any partitions $\lambda, \mu, \nu \subseteq R_r$ and any integer $d \in \ZZ_{\geq0}$ such that $|\lambda| + |\mu| = |\nu| + nd$, combining the quantum-to-affine correspondences from Theorem \ref{thm:LucJenGr} and Theorem \ref{thm:LJFlcomb}, we have
\begin{equation}\label{eq:LJGrLJFl}
c_{\lambda,\mu}^{\nu,d}\xlongequal{\LJGr}
C_{\lambda, \mu}^{\nu\oplus d, (k)}
\xlongequal{\LJFl} c_{\varphi_r(\lambda),\varphi_r(\mu)}^{\varphi_{r-t}(\eta^2)\varphi_{r+t}(\eta^1),\t}
\end{equation}
where $t = \diag_0((\nu\oplus d)^{\vee})$ and $\t=(0^{r-t},1,\ldots,t-1,t,t-1,\ldots,1,0^{m-t})$.

On the other hand, combining strange duality from Theorem \ref{thm:strangeduality} and the Peterson comparison formula from  Theorem \ref{thm:comparison}, we have
\[ c_{\lambda, \mu}^{\nu, d} \xlongequal{\SD}
c_{\lambda^\vee,{\mu^\vee}}^{\cyc^r(\nu)^\vee,t}
\xlongequal{\PC} c_{w_{\lambda^\vee},w_{\mu^\vee}}^{w_{\cyc^r(\nu)^\vee} w_0^P w_0^{P_t'},\t'}
\] where $t=\diag_0(\nu^\vee)-d=\diag_0((\nu\oplus d)^{\vee})$ and $\t' = (0^{m-t}, 1, 2, \dots, t-1, t, t-1, \dots, 2, 1, 0^{r-t})$. Note in particular that $t'_i=t_{n-i}$ for all $i \in [k]$. Applying the flag transpose $\FlT$ as in  \eqref{eq:FlT}, we thus have
\begin{equation}\label{eq:SDPCT}
\FlT \circ \PC \circ \SD \left( c_{\lambda, \mu}^{\nu, d}\right) = c_{w'_{\lambda^\vee},w'_{\mu^\vee}}^{w_0w_{\cyc^r(\nu)^\vee} w_0^P w_0^{P_t'}w_0,\t} = c_{w'_{\lambda^\vee},w'_{\mu^\vee}}^{ w_{\cyc^r(\nu)}w_0^{P_t'}w_0,\t},
\end{equation}
where we have used the fact that $w_{\cyc^r(\nu)^\vee} =w_0w_{\cyc^r(\nu)}w_0^P$ to simplify the third permutation.

Comparing the resulting quantum Littlewood-Richardson coefficients for $\QH^*(\Fl_n)$ in Equations \eqref{eq:LJGrLJFl} and \eqref{eq:SDPCT}, the first two permutations are equal using the correspondence in \eqref{eq:thetaGrass}, which says that $\varphi_r(\lambda) =w'_{\lambda^\vee}$ and $\varphi_r(\mu) =w'_{\mu^\vee}$.  The  Main Theorem thus follows immediately by applying Proposition \ref{prop:wP'eta} to the third permutations, since $\rho = \cyc^r(\nu)$.
\end{proof}

\renewcommand{\refname}{References}
\bibliography{LizLindaJenniferRefs}
\bibliographystyle{alpha}

\end{document}